\newtheorem{them}{Theorem}[section]
\newtheorem{prop}[them]{Proposition}
\newtheorem{defi}{Definition}[section]
\newtheorem{rem}{Remark}[section]
\newcommand{\mcal}[1]{\mathcal{#1}}
\newcommand{\ds}{\displaystyle}
\definecolor{vertf}{rgb}{0,0.55,0.1}
\definecolor{bclairf}{rgb}{0.40,0.65,0.89}
\definecolor{or}{rgb}{0.98,0.6,.1}
\newcommand{\eps}{\varepsilon}
\def\R{{\mathbb R}}
\def\Z{{\mathbb Z}}
\def\xN{{\mathbb N}}
\def\xL{{\rm L}} 
\newcommand{\dv}{\partial}
\newcommand{\dd}{{\rm d}}
\DeclareMathOperator*{\argmin}{arg\,min}
\newcommand{\dx}{\,{\rm d}x}
\newcommand{\dt}{\,{\rm d}t}
\newcommand{\ie}{\emph{i.e.}}
\newcommand{\T}{\mathcal{T}}
\newcommand{\PP}{\mathcal{P}}
\newcommand{\M}{\mathcal{M}}
\newcommand{\E}{\mathcal{E}}
\newcommand{\U}{\mathcal{U}}
\newcommand{\tW}{\widetilde{\mathcal{W}}}
\newcommand{\udd}{u^{\sharp}}
\newcommand{\tdd}{\tau^{\sharp}}
\newcommand{\pidd}{\pi^{\sharp}}
\newcommand{\Mdd}{\mathcal{M}^{\sharp}}
\newcommand{\Pdd}{\mathcal{P}^{\sharp}}
\newcommand{\Udd}{U^{\sharp}}
\newcommand{\Me}{\mathcal{M}^{*}}
\newcommand{\A}{(\textbf{A})}
\newcommand{\B}{(\textbf{A})}
\newcommand{\Bu}{(\textbf{A1})}
\newcommand{\Bd}{(\textbf{A2})}
\newcommand{\Bt}{(\textbf{A3})}
\newcommand{\C}{(\textbf{B})}
\newcommand{\Fcal}{\mathcal{F}}
\newcommand{\Ccal}{\mathcal{C}}
\newcommand{\Fbb}{\mathbb{F}}
\newcommand{\Cbb}{\mathbb{C}}
\def\ie\,{\textit{i.e.}\,}
\newcommand{\vect}[1]{\mathbb{#1}}
\def\unde{\lbrace 1,2 \rbrace}
\def\eos{e.o.s.}
\def\Ind{\mathds{1}}
\date{}
\title{\textbf{A Positive and Entropy-Satisfying Finite Volume Scheme for the Baer-Nunziato Model}}
\author{Fr{\'e}d{\'e}ric Coquel$^{1}$, Jean-Marc H\'erard$^{2}$, Khaled Saleh$^{3}$}
\begin{document}

\maketitle

{\centering
${}^{1}$ CMAP, {\'E}cole Polytechnique CNRS, UMR 7641, Route de Saclay, F-91128 Palaiseau Cedex.\\
${}^{2}$ EDF-R\&D, D{\'e}partement MFEE, 6 Quai Watier, F-78401 Chatou Cedex, France.\\
${}^{3}$Universit\'e de Lyon, CNRS UMR 5208, Universit\'e Lyon 1, Institut Camille Jordan, 43 bd 11 novembre 1918; F-69622 Villeurbanne cedex, France.\\
% ${}^{2}$ UPMC Univ Paris 06, UMR 7598, Laboratoire Jacques-Louis
% Lions, F-75005, Paris, France. \\
% ${}^{3}$ CNRS, UMR 7598, Laboratoire Jacques-Louis Lions, F-75005, Paris,
% France. \\
}

\abstract{
We present a relaxation scheme for approximating the entropy dissipating weak solutions of the Baer-Nunziato two-phase flow model. 
This relaxation scheme is straightforwardly obtained as an extension of the relaxation scheme designed in \cite{CHSS} for the isentropic Baer-Nunziato model and consequently inherits its main properties.
To our knowledge, this is the only existing scheme for which the approximated phase fractions, phase densities and phase internal energies are proven to remain positive without any restrictive condition other than a classical fully computable CFL condition. For ideal gas and stiffened gas equations of state, real values of the phasic speeds of sound are also proven to be maintained by the numerical scheme.
It is also the only scheme for which a discrete entropy inequality is proven, under a CFL condition derived from the natural sub-characteristic condition associated with the relaxation approximation. This last property, which ensures the non-linear stability of the numerical method, is satisfied for any admissible equation of state.
We provide a numerical study for the convergence of the approximate solutions towards some exact Riemann solutions. 
The numerical simulations show that the relaxation scheme compares well with two of the most popular existing schemes available for the Baer-Nunziato model, namely Schwendeman-Wahle-Kapila's Godunov-type scheme \cite{SWK} and Toro-Tokareva's HLLC scheme \cite{TT}. The relaxation scheme also shows a higher precision and a lower computational cost (for comparable accuracy) than a standard numerical scheme used in the nuclear industry, namely Rusanov's scheme. Finally, we assess the good behavior of the scheme when approximating vanishing phase solutions. 
}

\normalsize
\medskip
\noindent \textbf{Key-words\,:} Compressible multi-phase flows, Hyperbolic PDEs, Energy-entropy duality, Entropy-satisfying methods, Relaxation techniques, Riemann problem, Riemann solvers, Finite volumes.\\

\noindent \textbf{AMS subject classifications\,:} 76T10, 65M08, 35L60, 35F55.

%%%%%%%%%%%%%%%%%%%%%%%%%%%%%%%%%%%%%%%%%%%%%%%%%%%%%%%%%%%%%%%%%%%%%%%%%%%%%%%
%%%%%%%%%%%%%%%%%%%%%%%%%%%%%%%%%%%%%%%%%%%%%%%%%%%%%%%%%%%%%%%%%%%%%%%%%%%%%%%
%%%% PRESENTATION DU MODEL   %%%%%%%%%%%%%%%%%%%%%%%%%%%%%%%%%%%%%%%%%%%%%%%%%%
%%%%%%%%%%%%%%%%%%%%%%%%%%%%%%%%%%%%%%%%%%%%%%%%%%%%%%%%%%%%%%%%%%%%%%%%%%%%%%%
%%%%%%%%%%%%%%%%%%%%%%%%%%%%%%%%%%%%%%%%%%%%%%%%%%%%%%%%%%%%%%%%%%%%%%%%%%%%%%%

% {\footnotesize
% \tableofcontents
% }

\section{Introduction}

The modeling and numerical simulation of two-phase flows is a relevant
approach for a detailed investigation of some patterns occurring in
water-vapor flows such as those encountered in nuclear power
plants. The targeted applications are the normal operating mode
of pressurized water reactors as well as incidental configurations
such as the Departure from Nucleate Boiling (DNB) \cite{dnb}, the Loss
of Coolant Accident (LOCA) \cite{loca}, the re-flooding phase
following a LOCA or the Reactivity Initiated Accident (RIA) \cite{ria}. In the normal operating mode, the flow in the primary circuit is quasi monophasic as there is \textit{a priori} no
vapor in the fluid. In the incidental configurations however, the
vapor statistical fraction may take values ranging from zero to nearly
one if some areas of the fluid have reached the boiling point. The
modeling as well as the numerical simulation of such phenomena remains
challenging since both models that can handle phase transitions and
robust numerical schemes are needed. 
While in the normal operating mode of pressurized water reactors, homogeneous models assuming thermodynamical equilibrium between the phases are used (in practice, only the liquid phase is present), the simulation of incidental configurations requires more detailed two-phase flow models accounting for thermodynamical disequilibrium. Naturally, as opposed to the numerical approximation of homogeneous models, explicit 
schemes are needed for the simulation of these potentially highly
unsteady phenomena, and one major challenge therefore is the control of the numerical time
step.
In addition, the derived schemes are expected
to ensure important stability properties such as the positivity of the
densities and internal energies, as well as discrete entropy inequalities. In this context, the aim of this work is to design a
robust positivity-preserving and entropy-satisfying scheme for the numerical approximation of two-phase flows with vapor or liquid fractions arbitrarily close to zero.

\medskip
This paper is concerned with the Baer-Nunziato two-phase flow model introduced in \cite{BN}, and studied in various papers \cite{Embid,Aw2,CGHS,GS,KSBMS}. The model consists of two sets of partial differential equations accounting for the evolution of mass, momentum and total energy for each phase, in addition to a transport equation for the phase fraction. The evolution equations of the two phases are coupled through first order non-conservative terms depending on the phase fraction gradient. A major feature of the Baer-Nunziato model is the assumption of two different velocities and two different pressures for the two phases. This approach is not genuinely usual in the nuclear industry where the commonly implemented methods assume the same pressure for the two phases at every time and everywhere in the flow. This latter assumption is justified by the very short time-scale associated with the relaxation of the phasic pressures towards an equilibrium. In the two-fluid two-pressure models (such as Baer-Nunziato's), zero-th order source terms are added in order to account for this pressure relaxation phenomenon as well as a drag force for the relaxation of the phasic velocities towards an equilibrium. Other source terms can also be included in order to account for the relaxation of phasic temperatures and chemical potentials. However, this work is mainly concerned with the convective effects and these zero-th order relaxation terms are not considered in the present paper. We refer to \cite{CGHS} for some modeling choices of these zero-th order relaxation terms and to \cite{HH,LDGH} for their numerical treatment. We also refer to the Conclusion section \ref{sec_conclusion} for some explanation on how the treatment of these terms will affect the numerical method presented in this paper. Various models exist that are related to the Baer-Nunziato model. One may mention various closure laws for the interfacial velocity and pressure \cite{GHS,SA,Glimm} or extensions to multi-phase flows \cite{Herard,Han-Hantke,Muller-Hantke}.

\medskip
Various approaches were considered to approximate the admissible weak solutions of the first order Baer-Nunziato model. One may mention exact Riemann solvers \cite{SWK} or approximate Riemann solvers  \cite{ACCG,TT,ACR}. Let us mention some other schemes grounded on flux of operator splitting techniques \cite{CCKS,CHS,Daude-Galon,KTN,LiuPhd,mathese,TKC,TT2}. Let us also mention the original work of \cite{Abgrall-Dallet,Dallet} where two staggered grids are used (one for the scalar unknowns and the other for the velocities) and where the internal energies are discretized instead of the total energies.

\medskip
The finite volume scheme we describe in the present paper for the convective part of the Baer-Nunziato model relies on two main building blocks. The first block is a relaxation finite volume scheme previously designed in \cite{CHSS} for the isentropic version of the Baer-Nunziato model (the phasic entropies remain constant in both time and space along the process), a scheme which was proved to ensure positive densities and to satisfy discrete energy dissipation inequalities. The second building block is a duality principle between energy and entropy which, according to the second principle of thermodynamics states that, keeping all the other thermodynamic variables constant, the mathematical entropy is a decreasing function of the total energy. This duality principle was already used in previous works to extend schemes designed for the isentropic Euler equations to the full Euler equations (see \cite{CGPIR} and \cite{Bouchut}), and in this work, we apply these techniques to the Baer-Nunziato two-phase flow model. In \cite{CHSS}, a relaxation Riemann solver was designed for the isentropic Baer-Nunziato model. The main properties of this scheme are  firstly, to compute positive densities thanks to an energy dissipation process, secondly to satisfy discrete energy inequalities for each phase, and finally to compute robust approximations of vanishing phase cases where one (or both) of the phase fractions are arbitrarily close to zero in some areas of the flow. The fact that the phasic entropies are simply advected for smooth solutions of the Baer-Nunziato model, combined with the energy-entropy duality principle, actually allows us to use the very same Riemann solver designed in \cite{CHSS}, provided that one supplements it with a correction step which consists in recovering the energy conservation and entropy dissipation for each phase. Concerning the neglected zero-th order source terms, there exist methods that allow their numerical treatment in accordance with the total entropy (the sum of both phasic entropies) dissipation (see \cite{HH,LDGH} and the Conclusion section \ref{sec_conclusion}).

\medskip

Nevertheless, we draw the reader's attention on the fact that the relaxation scheme for the isentropic model, and its extension to the full model described here, are restricted to the simulations of flows with subsonic relative speeds, \ie\, flows for which the difference between the material velocities of the phases is less than the speed of sound in the dominating phase, which would be the liquid phase in the usual operating of a nuclear power plant. For the simulation of nuclear liquid-vapor flows, this is not a restriction, but it would be interesting though to extend the present scheme to sonic and supersonic flows. An interesting work on this subject is done in \cite{Tassadit}.

\medskip
The resulting scheme is proven to preserve positive phase fractions, densities and internal energies, as well as real values of the phasic speeds of sound for stiffened gas and ideal gas \eos. In addition, it is proven to
satisfy a discrete entropy inequality for each phase, under a sub-characteristic
condition (Whitham's condition). To our knowledge, there exists no
other scheme that is proved to satisfy these properties altogether. The relaxation scheme compares well with two of the most popular existing schemes available for the Baer-Nunziato model, namely Schwendeman-Wahle-Kapila's Godunov-type scheme \cite{SWK} and Toro-Tokareva's HLLC scheme \cite{TT}.
In addition, for the same level of refinement, the scheme is shown to be
much more accurate than Rusanov's scheme, and for a given level of approximation error, the relaxation scheme is shown to perform much better in terms of computational cost than this classical scheme.
This is an important result because the approximate Riemann solver designed in \cite{CHSS} and re-used here relies on a fixed-point research for an increasing scalar function defined on the interval $(0,1)$. Hence, the numerical tests assess that no heavy computational costs are due to this fixed-point research.
Actually, comparing with Rusanov's scheme is quite significant since for such stiff configurations as vanishing phase cases, this scheme is commonly used in the industrial context because of its known robustness and simplicity \cite{HH}. Our relaxation scheme is first-order accurate and an interesting further work is the extension to higher orders (see \cite{DHCPT,Franquet,SWK,TT} for examples of high order schemes).

\medskip

The paper is organized as follows. Section \ref{sec_presbn} is devoted to the presentation of the first order Baer-Nunziato model. In Section \ref{sec_approx_relax}, an auxiliary two-phase flow model is introduced, where the phasic entropies are conserved and the phasic total energies are dissipated. We explain how to extend the relaxation scheme designed in \cite{CHSS} to this auxiliary model. For the sake of completeness, the fully detailed Riemann solution is given in Section \ref{constr_sol} of the appendix. In Section \ref{sec_duality}, we give the correction step which relies on the energy-entropy duality principle, and the resulting finite volume scheme for the Baer-Nunziato model is fully described. Finally, Section \ref{numtest} is devoted to the numerical tests. The relaxation finite volume scheme is compared with Schwendeman-Wahle-Kapila's Godunov-type scheme \cite{SWK}, Toro-Tokareva's HLLC scheme \cite{TT} and Rusanov's scheme. In addition to a convergence and CPU cost study, one test case simulates a near-vacuum configuration, and two test-cases assess that the scheme provides a robust numerical treatment of vanishing phase solutions. For the reader who is eager to rapidly implement the numerical scheme, we refer to Section \ref{choixa1a2} of the Appendix,  where the procedure for computing the finite volume numerical fluxes is fully described.

\section{The first order Baer-Nunziato model}
\label{sec_presbn}

% In this section, we show how to extend the finite volume method devised in the isentropic setting to the framework with phasic energies. The proposed extension relies on two key ingredients. The first one is the extension of the fixed point procedure based on two decoupled Euler like systems, respectively for phase 1 and phase 2, which was at the corner stone of the resolution of the Riemann problem for the Suliciu relaxation system in the isentropic setting. Such a strategy has been actually promoted to permit an easy extension to the full setting. The second ingredient is a duality principle in between entropy and energy that allows a trivial extension of Suliciu like approximations from isentropic pressure laws to the framework with energy. The combination of these two ingredients permits in turn a rather immediate extension, since most of the formulae derived in the isentropic setting are virtually kept unchanged.
%\vskip 0.5cm
The Baer-Nunziato model is a non-viscous two-phase flow model formulated in Eulerian coordinates and describing the evolution of the mass, momentum and total energy of each phase. Each phase is indexed by an integer $k\in\unde$, the density of phase $k$ is denoted $\rho_k$, its velocity $u_k$, and its specific total energy $E_k$. At each point $x$ of the space and at each time $t$, the probability of finding phase $k$ is denoted $\alpha_k(x,t)$. We assume the saturation constraint $\alpha_1+\alpha_2=1$. In one-space dimension, the convective part of the model introduce in \cite{BN} reads:

\begin{equation}
\label{BN_ener}
\dv_t \U + \dv_x {\bf \Fcal}(\U) + {\bf \Ccal}(\U)\dv_x \U = 0, \quad x \in \R, t>0,
\end{equation}
where
\begin{equation}
\label{BN_ener_def}
\U=\left [
\begin{matrix}
 \alpha_1 \\
 \alpha_1 \rho_1 \\
 \alpha_2 \rho_2 \\
 \alpha_1 \rho_1 u_1\\
  \alpha_2 \rho_2 u_2\\
 \alpha_1 \rho_1 E_1\\
   \alpha_2 \rho_2 E_2\\
\end{matrix}   \right ], \qquad
{\bf \Fcal}(\U)=\left [
\begin{matrix}
 0 \\
 \alpha_1 \rho_1 u_1 \\
 \alpha_2 \rho_2 u_2 \\
 \alpha_1 \rho_1 u_1^2 + \alpha_1 p_1\\
 \alpha_2 \rho_2 u_2^2 + \alpha_2 p_2\\
 \alpha_1 \rho_1 E_1 u_1 + \alpha_1 p_1 u_1\\
 \alpha_2 \rho_2 E_2 u_2 + \alpha_2 p_2 u_2\\
 \end{matrix}   \right ], \qquad {\bf \Ccal}(\U)\dv_x\U=
\left [
\begin{matrix}
 u_2  \\
 0 \\
 0\\
 -p_1 \\
 +p_1 \\
 -p_1 u_2 \\
 +p_1 u_2
 \end{matrix}   \right ] \dv_x \alpha_1.
\end{equation}

In the complete model, zero-th order source terms are added in order to account for the pressure relaxation phenomenon as well as a drag force for the relaxation of the phasic velocities towards an equilibrium. Other source terms can also be included in order to account for the relaxation of phasic temperatures and chemical potentials. However, this work is mainly concerned with the convective effects and these zero-th order relaxation terms are not considered in the present paper. We refer to the Conclusion section \ref{sec_conclusion} for some explanation on how to treat these relaxation terms without deteriorating the properties of the numerical method presented in this paper.

\medskip
The state vector $\U$ is expected to belong to the natural physical space:
\begin{equation}
\label{Omega_BN_ener}
 \Omega_\U = \left \lbrace \U \in \R^7,\,\alpha_1\in(0,1),\,\alpha_k\rho_k >0, \text{ and } \alpha_k\rho_k(E_k-u_k^2/2) >0 \text{ for } k\in\unde \right\rbrace.
\end{equation}
For each $k\in\unde$, $p_k$ denotes the pressure of phase $k$. Defining $e_k:=E_k-u_k^2/2$ the specific internal energy of phase $k$, the pressure $p_k=p_k(\rho_k,e_k)$ is given by an equation of state (\eos) as a function defined for all positive $\rho_k$ and all positive $e_k$.  
%assumed to satisfy the natural properties $p_k(\rho_k,e_k)>0$, $\dv_{\rho_k} p_k(\rho_k,e_k)>0$ and $\dv_{e_k} p_k(\rho_k,e_k)>0$ for all $\rho_k$, $e_k>0$.

\medskip
We assume that, taken separately, the two phases follow the second principle of thermodynamics so that for each phase $k\in\unde$, there exists a positive integrating factor $T_k(\rho_k,e_k)$ such that the following differential form
\begin{equation}
 \frac{1}{T_k} \left ( \frac{p_k}{\rho_k^{2}} \dd \rho_k - \dd e_k \right ),
\end{equation}
is the exact differential of some \emph{strictly convex} function $s_k(\rho_k,e_k)$, called the (mathematical) entropy of phase $k$.

%\subsection{Main mathematical properties}
The following proposition characterizes the wave structure of this system:
\begin{prop}
\label{propspectrebn}
For all $\U\in\Omega_\U$, the Jacobian matrix ${\bf \Fcal'}(\U) + {\bf \Ccal}(\U)$  admits the following seven eigenvalues:
\begin{equation}
\label{vp}
\begin{array}{c}
  \sigma_1(\U) = \sigma_2(\U)= u_2,\,\, \sigma_3(\U) =u_1\\
  \sigma_4(\U)=u_1 - c_1(\rho_1,e_1), \,\, \sigma_5(\U)= u_1 + c_1(\rho_1,e_1)\\
  \sigma_6(\U) = u_2 - c_2(\rho_2,e_2), \,\,  \sigma_7(\U)= u_2 + c_2(\rho_2,e_2),
\end{array}
\end{equation}
where $c_k(\rho_k,e_k)^2 = \dv_{\rho_k} p_k(\rho_k,e_k)+ p_k(\rho_k,e_k)/\rho_k^2 \, \dv_{e_k} p_k(\rho_k,e_k)$. If $c_k(\rho_k,e_k)^2>0$, then system (\ref{BN_ener}) is weakly hyperbolic on $\Omega_\U$ in the following sense: all the eigenvalues are real and
%is the speed of sound for phase $k$.
the corresponding right eigenvectors are linearly independent if, and only if,
\begin{equation}
\label{hypfail}
 \alpha_1 \neq 0, \quad \alpha_2 \neq 0, \quad |u_1-u_2| \neq c_1(\rho_1,e_1).
\end{equation}
When (\ref{hypfail}) is not satisfied, the system is said to be \textbf{resonant}. The characteristic fields associated with $\sigma_4$, $\sigma_5$, $\sigma_6$ and  $\sigma_7$ are genuinely non-linear, while the characteristic fields associated with $\sigma_{1,2}$ and $\sigma_{3}$ are linearly degenerate. 
\end{prop}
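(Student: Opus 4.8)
The main point is that both the eigenvalues and the genuinely-nonlinear/linearly-degenerate character of each field are invariant under a smooth change of state variables. The plan is therefore to leave the conservative variables $\U$ aside and work in the primitive variables $\mathbf{V}=(\alpha_1,\rho_1,u_1,p_1,\rho_2,u_2,p_2)$. On $\Omega_\U$ the map $\U\mapsto\mathbf{V}$ is a diffeomorphism (here the hypotheses $\alpha_1\neq0$, $\alpha_2\neq0$ are exactly what guarantees invertibility, which is why they appear in (\ref{hypfail}); on $\Omega_\U$ they hold automatically, so the only operative restriction inside $\Omega_\U$ is the resonance condition). Under this change, $\mathbf{\Fcal'}(\U)+\mathbf{\Ccal}(\U)$ is conjugated to a matrix $\mathbf{B}(\mathbf{V})$ with the same spectrum, whose right eigenvectors map to those of the original matrix via $\dv\U/\dv\mathbf{V}$, preserving linear independence.

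Rewriting the system in $\mathbf{V}$ uses the $\alpha_1$-transport equation $\dv_t\alpha_1+u_2\dv_x\alpha_1=0$ together with the mass/momentum balances, and the advection of the phasic entropies $s_k$ (valid for smooth solutions) combined with the \eos\ $p_k=p_k(\rho_k,s_k)$ to produce the pressure equations. The key structural outcome is that $\mathbf{B}$ is block lower-triangular in the ordering $(\alpha_1\mid\rho_1,u_1,p_1\mid\rho_2,u_2,p_2)$:
\[
\mathbf{B}=\begin{pmatrix} u_2 & 0 & 0 \\ \mathbf{b}_1 & \mathbf{A}_1 & 0 \\ \mathbf{b}_2 & 0 & \mathbf{A}_2 \end{pmatrix},
\]
where $\mathbf{A}_k$ is the usual $3\times3$ gas-dynamics matrix in $(\rho_k,u_k,p_k)$ with spectrum $\{u_k,u_k\pm c_k\}$, and $\mathbf{b}_1,\mathbf{b}_2$ are the columns multiplying $\dv_x\alpha_1$, proportional respectively to $(u_1-u_2)/\alpha_1$ and $(p_1-p_2)/\alpha_2$. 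The crucial observation is that no phase-$2$ primitive derivative enters the phase-$1$ equations, and vice versa: the only cross-coupling is through $\dv_x\alpha_1$, which lives entirely in the first column. Hence $\det(\mathbf{B}-\sigma I)=(u_2-\sigma)\det(\mathbf{A}_1-\sigma I)\det(\mathbf{A}_2-\sigma I)$, which immediately yields the seven eigenvalues (\ref{vp}), all real as soon as $c_k^2>0$.

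For the eigenvectors I would solve $(\mathbf{B}-\sigma I)\mathbf{r}=0$ blockwise: writing $\mathbf{r}=(r_0,\mathbf{r}_1,\mathbf{r}_2)$, the conditions are $(u_2-\sigma)r_0=0$, $(\mathbf{A}_1-\sigma I)\mathbf{r}_1=-\mathbf{b}_1 r_0$, $(\mathbf{A}_2-\sigma I)\mathbf{r}_2=-\mathbf{b}_2 r_0$. For the six eigenvalues belonging to a single block one forces $r_0=0$ and recovers the standard, independent gas-dynamics eigenvectors. The delicate point, and the main obstacle, is the eigenvalue $u_2$: it always carries algebraic multiplicity at least two ($\alpha_1$-transport plus the phase-$2$ contact) and becomes triple at resonance $u_2=u_1\pm c_1$. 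I would first show that $\mathbf{b}_2$ always lies in the range of $\mathbf{A}_2-u_2 I$, so the phase-$2$ block is solvable; the whole question then reduces to the phase-$1$ block $(\mathbf{A}_1-u_2 I)\mathbf{r}_1=-\mathbf{b}_1$ with $r_0\neq0$. Since $\det(\mathbf{A}_1-u_2 I)=(u_1-u_2)\big[(u_1-u_2)^2-c_1^2\big]$, when $|u_1-u_2|\neq c_1$ an eigenvector with $r_0\neq0$ does exist (even when $u_1=u_2$, because then $\mathbf{b}_1=0$ and the block kernels supply the missing directions), giving seven independent eigenvectors; whereas when $|u_1-u_2|=c_1$ one checks that $\mathbf{b}_1$ fails to lie in the range of $\mathbf{A}_1-u_2 I$, so no such eigenvector exists, the geometric multiplicity of $u_2$ drops to two against algebraic multiplicity three, and diagonalizability is lost. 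This is precisely (\ref{hypfail}).

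Finally, for the nature of the fields: $\sigma_4,\dots,\sigma_7$ are the classical acoustic fields of the two decoupled gas-dynamics blocks, hence genuinely nonlinear under the convexity of the \eos\ (the fundamental-derivative condition implied by the admissibility assumptions). For the linearly degenerate fields I would evaluate $\nabla\sigma\cdot\mathbf{r}$ in the primitive variables, exploiting its invariance under the change of variables: for $\sigma_3=u_1$ the eigenvector is $(0,(1,0,0),0)$ with vanishing $u_1$-entry, while for the $u_2$-fields the explicit block solution produces eigenvectors whose $u_2$-entry vanishes (the particular solution of the phase-$2$ block has zero velocity component, and the contact vector is a pure density variation). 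Thus $\nabla u_1\cdot\mathbf{r}=0$ and $\nabla u_2\cdot\mathbf{r}=0$ along the corresponding fields, which establishes their linear degeneracy and transfers back to the conservative formulation.
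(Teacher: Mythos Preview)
The paper does not supply a proof of this proposition: it is stated as a structural fact about the Baer--Nunziato system (with references to the earlier literature) and is immediately followed by two remarks, with no \texttt{proof} environment. So there is no argument in the paper to compare against.

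Your approach is the standard one and is correct. Passing to the primitive variables $(\alpha_1,\rho_1,u_1,p_1,\rho_2,u_2,p_2)$ and exhibiting the block lower-triangular form of the quasilinear matrix is exactly how this spectrum is usually computed; your identification of $\mathbf{b}_1\propto (u_1-u_2)/\alpha_1$ and $\mathbf{b}_2\propto (p_1-p_2)/\alpha_2$ is right, and the solvability discussion for the eigenvalue $u_2$ is the heart of the matter. Your check that $\mathbf{b}_2$ always lies in the range of $\mathbf{A}_2-u_2 I$, that $\mathbf{b}_1$ lies in the range of $\mathbf{A}_1-u_2 I$ precisely when $|u_1-u_2|\neq c_1$ (with the degenerate case $u_1=u_2$ handled by $\mathbf{b}_1=0$), and that the left null vector of $\mathbf{A}_1-u_2 I$ pairs nontrivially with $\mathbf{b}_1$ at resonance, is the clean way to obtain~(\ref{hypfail}). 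Your remark that $\alpha_1\neq 0$, $\alpha_2\neq 0$ are automatic on $\Omega_\U$ and serve mainly to make the change of variables (and the expressions of $\mathbf{b}_1,\mathbf{b}_2$) well defined is accurate.

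One small caveat: the genuine nonlinearity of the acoustic fields is not a consequence of the convexity of $s_k(\rho_k,e_k)$ alone; it requires a positive fundamental derivative (equivalently, a convexity condition on the isentropes $\tau\mapsto \PP_k(\tau,s)$). The paper tacitly assumes this, and you flag it, but be aware that ``implied by the admissibility assumptions'' overstates what strict convexity of the entropy actually gives. Apart from that, your sketch would expand into a complete and self-contained proof.
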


\begin{rem}
\label{remstiffeos}
 The condition $c_k(\rho_k,e_k)^2>0$ is a classical condition that ensures the hyperbolicity for monophasic flows. In general, assuming $\U\in\Omega_\U$ is not sufficient to guarantee that $c_k(\rho_k,e_k)^2>0$. For the stiffened gas \eos~ for instance, where the pressure is given by 
\begin{equation}
 \label{stiffeos}
 p_k(\rho_k,e_k)=(\gamma_k-1)\rho_k e_k- \gamma_k p_{\infty,k}, 
\end{equation}
where $\gamma_k>1$ and $p_{\infty,k}\geq 0$ are two constants, a classical calculation yields $\rho_kc_k(\rho_k,e_k)^2 = \gamma_k(\gamma_k-1)(\rho_k e_k- p_{\infty,k})$. Hence, the hyperbolicity of the system requires a more restrictive condition than simply the positivity of the internal energy which reads : $\rho_k e_k > p_{\infty,k}$. For the stiffened gas \eos, the relaxation scheme proposed in this article will be proven to preserve this condition at the discrete level.
\end{rem}

\begin{rem}
The system is not hyperbolic in the usual sense because when (\ref{hypfail}) is not satisfied, the right eigenvectors do not span the whole space $\R^7$. Two possible phenomena may cause a loss of the strict hyperbolicity: an interaction between the advective field of velocity $u_2$ with one of the acoustic fields of phase 1, and vanishing values of one of the phase fractions $\alpha_k$. In the physical configurations of interest in the present work (such as two-phase flows in nuclear reactors), the flows have strongly subsonic relative velocities, \ie\, a relative Mach number much smaller than one:
\begin{equation}
M=\frac{|u_1-u_2|}{c_1(\rho_1,e_1)} << 1,
\end{equation}
so that resonant configurations corresponding to wave interaction between acoustic fields and the $u_2$-contact discontinuity are unlikely to occur.
In addition, following the definition of the admissible physical space $\Omega_{\U}$, one never has $\alpha_1=0$ or $\alpha_2=0$. However,  $\alpha_k=0$ is to be understood in the sense $\alpha_k\to 0$ since one aim of this work is to construct a robust enough numerical scheme that could handle all the possible values of $\alpha_k,\,k\in \unde$, especially, arbitrarily small values.
\end{rem}

A simple computation shows that the smooth solutions of (\ref{BN_ener}) also obey the following additional conservation laws on the phasic entropies:
\begin{equation}
\label{BN_entrop_eq}
    \dv_t (\alpha_k \rho_k s_k) + \dv_x (\alpha_k \rho_k s_k u_k) = 0, \quad k\in\lbrace 1,2\rbrace.    
\end{equation}

As regards the non-smooth weak solutions of (\ref{BN_ener}), one has to add a so-called \textit{entropy criterion} in order to select the relevant physical solutions. In view of the convexity of the entropy $s_k(\rho_k,e_k)$, an entropy weak solution is a weak solution of (\ref{BN_ener}) which satisfies the following entropy inequalities in the usual weak sense:
\begin{equation}
\label{BN_entrop_kneq}
\dv_t (\alpha_k \rho_k s_k) + \dv_x (\alpha_k \rho_k s_k u_k)  \leq 0, \quad k\in\lbrace 1,2\rbrace. 
\end{equation}
When the solution contains shock waves, inequalities (\ref{BN_entrop_kneq}) are strict in order to account for the physical loss of entropy due to viscous phenomena that are not modeled in system (\ref{BN_ener}).

\medskip
The existence of the phasic entropy conservation laws (\ref{BN_entrop_eq}) and (\ref{BN_entrop_kneq}) will play a central role in the numerical approximation of the solutions of the Baer-Nunziato model. They permit an energy-entropy duality principle which allows a natural extension to the non-isentropic model (\ref{BN_ener}) of the energy-dissipative relaxation scheme designed for the isentropic model in \cite{CHSS}.

\medskip
For the sake of completeness, let us recall the system of PDEs corresponding to the first order isentropic model: for $x \in \R, t>0$:
\begin{equation}
\label{BN_bar}
          \begin{array}{ll}
\dv_t \alpha_1 + u_2 \dv_x \alpha_1 = 0, \\
\dv_t (\alpha_1 \rho_1) + \dv_x (\alpha_1 \rho_1 u_1) = 0,\\
\dv_t (\alpha_1 \rho_1 u_1) + \dv_x (\alpha_1 \rho_1 u_1^2 + \alpha_1 p_1(\tau_1)) - p_1(\tau_1) \dv_x \alpha_1 =0,\\
\dv_t (\alpha_2 \rho_2) + \dv_x (\alpha_2 \rho_2 u_2) = 0,\\
\dv_t (\alpha_2 \rho_2 u_2) + \dv_x (\alpha_2 \rho_2 u_2^2 + \alpha_2 p_2(\tau_2)) - p_1(\tau_1) \dv_x \alpha_2 =0.
\end{array}
\end{equation}
In this case, the phasic pressures are functions solely of the phasic specific volumes $p_k(\tau_k)$, where $\tau_k=\rho_k^{-1}$, and the admissible weak solutions are seen to dissipate the phasic energies according to: 
\begin{equation}
 \label{BN_bar_ener_inec}
 \dv_t  (\alpha_k \rho_k E_k) + \dv_x ( \alpha_k \rho_k E_k u_k + \alpha_k p_k(\tau_k)u_k) -u_2p_1(\tau_1)\dv_x \alpha_k \le 0,  \quad k\in\lbrace 1,2\rbrace, 
\end{equation}
with $E_k=u_k^2/2+e_k(\tau_k)$ where $e_k$ is an anti-derivative of $-p_k$.

\medskip
In a previous work \cite{CHSS}, a relaxation scheme was designed for this isentropic Baer-Nunziato model. This scheme was proved to satisfy desirable properties such as maintaining positive phase fractions and densities, ensuring discrete counterparts of the energy inequalities \eqref{BN_bar_ener_inec}, and finally computing with robustness solutions where some phase fractions are arbitrarily close to zero.

%%%%%%%%%%%%%%%%%%%%%%%%%%%%%%%%%%%%%%%%%%%%%%%%%%%%%%%%%%%%%%%%%%%%%%%%%%%%%%%
%%%%%%%%%%%%%%%%%%%%%%%%%%%%%%%%%%%%%%%%%%%%%%%%%%%%%%%%%%%%%%%%%%%%%%%%%%%%%%%
%%%% MODEL EN ENTROPIES      %%%%%%%%%%%%%%%%%%%%%%%%%%%%%%%%%%%%%%%%%%%%%%%%%%
%%%%%%%%%%%%%%%%%%%%%%%%%%%%%%%%%%%%%%%%%%%%%%%%%%%%%%%%%%%%%%%%%%%%%%%%%%%%%%%
%%%%%%%%%%%%%%%%%%%%%%%%%%%%%%%%%%%%%%%%%%%%%%%%%%%%%%%%%%%%%%%%%%%%%%%%%%%%%%%
\section{Approximating the weak solutions of an auxiliary model}
\label{sec_approx_relax}

As an intermediate step towards the purpose of approximating the entropy weak solutions of (\ref{BN_ener}), let us introduce the following auxiliary system

\begin{equation}\label{BN_entrop}
\dv_t \vect{U} + \dv_x {\bf \Fbb}(\vect{U}) + {\bf \Cbb}(\vect{U})\dv_x \vect{U} = 0, \quad x \in \R, t>0,
\end{equation}
where
\begin{equation}
\label{BN_entrop_def}
\vect{U}=\left [
\begin{matrix}
 \alpha_1 \\
 \alpha_1 \rho_1 \\
 \alpha_2 \rho_2 \\
 \alpha_1 \rho_1 u_1\\
  \alpha_2 \rho_2 u_2\\
 \alpha_1 \rho_1 s_1\\
   \alpha_2 \rho_2 s_2\\
\end{matrix}   \right ], \qquad
{\bf \Fbb}(\vect{U})=\left [
\begin{matrix}
 0 \\
 \alpha_1 \rho_1 u_1 \\
 \alpha_2 \rho_2 u_2 \\
 \alpha_1 \rho_1 u_1^2 + \alpha_1 \PP_1\\
 \alpha_2 \rho_2 u_2^2 + \alpha_2 \PP_2\\
 \alpha_1 \rho_1 s_1 u_1 \\
 \alpha_2 \rho_2 s_2 u_2 \\
 \end{matrix}   \right ], \qquad {\bf \Cbb}(\vect{U})\dv_x\vect{U}=
\left [
\begin{matrix}
 u_2  \\
 0 \\
 0\\
 -\PP_1 \\
 +\PP_1 \\
 0 \\
 0
 \end{matrix}   \right ] \dv_x \alpha_1.
\end{equation}
Compared to the classical Baer-Nunziato model \eqref{BN_ener}, the phasic energy equations have been replaced by the two conservation laws for the phasic entropies. Hence, $\alpha_k\rho_k s_k$ now play the role of independent conservative variables whose evolution is governed according to their own conservative equations. The phasic pressures $\PP_k$ are now seen as functions of the phasic specific volumes $\tau_k=\rho_k^{-1}$ and the phasic entropies $s_k$ so that $\PP_k=\PP_k(\tau_k,s_k)$. These pressure functions are computed as follows: by the second law of thermodynamics, one has:
\[
\frac{\dv s_k}{\dv e_k}(\rho_k,e_k) = -\frac{1}{T_k(\rho_k,e_k)}, \qquad \text{with $T_k(\rho_k,e_k)>0$}.
\]
Hence, the mapping $e \mapsto s_k(\rho_k,e)$ is monotone and thus invertible for all $\rho_k>0$. We denote by $s \mapsto e_k(\tau_k,s)$ the inverse of this mapping, which is a positive function. The dependency on the density $\rho_k$ has been replaced here by a dependency on the specific volume $\tau_k$. The pressure function $\PP_k(\tau_k,s_k)$ is then defined as follows : $\PP_k(\tau_k,s_k)=p_k(\tau_k^{-1},e_k(\tau_k,s_k))$. The phasic total energy is recovered by computing $E_k(u_k,\tau_k,s_k)=u_k^2/2+e_k(\tau_k,s_k)$.

\medskip
The auxiliary state vector $\vect{U}$ is now expected to belong to the physical space:
\begin{equation}
\label{Omega_BN_ent}
 \Omega_{\vect{U}} = \left \lbrace \vect{U} \in \R^7,\,\alpha_1\in(0,1),\,\alpha_k\rho_k >0, \text{ and } \alpha_k\rho_k e_k(\tau_k,s_k) >0 \text{ for } k\in\unde \right\rbrace.
\end{equation}
% Actually, the entropy is usually defined up to a constant, however, we assume that $s_k>0$ is a sufficient condition to uniquely define the internal energy in the following way: for fixed $\tau_k$ and $s_k$, the phasic internal energy $e_k(\tau_k,s_k)$ is defined as the (assumed unique) positive number $e$ satisfying  $p_k(\tau_k^{-1},e)=\PP_k(\tau_k,s_k)$, and the phasic total energy is recovered by computing $E_k(u_k,\tau_k,s_k)=u_k^2/2+e_k(\tau_k,s_k)$. 

\medskip
We have the following property:

\begin{prop}
\label{prop_ent_convex}
The two following equivalent assertions are satisfied :
\begin{enumerate}
 %\item[(i)]  The mapping $(\tau_k,e_k) \mapsto s_k$ is strictly convex.
 \item[(i)] The mapping 
 $$(\alpha_k \rho_k s_k): \quad  \left \lbrace \begin{array}{lll}
      \Omega_\U & \longrightarrow	&\R \\
      \U 	& \longmapsto 		&(\alpha_k \rho_k s_k)(\U)
   \end{array}
   \right.
 $$
 satisfies $\dv_{\alpha_k \rho_k E_k}(\alpha_k \rho_k s_k)(\U)=-1/T_k$ and is convex.
 \item[(ii)]The mapping 
  $$(\alpha_k \rho_k E_k) :\quad \left \lbrace \begin{array}{lll}
      \Omega_{\vect{U}} & \longrightarrow	&\R^+ \\
      \vect{U} 	& \longmapsto 		&(\alpha_k \rho_k E_k)(\vect{U})
   \end{array}
 \right.
 $$
satisfies $\dv_{\alpha_k \rho_k s_k}(\alpha_k \rho_k E_k)(\vect{U})=-T_k$ and is convex.
\end{enumerate}
\end{prop}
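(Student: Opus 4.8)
The plan is to separate the first-order content of the statement (the two partial-derivative identities) from its second-order content (the two convexity claims), and to deduce the equivalence of (i) and (ii) from a soft duality lemma rather than proving each convexity from scratch. The starting observation is that passing from $\U$ to $\vect{U}$ merely replaces the last two components $\alpha_k\rho_k E_k$ by $\alpha_k\rho_k s_k$, the other five being shared; freezing $(\alpha_k,\alpha_k\rho_k,\alpha_k\rho_k u_k)$, the two maps appearing in (i) and (ii) are mutually inverse in the remaining energy/entropy slot. Throughout I abbreviate $m_k=\alpha_k\rho_k$ and $q_k=\alpha_k\rho_k u_k$.

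The derivative identities follow directly from the Gibbs relation $\dd s_k=\frac{1}{T_k}\bigl(\frac{p_k}{\rho_k^2}\dd\rho_k-\dd e_k\bigr)$, which gives $\dv_{e_k}s_k=-1/T_k$ and, by inversion of $e\mapsto s_k$, $\dv_{s_k}e_k=-T_k$. Since $e_k=(\alpha_k\rho_k E_k)/m_k-u_k^2/2$, the chain rule yields $\dv_{\alpha_k\rho_k E_k}(\alpha_k\rho_k s_k)=m_k(\dv_{e_k}s_k)/m_k=-1/T_k$, and symmetrically $\dv_{\alpha_k\rho_k s_k}(\alpha_k\rho_k E_k)=-T_k$. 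For the equivalence of the two convexity statements I would establish the following elementary lemma: if $\Phi(x,\xi)$ is jointly convex and strictly decreasing in $\xi$, then the function $\Psi(x,\sigma)$ defined by solving $\sigma=\Phi(x,\xi)$ for $\xi$ is again jointly convex. This is an epigraph computation: because $\Phi$ decreases in its last argument, $\{\xi\ge\Psi(x,\sigma)\}=\{\Phi(x,\xi)-\sigma\le 0\}$, and the latter is a sublevel set of the convex function $(x,\xi,\sigma)\mapsto\Phi(x,\xi)-\sigma$, hence convex; thus the epigraph of $\Psi$ is convex. Applied with $x=(\alpha_k,m_k,q_k)$, $\xi=\alpha_k\rho_k E_k$ and $\sigma=\alpha_k\rho_k s_k$ --- both maps being strictly monotone in the last variable by the identities just computed --- this lemma turns the convexity in (i) into the convexity in (ii); the reverse implication is the same lemma applied to the inverse map.

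It then suffices to prove one of the convexity claims, say the convexity of $\U\mapsto\alpha_k\rho_k s_k$ on $\Omega_\U$. Here I would use the homogeneity of the conservative entropy. Writing the partial internal energy as $\alpha_k\rho_k e_k=\alpha_k\rho_k E_k-q_k^2/(2m_k)$, one has $\alpha_k\rho_k s_k=m_k\,s_k\!\bigl(\alpha_k/m_k,(\alpha_k\rho_k e_k)/m_k\bigr)$, which is precisely the perspective function of $s_k(\tau_k,e_k)$ with homogenizing variable $m_k$. Since the perspective of a convex function is convex, this map is convex in $(\alpha_k,m_k,\alpha_k\rho_k e_k)$ and, by the derivative identity, nonincreasing in $\alpha_k\rho_k e_k$. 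As $\alpha_k\rho_k e_k$ is a concave function of $(m_k,q_k,\alpha_k\rho_k E_k)$ --- the kinetic term $q_k^2/(2m_k)$ being jointly convex in $(q_k,m_k)$ --- the composition rule ``convex nonincreasing $\circ$ concave is convex'' delivers the convexity of $\alpha_k\rho_k s_k$; note that $\Omega_\U$ is itself convex, being cut out by the superlevel set $\{\alpha_k\rho_k e_k>0\}$ of a concave function together with the linear constraints $\alpha_1\in(0,1)$ and $m_k>0$.

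The part I expect to require the most care is the correct identification of variables in this last step: the convexity of $s_k$ that is actually invoked is convexity with respect to the pair $(\tau_k,e_k)$ of specific volume and internal energy --- the precise form taken by the thermodynamic stability assumption --- and one must keep the genuinely curved directions distinct from the radial direction, along which $\alpha_k\rho_k s_k$ is merely affine because it is homogeneous of degree one. This is why one obtains convexity, and not strict convexity, in the conservative variables, in agreement with the statement. Once the perspective-plus-composition decomposition is in place the remaining computations are routine, and the duality lemma transports the conclusion to assertion (ii).
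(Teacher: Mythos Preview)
Your proposal is correct and, in fact, more complete than the paper's own proof. The paper establishes the derivative identities by expanding the differential $T_k\,\dd(\alpha_k\rho_k s_k)$ from the Gibbs relation and then reading off the coefficient of $\dd(\alpha_k\rho_k E_k)$; for the convexity claims it merely states that they follow from the convexity of $s_k$ via ``lengthy calculations'' and refers the reader to a textbook. Your route is genuinely different on two counts. First, you supply an actual argument for the convexity, decomposing $\alpha_k\rho_k s_k$ as a perspective function composed with the concave map $(m_k,q_k,\alpha_k\rho_k E_k)\mapsto \alpha_k\rho_k e_k$; this replaces a Hessian computation by standard convex-analysis building blocks and is both shorter and more transparent. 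Second, your epigraph lemma makes the equivalence (i)$\Leftrightarrow$(ii) a soft structural fact rather than a repetition of the calculation, which is something the paper does not attempt.

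One point to watch: your perspective argument uses convexity of $s_k$ as a function of $(\tau_k,e_k)$, whereas the paper phrases its hypothesis as strict convexity of $s_k(\rho_k,e_k)$. You flag this yourself, calling the $(\tau_k,e_k)$ form ``the precise form taken by the thermodynamic stability assumption''; that is indeed the standard hypothesis in the Euler literature and the one needed here, but it is not literally the same as the paper's wording, and the two are not equivalent for arbitrary functions. If you want your proof to be airtight under the paper's stated assumption, you should either argue that, together with the Gibbs relation and positivity of $T_k$, convexity in $(\rho_k,e_k)$ implies convexity in $(\tau_k,e_k)$, or simply note that the correct assumption for the result is the latter.
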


\begin{proof}
In order to compute the partial derivative of $(\alpha_k \rho_k s_k)(\U)$ with respect to $\alpha_k \rho_k E_k$, let us calculate the differential of $\alpha_k \rho_k s_k$. Invoking the second law of thermodynamics $T_k\dd s_k=-\dd e_k + p_k\rho_k^{-2}\dd\rho_k$ and the definition $e_k=E_k-u_k^2/2$ of the internal energy, we obtain:
$$
\begin{aligned}
 T_k \dd (\alpha_k\rho_k s_k) & = (\alpha_k \rho_k) T_k \dd s_k + T_k s_k\dd(\alpha_k\rho_k)\\ 
			  & = - (\alpha_k \rho_k)\dd e_k + p_k \rho_k^{-2}(\alpha_k \rho_k) \dd \rho_k + T_k s_k\dd(\alpha_k\rho_k)\\
			  & = - (\alpha_k \rho_k)\dd e_k -  p_k \dd \alpha_k  + (p_k\rho_k^{-1}+T_k s_k)\dd(\alpha_k\rho_k)\\
			  & = - (\alpha_k \rho_k)\dd E_k + (\alpha_k \rho_k u_k) \dd u_k-  p_k \dd \alpha_k  + (p_k\rho_k^{-1}+T_k s_k)\dd(\alpha_k\rho_k)\\
			  & = - (\alpha_k \rho_k)\dd E_k + u_k\dd(\alpha_k \rho_k u_k) -u_k^2\dd( \alpha_k \rho_k)-  p_k \dd \alpha_k  + (p_k\rho_k^{-1}+T_k s_k)\dd(\alpha_k\rho_k)\\
			  & = - \dd (\alpha_k \rho_k E_k) +u_k\dd(\alpha_k\rho_k u_k) -  p_k \dd \alpha_k +   (E_k-u_k^2+ p_k\rho_k^{-1}+T_k s_k)\dd(\alpha_k\rho_k).
\end{aligned}
$$
For $k\in\unde$,  $\dv_{\alpha_k \rho_k E_k}(\alpha_k \rho_k s_k)(\U)$ is the derivative of ${\alpha_k \rho_k s_k}$ with respect to $\alpha_k \rho_k E_k$ when keeping constant the variables $(\alpha_i,\alpha_i\rho_i,\alpha_i\rho_i u_i)$ for $i\in\unde$, and the variable $\alpha_{3-k} \rho_{3-k} E_{3-k}$. Hence, $\dv_{\alpha_k \rho_k E_k}(\alpha_k \rho_k s_k)(\U)=-1/T_k$ and the same computation proves that $\dv_{\alpha_k \rho_k s_k}(\alpha_k \rho_k E_k)(\vect{U})=-T_k$. The proof of the convexity of these two mappings relies on the convexity of the function $s_k(\rho_k,e_k)$. It follows lengthy calculations (see \cite{GR}). We admit this result.
\end{proof}

\medskip

Of course, smooth solutions of \eqref{BN_entrop} also solve \eqref{BN_ener} in the classical sense, which implies that they share the same hyperbolic structure, but entropy weak solutions of \eqref{BN_ener} and (\ref{BN_entrop}) do differ. Indeed, following Proposition \ref{prop_ent_convex},  since $\vect{U}\mapsto(\alpha_k \rho_k E_k)(\vect{U})$ is convex, while the entropy weak solutions of \eqref{BN_ener} are defined so as to dissipate the phasic entropies, it is natural to select weak solutions of the hyperbolic model (\ref{BN_entrop}) according to the differential inequalities:
\begin{equation}
\label{BN_ener_kneq}
\dv_t  (\alpha_k \rho_k E_k) + \dv_x ( \alpha_k \rho_k E_k u_k + \alpha_k\PP_k(\tau_k,s_k)u_k) - \PP_1(\tau_1,s_1) u_2\dv_x \alpha_k \le 0, \quad k\in\unde.
\end{equation}

Observe that for constant initial entropies $s_k(x,0)=s_k^0$, the auxiliary model \eqref{BN_entrop} reduces to the isentropic model \eqref{BN_bar}, with the pressure laws $\tau_k \mapsto \PP(\tau_k,s_k^0)$. Therefore, in the case of constant entropies, extending the relaxation scheme designed in \cite{CHSS} to the auxiliary model \eqref{BN_entrop} is straightforward. Furthermore, even for non constant initial entropies, the derivation of the self-similar solutions for \eqref{BN_entrop} is very close to the isentropic setting because the specific entropies are now just advected by the corresponding phase velocity:
\begin{equation}
\label{advsbnAE}
\dv_t s_k + u_k\dv_x s_k = 0, \quad k\in\unde.
\end{equation} 
For this reason, the Riemann solutions of the auxiliary model \eqref{BN_entrop} are simpler to approximate than those of (\ref{BN_ener}). But again, if smooth solutions of (\ref{BN_ener}) and (\ref{BN_entrop}) are the same, their shock solutions are distinct. Hence, a numerical scheme for advancing in time discrete solutions of the original PDEs (\ref{BN_ener})--(\ref{BN_entrop_kneq}) based on solving a sequence of Riemann solutions for the auxiliary model (\ref{BN_entrop})--(\ref{BN_ener_kneq}) must be given a correction which enforces an energy discretization which is consistent with the original model \eqref{BN_ener}, while ensuring discrete entropy inequalities consistently with \eqref{BN_entrop_kneq}. 
The required correction step is in fact immediate because of the general thermodynamic assumptions made on the complete equation of state. It relies on a duality principle in between energy and entropy, which, according to Proposition \ref{prop_ent_convex}, states that $\alpha_k\rho_k s_k$ is a decreasing function of $\alpha_k\rho_k E_k$.  

\medskip
In the present section, we provide a relaxation scheme for approximating the energy dissipating weak solutions of the auxiliary system \eqref{BN_entrop}. This relaxation scheme is straightforwardly obtained as an extension of the relaxation scheme designed in \cite{CHSS} for the isentropic Baer-Nunziato model and consequently inherits its main properties (positivity of the phase fractions and densities, numerical energy dissipation, robustness for vanishing phase fractions). Again this extension is made possible thanks to the advective equations \eqref{advsbnAE} on the entropies. In Sections \ref{subsec_relax} and \ref{subsec_Riemsol}, we define the relaxation approximation for system \eqref{BN_entrop} and state the existence theorem for the corresponding Riemann solver. This existence result, as it directly follows from the isentropic case, is not proven here. We refer the reader to \cite{CHSS} for the complete proof. In Section \ref{subsec_scheme_relax}, we derive, thanks to this approximate Riemann solver, the numerical scheme for the auxiliary model \eqref{BN_entrop}. Finally, in Section \ref{sec_duality}, we explain how to obtain a positive and entropy-satisfying scheme for the original model \eqref{BN_ener}, thanks to the above mentioned duality principle between energy and entropy.

\subsection{Relaxation approximation for the auxiliary model \eqref{BN_entrop}}
\label{subsec_relax}
System (\ref{BN_entrop}) shares the same hyperbolic structure as system \eqref{BN_ener}. Therefore, it has four genuinely non-linear fields associated with the phasic acoustic waves, which make the construction of an exact Riemann solver very difficult. In the spirit of \cite{SJ}, the relaxation approximation consists in considering an enlarged system involving two additional unknowns $\T_k$, associated with linearizations $\pi_k$ of the phasic pressure laws. This linearization is designed to get a quasilinear enlarged system, shifting the initial non-linearity from the convective part to a stiff relaxation source term. The relaxation approximation is based on the idea that the solutions of the original system are formally recovered as the limit of the solutions of the proposed enlarged system, in the regime of a vanishing relaxation coefficient $\eps>0$. For a general framework on relaxation schemes we refer to \cite{CGPIR,CGS,Bouchut}.

\medskip
We propose to approximate the Riemann problem for (\ref{BN_entrop}) by the self-similar solution of the following Suliciu relaxation model:
\begin{equation}
\label{BNrelax_entrop}
\dv_t \vect{W}^{\eps} + \dv_x \textbf{g}(\vect{W}^{\eps}) + \textbf{d}(\vect{W}^{\eps})\dv_x \vect{W}^{\eps}= \dfrac{1}{\eps} \mathcal{R}(\vect{W}^{\eps}), \quad x\in\R, \, t>0,
\end{equation}
with state vector $\vect{W} = (\alpha_1,\alpha_1 \rho_1,\alpha_2 \rho_2, \alpha_1\rho_1 u_1, \alpha_2 \rho_2 u_2,\alpha_1 \rho_1 s_1,\alpha_2 \rho_2 s_2, \alpha_1 \rho_1 \T_1, \alpha_2 \rho_2 \T_2)^{T}$ and
\begin{equation}
\textbf{g}(\vect{W})=\left [
\begin{matrix}
 0 \\
 \alpha_1 \rho_1 u_1 \\
 \alpha_2 \rho_2 u_2 \\
 \alpha_1 \rho_1 u_1^2 + \alpha_1 \pi_1\\
  \alpha_2 \rho_2 u_2^2 + \alpha_2 \pi_2\\
 \alpha_1 \rho_1 s_1 u_1 \\
\alpha_2 \rho_2 s_2 u_2 \\
 \alpha_1 \rho_1 \T_1 u_1 \\
\alpha_2 \rho_2 \T_2 u_2 
 \end{matrix}   \right ], \ \ \textbf{d}(\vect{W})\dv_x \vect{W}=
\left [
\begin{matrix}
 u_2  \\
 0 \\
 0 \\
 -\pi_1 \\
 \pi_1 \\
 0 \\
 0 \\
 0 \\
 0 
\end{matrix}   \right ]\dv_x \alpha_1, \ \ \mathcal{R}(\vect{W})=\left [
\begin{matrix}
 0 \\
 0  \\
 0 \\
 0  \\
 0 \\
 0 \\
 0 \\
\alpha_1 \rho_1 (\tau_1 -\T_1) \\
\alpha_2 \rho_2 (\tau_2 -\T_2 ) 
 \end{matrix}   \right ].
\end{equation}
For each phase $k$ in $\lbrace 1,2 \rbrace$ the pressure $\pi_k$ is a (partially) linearized pressure $\pi_k(\tau_k,\T_k,s_k)$, the \eos \, of which is defined by:
\begin{equation}
\label{BNpress_relax}
\pi_k(\tau_k,\T_k,s_k)=\PP_k(\T_k,s_k) + a_k^2(\T_k-\tau_k).
\end{equation}

In the formal limit $\eps \to 0$, the additional variable $\T_k$ tends towards the specific volume $\tau_k$, and the linearized pressure law $\pi_k(\tau_k,\T_k,s_k)$ tends towards the original non-linear pressure law $\PP_k(\tau_k,s_k)$, thus recovering system (\ref{BN_entrop}) in the first seven equations of (\ref{BNrelax_entrop}). The solution of (\ref{BNrelax_entrop}) should be parametrized by $\eps$. However, in order to ease the notation, we omit the superscript $^\eps$ in $\vect{W}^{\eps}$. The constants $a_k$ in (\ref{BNpress_relax}) are two positive parameters that must be taken large enough so as to satisfy the following sub-characteristic condition (also called Whitham's condition):
\begin{equation}
\label{whitham}
a_k^2 > -\dv_{\tau_k}\PP_k(\T_k,s_k), \quad \text{$k$ in $\lbrace 1,2\rbrace$},
\end{equation}
for all $\T_k$ and $s_k$ encountered in the solution of (\ref{BNrelax_entrop}). Performing a Chapman-Enskog expansion, we can see that Whitham's condition expresses that system (\ref{BNrelax_entrop}) is a viscous perturbation of system (\ref{BN_entrop}) in the regime of small $\eps$. In addition, there exists two energy functionals $\mathcal{E}_k(u_k,\tau_k,\T_k,s_k)$, which under Whitham's condition, provide an $H$-theorem like result as stated in
\begin{prop}
\label{prop_gibbs}
The smooth solutions of (\ref{BNrelax_entrop}) satisfy the following energy equations
\begin{equation}
 \dv_t(\alpha_k\rho_k\E_k)+\dv_x(\alpha_k\rho_k\E_k u_k+ \alpha_k\pi_k u_k) -u_2\pi_1\dv_x\alpha_k =  \dfrac{1}{\eps}\alpha_k\rho_k \left (a_k^2+\dv_{\tau_k}\PP_k(\T_k,s_k) \right)(\tau_k-\T_k)^2,
\end{equation}
where
\begin{equation}
\label{ener_relax_def}
 \mathcal{E}_k:= \mathcal{E}_k(u_k,\tau_k,\T_k,s_k) = \dfrac{u_k^2}{2} + e_k(\T_k,s_k) + \dfrac{\pi_k^2(\tau_k,\T_k,s_k) - \PP_k^2(\T_k,s_k)}{2a_k^2}, \qquad k \in \lbrace1,2\rbrace.
\end{equation}
Under Whitham's condition \eqref{whitham}, to be met for all the $(\T_k,s_k)$ under consideration, the following Gibbs principles are satisfied for $k \in \unde$:
\begin{equation}
\label{gibbsAE}
 \tau_k = \argmin_{\T_k} \lbrace \mathcal{E}_k(u_k,\tau_k,{\T_k},s_k)\rbrace, \quad \text{and} \quad \mathcal{E}_k(u_k,\tau_k,{\tau_k},s_k)=E_k(u_k,\tau_k,s_k),
 \end{equation}
where $E_k(u_k,\tau_k,s_k)=u_k^2/2+e_k(\tau_k,s_k)$.
\end{prop}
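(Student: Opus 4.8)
The plan is to rewrite the nine scalar equations of \eqref{BNrelax_entrop} in Lagrangian (material-derivative) form along each phase velocity, and then to differentiate $\E_k$ by the chain rule. Writing $\tfrac{D_k}{Dt} := \dv_t + u_k\dv_x$ for the material derivative attached to phase $k$, the mass equations give $\tfrac{D_k}{Dt}(\alpha_k\rho_k) = -(\alpha_k\rho_k)\dv_x u_k$, which lets me pass between conservative and non-conservative forms through the elementary identity $\dv_t(\alpha_k\rho_k\phi) + \dv_x(\alpha_k\rho_k\phi\,u_k) = (\alpha_k\rho_k)\tfrac{D_k}{Dt}\phi$, valid for any scalar $\phi$. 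Reading off the remaining components, I get $\tfrac{D_k}{Dt}s_k = 0$ (the entropies are advected), $\tfrac{D_k}{Dt}\T_k = \tfrac1\eps(\tau_k - \T_k)$ (from the relaxation source), the momentum relations $(\alpha_1\rho_1)\tfrac{D_1}{Dt}u_1 = -\alpha_1\dv_x\pi_1$ and $(\alpha_2\rho_2)\tfrac{D_2}{Dt}u_2 = -\dv_x(\alpha_2\pi_2) - \pi_1\dv_x\alpha_1$, and the transport $\dv_t\alpha_1 + u_2\dv_x\alpha_1 = 0$, i.e. $\tfrac{D_2}{Dt}\alpha_1 = 0$ while $\tfrac{D_1}{Dt}\alpha_1 = (u_1 - u_2)\dv_x\alpha_1$. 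I will also use the thermodynamic identities $\dv_{\T_k}e_k(\T_k,s_k) = -\PP_k(\T_k,s_k)$ and $\dv_{s_k}e_k = -T_k$ from the second law stated before Proposition~\ref{prop_ent_convex}, together with the definition \eqref{BNpress_relax} of $\pi_k$.

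Next I would differentiate the relaxation energy \eqref{ener_relax_def} along $\tfrac{D_k}{Dt}$. Since $\tfrac{D_k}{Dt}s_k = 0$, the internal-energy term contributes $-\PP_k\,\tfrac{D_k}{Dt}\T_k$, and writing $\pi_k - \PP_k = a_k^2(\T_k - \tau_k)$ the acoustic correction $\tfrac{\pi_k^2 - \PP_k^2}{2a_k^2}$ contributes $(\T_k - \tau_k)\dv_{\T_k}\PP_k\,\tfrac{D_k}{Dt}\T_k + \pi_k\big(\tfrac{D_k}{Dt}\T_k - \tfrac{D_k}{Dt}\tau_k\big)$. Adding the kinetic term $u_k\tfrac{D_k}{Dt}u_k$ and collecting, all terms carrying $\tfrac{D_k}{Dt}\T_k$ factor as $(\T_k - \tau_k)\big(a_k^2 + \dv_{\T_k}\PP_k\big)\tfrac{D_k}{Dt}\T_k$; substituting $\tfrac{D_k}{Dt}\T_k = \tfrac1\eps(\tau_k - \T_k)$ produces the announced relaxation term proportional to $\tfrac1\eps\big(a_k^2 + \dv_{\tau_k}\PP_k(\T_k,s_k)\big)(\tau_k - \T_k)^2$, the quadratic contribution whose sign, under Whitham's condition \eqref{whitham}, governs the dissipation of the relaxation energy. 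What survives beside this source is exactly $u_k\tfrac{D_k}{Dt}u_k - \pi_k\tfrac{D_k}{Dt}\tau_k$.

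It then remains to multiply by $\alpha_k\rho_k$, use the mass identity to recast $(\alpha_k\rho_k)\tfrac{D_k}{Dt}\E_k$ as $\dv_t(\alpha_k\rho_k\E_k) + \dv_x(\alpha_k\rho_k\E_k u_k)$, and to show that $(\alpha_k\rho_k)\big(u_k\tfrac{D_k}{Dt}u_k - \pi_k\tfrac{D_k}{Dt}\tau_k\big) = -\dv_x(\alpha_k\pi_k u_k) + u_2\pi_1\dv_x\alpha_k$. For this I need the volume balance $(\alpha_k\rho_k)\tfrac{D_k}{Dt}\tau_k = \tfrac{D_k}{Dt}\alpha_k + \alpha_k\dv_x u_k$, obtained by differentiating $(\alpha_k\rho_k)\tau_k = \alpha_k$. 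Inserting the momentum relations and this volume balance, the acoustic work reorganizes into $-\dv_x(\alpha_k\pi_k u_k)$ plus a non-conservative coupling term, which combines with the transport of $\alpha_1$ to give precisely $u_2\pi_1\dv_x\alpha_k$. This bookkeeping is where I expect the only genuine difficulty: the two phases are not symmetric (for $k=2$ the fraction is advected by $u_2$, so $\tfrac{D_2}{Dt}\alpha_2 = 0$, whereas for $k=1$ one has $\tfrac{D_1}{Dt}\alpha_1 = (u_1-u_2)\dv_x\alpha_1 \neq 0$), and the non-conservative pressure coupling $\mp\,\pi_1\dv_x\alpha_1$ enters the two momentum equations with opposite signs, so the cancellations must be tracked case by case; both cases nonetheless collapse onto the single stated form.

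Finally, the two Gibbs principles \eqref{gibbsAE} are purely algebraic. Viewing $\E_k$ as a function of $\T_k$ at fixed $(u_k,\tau_k,s_k)$ and using $\dv_{\T_k}e_k = -\PP_k$, $\dv_{\T_k}\pi_k = \dv_{\T_k}\PP_k + a_k^2$ and $\pi_k - \PP_k = a_k^2(\T_k - \tau_k)$, a short computation gives $\dv_{\T_k}\E_k = (\T_k - \tau_k)\big(a_k^2 + \dv_{\T_k}\PP_k\big)$. Under Whitham's condition \eqref{whitham} the factor $a_k^2 + \dv_{\T_k}\PP_k$ is strictly positive for all admissible $(\T_k,s_k)$, so $\dv_{\T_k}\E_k$ is negative for $\T_k < \tau_k$ and positive for $\T_k > \tau_k$; hence $\T_k = \tau_k$ is the unique global minimiser. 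Evaluating there gives $\pi_k = \PP_k(\tau_k,s_k)$, so the acoustic correction vanishes and $\E_k(u_k,\tau_k,\tau_k,s_k) = u_k^2/2 + e_k(\tau_k,s_k) = E_k(u_k,\tau_k,s_k)$, which is the second identity.
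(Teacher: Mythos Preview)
The paper states Proposition~\ref{prop_gibbs} without proof, so there is no authors' argument to compare against. Your derivation is the standard one for Suliciu-type relaxation energies and is correct: the material-derivative bookkeeping you outline does close exactly as you claim, and the phase-by-phase verification of the identity $(\alpha_k\rho_k)\big(u_k\tfrac{D_k}{Dt}u_k - \pi_k\tfrac{D_k}{Dt}\tau_k\big) = -\dv_x(\alpha_k\pi_k u_k) + u_2\pi_1\dv_x\alpha_k$ indeed works in both cases (for $k=1$ the extra term $-\pi_1\tfrac{D_1}{Dt}\alpha_1 = -\pi_1(u_1-u_2)\dv_x\alpha_1$ combines with $-\alpha_1\dv_x(\pi_1 u_1)$ to give the result; for $k=2$ one uses $\tfrac{D_2}{Dt}\alpha_2 = 0$ and the non-conservative momentum coupling directly). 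The Gibbs-principle computation $\dv_{\T_k}\E_k = (\T_k-\tau_k)\big(a_k^2+\dv_{\T_k}\PP_k\big)$ is also correct and yields the global minimum under Whitham's condition.

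One remark worth making explicit: your chain-rule computation produces the relaxation source with a \emph{negative} sign, namely $-\tfrac{1}{\eps}\alpha_k\rho_k\big(a_k^2+\dv_{\tau_k}\PP_k(\T_k,s_k)\big)(\tau_k-\T_k)^2$, consistent with the Gibbs principle (the relaxation drives $\E_k$ down to its minimum $E_k$). The displayed equation in the proposition carries the opposite sign, which appears to be a typo in the paper; your wording ``governs the dissipation'' shows you have the correct orientation.
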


\bigskip
At the numerical level, a fractional step method is commonly used in the implementation of relaxation methods: the first step is a time-advancing step using the solution of the Riemann problem for the convective part of (\ref{BNrelax_entrop}):
\begin{equation}
\label{BNrelax_entrop_conv}
\dv_t \vect{W} + \dv_x \textbf{g}(\vect{W}) + \textbf{d}(\vect{W})\dv_x \vect{W}=0,
\end{equation}
while the second step consists in an instantaneous relaxation towards
the equilibrium system by imposing $\T_k = \tau_k$ in the solution
obtained by the first step. This second step is equivalent to sending
$\eps$ to $0$ instantaneously. As a consequence, we now focus on constructing an exact Riemann solver for the homogeneous convective system \eqref{BNrelax_entrop_conv}. Let us first state the main mathematical properties of the convective system \eqref{BNrelax_entrop_conv}, the solutions of which are sought in the domain of positive densities $\rho_k$ and positive $\T_k$:
\begin{equation}
\Omega_{\vect{W}}=  \Big \lbrace \vect{W} \in \mathbb{R}^7,
0 < \alpha_1 < 1, \ \alpha_k\rho_k > 0, \  \alpha_k \rho_k \T_k > 0, \text{ for } k \in \lbrace1,2\rbrace \Big \rbrace.
\end{equation}

\begin{prop}
System \eqref{BNrelax_entrop_conv} is weakly hyperbolic on $\Omega_{\vect{W}}$ in the following sense. For all $\vect{W}\in\Omega_{\vect{W}}$, the Jacobian matrix $\textbf{g}'(\vect{W}) + \textbf{d}(\vect{W})$ admits the following real eigenvalues
\begin{equation}
\begin{aligned}
 &\sigma_1(\vect{W}) = \sigma_2(\vect{W}) = \sigma_3(\vect{W})= u_2, \,\sigma_4(\vect{W}) = \sigma_5(\vect{W})= u_1,\\
 &\sigma_6(\vect{W}) = u_1 - a_1 \tau_1, \, \sigma_7(\vect{W}) =  u_1 + a_1 \tau_1,\\
 &\sigma_8(\vect{W})=u_2 - a_2 \tau_2,   \,  \sigma_9(\vect{W})= u_2 + a_2 \tau_2.
\end{aligned}
\end{equation}
All the characteristic fields associated with these eigenvalues are linearly degenerate and the corresponding right eigenvectors are linearly independent if, and only if
\begin{equation}
 \alpha_1 \neq 0, \quad \alpha_2 \neq 0, \quad |u_1-u_2| \neq a_1\tau_1.
\end{equation}
The smooth solutions of system \eqref{BNrelax_entrop_conv} satisfy the following phasic energy equations:
\begin{equation}
\label{local_ener_relax}
 \dv_t(\alpha_k\rho_k\E_k)+\dv_x(\alpha_k\rho_k\E_k u_k+ \alpha_k\pi_k u_k) -u_2\pi_1\dv_x\alpha_k =  0.
\end{equation}
Summing over $k\in\unde$, the smooth solutions are seen to conserve the total mixture energy:
\begin{equation}
 \label{mixt_ener_conv}
 \dv_t\left ( \sum_{k=1}^2\alpha_k\rho_k\E_k \right )+\dv_x \left (\sum_{k=1}^2 \left ( \alpha_k\rho_k\E_k u_k+ \alpha_k\pi_k u_k \right) \right )  =  0.
\end{equation}

\end{prop}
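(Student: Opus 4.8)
The plan is to establish the three assertions---eigenstructure, linear degeneracy together with the independence criterion, and the energy balances---by passing to primitive-type variables in which the relaxation system decouples phase by phase. I would first recast \eqref{BNrelax_entrop_conv} in quasilinear form $\dv_t\vect{W}+A(\vect{W})\dv_x\vect{W}=0$ with $A=\textbf{g}'(\vect{W})+\textbf{d}(\vect{W})$, and change unknowns to $(\alpha_1,u_1,u_2,\tau_1,\tau_2,s_1,s_2,\pi_1,\pi_2)$, using \eqref{BNpress_relax} to trade $\pi_k$ for $\tau_k$. For smooth solutions the mass laws $\dv_t(\alpha_k\rho_k)+\dv_x(\alpha_k\rho_k u_k)=0$ combined with the entropy and $\T_k$ laws show that $s_k$, $\T_k$, and hence $\PP_k(\T_k,s_k)$, are transported at $u_k$, while $\alpha_1$ is transported at $u_2$ since $g_1=0$ and $d_1=u_2$. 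Writing $\mathrm{D}_k=\dv_t+u_k\dv_x$, each phase block then collapses to a Lagrangian acoustic pair, $\mathrm{D}_k u_k=-\tau_k\dv_x\pi_k+(\cdots)$ and $\mathrm{D}_k\tau_k=\tau_k\dv_x u_k+(\cdots)$, where the omitted couplings are proportional to $\dv_x\alpha_1$. Crucially, $\alpha_1$ is advected at $u_2$, so its row in $A$ is purely diagonal and the $\dv_x\alpha_1$ couplings sit only in the $\alpha_1$-column; the characteristic polynomial therefore factors, leaving the eigenvalues of the two uncoupled phase blocks unchanged and giving the nine claimed speeds---$u_2$ with multiplicity three (the $\alpha_1$ transport together with the phase-$2$ entropy and relaxation-contact modes), $u_1$ with multiplicity two (the phase-$1$ entropy and relaxation-contact modes), and the four acoustic speeds $u_k\pm a_k\tau_k$.

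For the linear degeneracy I would compute each right eigenvector $r_i$ and verify $\gradi\sigma_i\cdot r_i=0$; this is immediate for the contact speeds and, for the acoustic fields, follows from the Suliciu structure, since $a_k$ is constant and $u_k\pm a_k\tau_k$ is a Riemann invariant of its own field. The delicate point is the independence criterion. Since the eigenvalues come from the decoupled blocks while the $\dv_x\alpha_1$ coupling modifies only the $\alpha_1$-component of the eigenvectors, independence is decided by that single column. I would assemble the nine eigenvectors and show the family degenerates exactly on $\{\alpha_1=0\}\cup\{\alpha_2=0\}\cup\{|u_1-u_2|=a_1\tau_1\}$: the factors $\alpha_k$ enter the non-conservative column $\textbf{d}$, so a vanishing phase fraction collapses the coupling and merges eigenvectors, whereas $u_2=u_1\pm a_1\tau_1$, i.e. $|u_1-u_2|=a_1\tau_1$, is the resonance at which the $u_2$-contact aligns with a phase-$1$ acoustic field and the two eigenvectors become parallel. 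I expect this resonance and independence bookkeeping, carried out carefully through the non-conservative product, to be the main obstacle.

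The energy identities \eqref{local_ener_relax} follow most economically from Proposition \ref{prop_gibbs}: its left-hand side is exactly the convective part of the balance, and the source $\tfrac1\eps\alpha_k\rho_k(a_k^2+\dv_{\tau_k}\PP_k)(\tau_k-\T_k)^2$ is produced entirely by the relaxation term $\tfrac1\eps\mathcal{R}$; since \eqref{BNrelax_entrop_conv} drops that term, the right-hand side vanishes. Alternatively, a direct derivation uses $\dv_t(\alpha_k\rho_k\E_k)+\dv_x(\alpha_k\rho_k\E_k u_k)=\alpha_k\rho_k\,\mathrm{D}_k\E_k$ together with $\mathrm{D}_k s_k=\mathrm{D}_k\T_k=0$, so that the definition \eqref{ener_relax_def} collapses to $\mathrm{D}_k\E_k=u_k\,\mathrm{D}_k u_k-\pi_k\,\mathrm{D}_k\tau_k$; substituting the momentum and specific-volume balances and rearranging the $\dv_x\alpha_1$ terms then reproduces $-\dv_x(\alpha_k\pi_k u_k)+u_2\pi_1\dv_x\alpha_k$. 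Finally, summing \eqref{local_ener_relax} over $k\in\unde$ and using $\dv_x(\alpha_1+\alpha_2)=0$ cancels the non-conservative contributions $-u_2\pi_1\dv_x(\alpha_1+\alpha_2)$, yielding the conservative mixture-energy law \eqref{mixt_ener_conv}.
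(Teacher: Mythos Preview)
The paper states this proposition without proof; it is treated as a routine structural fact about Suliciu-type relaxation models, in the same spirit as the analogous statement for the isentropic version in \cite{CHSS}. Your outline is therefore not being compared against an existing argument but stands on its own, and it is correct.

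A few remarks on the details. Your choice of primitive variables $(\alpha_1,u_k,\pi_k,s_k,\T_k)$ is the efficient one: because $\T_k$ and $s_k$ are transported at $u_k$, the relation \eqref{BNpress_relax} gives $\mathrm{D}_k\pi_k=-a_k^2\mathrm{D}_k\tau_k$, and each phase reduces to a $2\times 2$ $p$-system block in $(u_k,\pi_k)$ with constant acoustic impedance $a_k$, coupled to the rest only through the $\dv_x\alpha_1$ column. The block-triangular structure you describe then yields the nine eigenvalues exactly as you count them, and linear degeneracy of the acoustic fields follows because $u_k\pm a_k\tau_k = u_k \pm a_k\T_k \pm (\PP_k(\T_k,s_k)-\pi_k)/a_k$ has gradient $(1,\pm 1/a_k)$ in $(u_k,\pi_k)$, orthogonal to the eigenvector $(1,\mp a_k)$.

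On the independence criterion, your heuristic is right but the mechanism deserves a sharper statement. The loss of a full eigenbasis at $|u_1-u_2|=a_1\tau_1$ is a genuine eigenvalue collision ($u_2$ meets $u_1\pm a_1\tau_1$) producing a Jordan block, exactly as in the equilibrium model of Proposition~\ref{propspectrebn}. The degeneracy at $\alpha_k=0$ is of a different nature: in conservative coordinates the acoustic eigenvectors of phase $k$ carry an overall factor $\alpha_k$ (all nontrivial components live in $\alpha_k\rho_k,\alpha_k\rho_k u_k,\ldots$), so they collapse to zero rather than merge with another direction. Either way the rank drops, which is all the statement asserts.

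Your two routes to \eqref{local_ener_relax} are both valid. Invoking Proposition~\ref{prop_gibbs} is the shortest, since the relaxation source there is produced solely by $\tfrac{1}{\eps}\mathcal{R}(\vect{W})$ and vanishes for the homogeneous system \eqref{BNrelax_entrop_conv}. Your direct computation is also correct and in fact proves Proposition~\ref{prop_gibbs} along the way; the key identity $\mathrm{D}_k\E_k=u_k\,\mathrm{D}_ku_k-\pi_k\,\mathrm{D}_k\tau_k$ is exactly what the Suliciu energy \eqref{ener_relax_def} is designed to produce once $\mathrm{D}_k\T_k=\mathrm{D}_ks_k=0$. The final cancellation $\dv_x\alpha_1+\dv_x\alpha_2=0$ giving \eqref{mixt_ener_conv} is immediate.
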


\begin{rem}
 In the definition of $\Omega_{\vect{W}}$, the space of admissible states for the solutions of system \eqref{BNrelax_entrop_conv}, no positivity requirement has been given for the phasic specific entropies $s_k$. However, since all the waves are linearly degenerate, the weak solutions are expected to obey a maximum principle on the specific entropies since these two quantities are simply advected:
 \begin{equation}
 \label{ent_adv}
  \dv_t s_k + u_k \dv_x s_k = 0, \qquad  \text{for $k=\unde$}.
 \end{equation}
\end{rem}

\begin{rem}
We look for subsonic solutions which are solutions that remain in the domain of $\Omega_{\vect{W}}$ where $ |u_1-u_2| < a_1\tau_1$. Here again, one never has $\alpha_1=0$ or $\alpha_2=0$. However,  $\alpha_k=0$ is to be understood in the sense $\alpha_k\to 0$.
\end{rem}

\begin{rem}
Since all the characteristic fields of system \eqref{BNrelax_entrop_conv} are linearly degenerate, the mixture energy equation \eqref{mixt_ener_conv} is expected to be satisfied for not only smooth but also weak solutions. However, in the stiff cases of vanishing phases where one of the left or right phase fractions $\alpha_{k,L}$ or $\alpha_{k,R}$ is close to zero, ensuring positive values of the densities requires an extra dissipation of the mixture energy by the computed solution (see the comments on Definition \ref{def_sol} below).
\end{rem}

%%%%%%%%%%%%%%%%%%%%%%%%%%%%%%%%%%%%%%%%%%%%%%%%%%%%%%%%%%%%%%%%%%%%%%%%%%%%%%%
%%%%%%%%%%%%%%%%%%%%%%%%%%%%%%%%%%%%%%%%%%%%%%%%%%%%%%%%%%%%%%%%%%%%%%%%%%%%%%%
%%%% SOLVEUR DE RIEMANN BAROTROPE    %%%%%%%%%%%%%%%%%%%%%%%%%%%%%%%%%%%%%%%%%%
%%%%%%%%%%%%%%%%%%%%%%%%%%%%%%%%%%%%%%%%%%%%%%%%%%%%%%%%%%%%%%%%%%%%%%%%%%%%%%%
%%%%%%%%%%%%%%%%%%%%%%%%%%%%%%%%%%%%%%%%%%%%%%%%%%%%%%%%%%%%%%%%%%%%%%%%%%%%%%%

\subsection{The relaxation Riemann problem}
\label{subsec_Riemsol}
Let $(\vect{W}_L,\vect{W}_R)$ be two elements of $\Omega_{\vect{W}}$.
We now consider the Cauchy problem for \eqref{BNrelax_entrop_conv} with the following Riemann type initial data:
\begin{equation}
\label{relax_CI}
\vect{W}(x,0) =\left \lbrace
	\begin{array}{ll}
	\vect{W}_{L} \quad \textnormal{if} \quad x<0,\\
	\vect{W}_{R} \quad \textnormal{if} \quad x>0.
	\end{array}
\right.
\end{equation}
%The initial states $(\vect{W}_L,\vect{W}_R)$ are two elements of $\Omega_{\vect{W}}$ which are assumed to be \textit{at equilibrium} which means that the additional unknowns $\T_k$ are initialised with the values of the initial specific volumes : $\T_{k,L}=\tau_{k,L}$ and $\T_{k,R}=\tau_{k,R}$ for $k\in\unde$.

% As required by the numerical method (see section \ref{sec_duality}), the initial states $(\vect{W}_L,\vect{W}_R)$ considered here are assumed to be \textit{at equilibrium} which means that $\T_{k,L}=\tau_{k,L}$ and $\T_{k,R}=\tau_{k,R}$ for $k\in\unde$.

Extending the relaxation Riemann solution computed in \cite[Section 3]{CHSS} for the isentropic setting to the present Riemann problem \eqref{BNrelax_entrop_conv}-\eqref{relax_CI} follows from the crucial observation that both the relaxation specific volume $\T_k$ and the specific entropy $s_k$ are advected in the same way by the phasic flow velocity $u_k$:
\begin{equation*}
\left\lbrace
          \begin{array}{ll}
\dv_t \T_k + u_k \dv_x \T_k = 0, \\
\dv_t s_k + u_k \dv_x s_k = 0. \\
\end{array}
        \right.
\end{equation*}
Therefore, for self-similar initial data, the Riemann solution, as soon as it exists, necessarily obeys
\begin{equation}
\label{chap2bn_valuestE}
\T_k(\xi) =
\left\lbrace
          \begin{array}{ll}
\T_{k,L}, \quad \xi < u^*_k \\
\T_{k,R}, \quad  u^*_k < \xi,
\end{array}
        \right.
\quad
s_k(\xi) =
\left\lbrace
          \begin{array}{ll}
s_{k,L}, \quad \xi < u^*_k \\
s_{k,R}, \quad  u^*_k < \xi,
\end{array}
        \right.
% \quad
% \phi(\T_k,s_k)(\xi) =
% \left\lbrace
%           \begin{array}{ll}
% \phi_L, \quad \xi < u^*_k \\
% \phi_R, \quad  u^*_k < \xi,
% \end{array}
%         \right.
\end{equation} 
where $\xi=x/t$ is the self-similar variable, and $u_k^*$ is the effective propagation speed associated with the eigenvalue $u_k$ in the Riemann solution. Furthermore, any given combination of these variables, say $\phi(\T_k,s_k)$, is also advected by $u_k$. Hence, we obtain from (\ref{chap2bn_valuestE}), that the non-linear laws arising from the equation of state evolve in the Riemann solution, virtually the same way as within the isentropic setting. Indeed, the entropies $s_k$ in the relaxation model \eqref{BNrelax_entrop_conv} and in the associated energies \eqref{ener_relax_def}, are systematically involved in non-linear functions already depending on the variable $\T_k$ : namely $\mathcal{P}_k(\T_k,s_k)$ and  $e_k(\T_k,s_k)$. Such functions are solely evaluated on the left and right states in the self-similar initial data and hence always contribute to any given jump conditions in terms of $\PP_k(\T_{k,L},s_{k,L})$, $e_k(\T_{k,L},s_{k,L})$, $\PP_k(\T_{k,R},s_{k,R})$ or $e_k(\T_{k,R},s_{k,R})$. For instance, computing the value of the linearized pressure $\pi_k(\xi)$ at some point $\xi$ of the Riemann fan goes as follows:
$$
\pi_k(\xi)=\pi_k(\tau_k(\xi),\T_k(\xi),s_k(\xi)) = \PP_k(\T_k(\xi),s_k(\xi))+a_k^2(\T_k(\xi)-\tau_k(\xi)),
$$
where $\PP_k(\T_k(\xi),s_k(\xi))=\PP_k(\T_{k,L},s_{k,L})$ if $\xi<u_k^*$ and  $\PP_k(\T_k(\xi),s_k(\xi))=\PP_k(\T_{k,R},s_{k,R})$ otherwise, whereas in the isentropic setting, one would have $\PP_k(\T_{k,L})$ or $\PP_k(\T_{k,R})$. The same observations can be made for the internal energy $e_k(\T_k,s_k)$ when computing the total energy $\mathcal{E}_k(u_k(\xi),\tau_k(\xi),\T_k(\xi),s_k(\xi))$.
Hence, compared to the isentropic case, it is just as if the relaxation unknown $\T_k$ is replaced by a two-dimensional vector $(\T_k,s_k)$. 
\medskip
We formalize these observations in
\begin{prop}
\label{prop_lien_isent}
Let $(\vect{W}_L,\vect{W}_R)\in \Omega_{\vect{W}} \times \Omega_{\vect{W}}$. The Riemann problem \eqref{BNrelax_entrop_conv}-\eqref{relax_CI} admits a solution if, and only if, the isentropic Riemann problem obtained when taking constant initial entropies $s_{k,L}=s_{k,R},\,k\in\unde$ while keeping the other initial data unchanged, admits a solution. When such a solution exists,
the mathematical formulae for defining the phasic quantities $\tau_k,u_k,\pi_k,e_k$ and the void fraction $\alpha_k$ within the Riemann fan read exactly the same as in the isentropic framework \cite[Section 3]{CHSS}, provided the following replacements:
\begin{equation}
\begin{array}{ll}
 \PP_k(\T_{k,L}) \longrightarrow \PP_k(\T_{k,L},s_{k,L}),  & \PP_k(\T_{k,R}) \longrightarrow \PP_k(\T_{k,R},s_{k,R}),  \\
  e_k(\T_{k,L}) \longrightarrow e_k(\T_{k,L},s_{k,L}),     &  e_k(\T_{k,R}) \longrightarrow e_k(\T_{k,R},s_{k,R}). 
\end{array}
\end{equation}
\end{prop}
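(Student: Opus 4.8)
The plan is to exhibit an exact correspondence between the Riemann fan of \eqref{BNrelax_entrop_conv}--\eqref{relax_CI} and that of its isentropic counterpart in \cite{CHSS}, the bridge being the observation that the specific entropy $s_k$ is transported \emph{in exactly the same manner} as the relaxation specific volume $\T_k$. Concretely, I would start from the two advection laws
\begin{equation*}
\dv_t \T_k + u_k\dv_x \T_k = 0, \qquad \dv_t s_k + u_k\dv_x s_k = 0, \qquad k\in\unde,
\end{equation*}
which both follow from the conservative equations for $\alpha_k\rho_k\T_k$ and $\alpha_k\rho_k s_k$ together with the mass balance. Since the two quantities obey the \emph{same} linear transport equation with the same velocity field $u_k$, the pair $(\T_k,s_k)$ may be regarded as a single vector-valued relaxation variable advected by $u_k$, exactly playing the role that $\T_k$ alone plays in the isentropic construction.

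The key structural step is then to read off the Riemann invariants. Because all characteristic fields of \eqref{BNrelax_entrop_conv} are linearly degenerate, the self-similar solution is a succession of contact discontinuities, and across each wave the jump relations reduce to the constancy of the associated Riemann invariants. From the transport equation above, $s_k$ (like $\T_k$) is a Riemann invariant of every field except the one of speed $u_k$; in particular it is continuous across the two acoustic waves of each phase and across the $u_2$-contact carrying the jump of $\alpha_1$. Hence, for self-similar data, $s_k$ is piecewise constant with a single discontinuity located at $\xi=u_k^*$, precisely where $\T_k$ jumps, so that
\begin{equation*}
\big(\T_k(\xi),s_k(\xi)\big)=
\begin{cases}
(\T_{k,L},s_{k,L}), & \xi<u_k^*,\\[2pt]
(\T_{k,R},s_{k,R}), & \xi>u_k^*.
\end{cases}
\end{equation*}

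With this in hand I would verify that the entropy enters the construction \emph{only} through the two nonlinear combinations $\PP_k(\T_k,s_k)$ and $e_k(\T_k,s_k)$ appearing in the flux $\textbf{g}$, in the linearized pressure \eqref{BNpress_relax}, and in the energy \eqref{ener_relax_def}. By the previous step these combinations take throughout the fan only the two frozen values $\PP_k(\T_{k,L},s_{k,L}),\,\PP_k(\T_{k,R},s_{k,R})$ and $e_k(\T_{k,L},s_{k,L}),\,e_k(\T_{k,R},s_{k,R})$, determined once and for all by the initial data. Consequently, every algebraic relation governing $\tau_k,u_k,\pi_k,e_k$ and $\alpha_k$ in the Riemann fan is formally identical to the corresponding relation in the isentropic solver of \cite[Section 3]{CHSS}, where $\PP_k$ and $e_k$ depend on $\T_k$ alone, after the substitutions stated in the proposition. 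In particular, the scalar solvability condition characterizing the existence of the isentropic Riemann solution is left unchanged by these substitutions, since it constrains only $\tau_k,u_k,a_k,\alpha_k$ and the frozen pressure and energy values. Taking $s_{k,L}=s_{k,R}$ returns literally the isentropic problem, which yields the claimed equivalence of solvability; and when a solution exists, the formulae transfer verbatim with the indicated replacements.

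I expect the main obstacle to be the careful treatment of the $u_2$-contact, where the non-conservative coupling through $\PP_1$ mixes the two phases and where $\alpha_1$ jumps: one must confirm that $s_1$ (and $\T_1$) is genuinely a Riemann invariant of the triple eigenvalue $u_2$, so that the phase-$1$ coupling across this wave is driven exactly by the frozen value $\PP_1(\T_1,s_1)$ and therefore reproduces the isentropic coupling identically. Once this is established, together with the analogous verification for the acoustic and $u_1$ fields, the reduction is complete and the detailed algebra may simply be imported from \cite{CHSS}.
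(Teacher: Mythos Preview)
Your proposal is correct and follows essentially the same route as the paper: the argument preceding the proposition in Section~\ref{subsec_Riemsol} is precisely that $\T_k$ and $s_k$ obey the same advection law $\dv_t(\cdot)+u_k\dv_x(\cdot)=0$, so that the pair $(\T_k,s_k)$ is piecewise constant with its only jump at $\xi=u_k^*$, and hence every occurrence of $\PP_k(\T_k)$ or $e_k(\T_k)$ in the isentropic Riemann fan is replaced by the corresponding frozen value $\PP_k(\T_{k,L/R},s_{k,L/R})$ or $e_k(\T_{k,L/R},s_{k,R})$. Your additional care about the Riemann-invariant structure at the $u_2$-contact is a welcome elaboration, but it does not change the substance of the argument.
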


\medskip
In the following definition, we recall the main features of a solution to the Riemann problem (\ref{BNrelax_entrop_conv})-(\ref{relax_CI}).

\begin{defi}
 \label{def_sol}
Let $(\vect{W}_{L},\vect{W}_{R})$ be two states in $\Omega_{\vect{W}}$. A solution to the Riemann problem (\ref{BNrelax_entrop_conv})-(\ref{relax_CI}) \textbf{with subsonic wave ordering} is a self-similar mapping $\vect{W}(x,t)=\vect{W}_r(x/t;\vect{W}_{L},\vect{W}_{R})$ where the function $\xi \mapsto \vect{W}_r(\xi;\vect{W}_{L},\vect{W}_{R})$ satisfies the following properties: 
\begin{enumerate}
 \item[(i)] $\vect{W}_r(\xi;\vect{W}_{L},\vect{W}_{R})$ is a piecewise constant function, composed of (at most) seven intermediate states belonging to $\Omega_{\vect{W}}$, separated by (at most) six contact discontinuities associated with the eigenvalues $u_1 \pm a_1 \tau_1$, $u_2 \pm a_2 \tau_2$, $u_1$, $u_2$ and such that
\begin{equation}
\label{limits}
\begin{aligned}
 & \xi < \min\limits_{k\in\lbrace1,2\rbrace} \left \lbrace u_{k,L}-a_{k}\tau_{k,L}\right \rbrace \Longrightarrow 
 \vect{W}_r(\xi;\vect{W}_{L},\vect{W}_{R})=\vect{W}_L, \\
 & \xi > \max\limits_{k\in\lbrace1,2\rbrace} \left \lbrace u_{k,R}+a_{k}\tau_{k,R}\right \rbrace \Longrightarrow \vect{W}_r(\xi;\vect{W}_{L},\vect{W}_{R})=\vect{W}_R.
\end{aligned}
\end{equation}
 \item[(ii)] There exists two real numbers $u_2^*$ and $\pi_1^*$ (depending on $(\vect{W}_L,\vect{W}_R)$) such that the function $\vect{W}(x,t)=\vect{W}_r(x/t;\vect{W}_{L},\vect{W}_{R})$ satisfies the following PDEs in the distributional sense: for $k\in\unde$,
\begin{align}
&\dv_t \alpha_k + u_2^* \dv_x \alpha_k = 0, \label{Walpha}\\
&\dv_t (\alpha_k \rho_k) + \dv_x (\alpha_k \rho_k u_k) = 0, \label{Wmass}\\
&\dv_t (\alpha_k \rho_k u_k) + \dv_x (\alpha_k \rho_k u_k^2 + \alpha_k ) - \pi_1^* \dv_x \alpha_k =0, \label{Wmomentum}\\
&\dv_t (\alpha_k \rho_k s_k) + \dv_x (\alpha_k \rho_k s_k u_k )=0, \label{Wentropy}\\
&\dv_t (\alpha_k \rho_k \T_k) + \dv_x (\alpha_k \rho_k \T_k u_k )=0, \label{WT}
\end{align}
where $\dv_x \alpha_k$ identifies with the Dirac measure $\Delta \alpha_k \delta_{x-u_2^*t}$, with $\Delta \alpha_k=\alpha_{k,R}-\alpha_{k,L}$.
 
\item[(iii)] Furthermore, the function $\vect{W}(x,t)=\vect{W}_r(x/t;\vect{W}_{L},\vect{W}_{R})$ also satisfies the following energy equations in the distributional sense:
\begin{align}
&\dv_t (\alpha_2 \rho_2 \E_2) + \dv_x (\alpha_2 \rho_2 \E_2 u_2 + \alpha_2 \pi_2 u_2) - u_2^*\pi_1^* \dv_x \alpha_2 =0,\\
&\dv_t (\alpha_1 \rho_1 \E_1) + \dv_x (\alpha_1 \rho_1 \E_1 u_1 + \alpha_1 \pi_1 u_1) - u_2^*\pi_1^* \dv_x \alpha_1 = -\mathcal{Q}(u_2^*,\vect{W}_L,\vect{W}_R)\delta_{x-u_2^*t}, \label{relax_phase1_diss}
\end{align}
where $\mathcal{Q}(u_2^*,\vect{W}_L,\vect{W}_R)$ is a non-negative number. 

\item[(iv)] The solution has a \textbf{subsonic wave ordering} in the following sense: 
\begin{equation}
 \label{chap2subsol}
 u_{1,L}-a_1\tau_{1,L} < u_2^* < u_{1,R}+a_1\tau_{1,R}.
\end{equation}
\end{enumerate}
\end{defi}

\medskip
Before stating the existence theorem for subsonic solutions proved in \cite[Section 3]{CHSS}, let us introduce some notations built on the initial states $(\vect{W}_L,\vect{W}_R)$ and on the relaxation parameters $(a_1,a_2)$. For $k$ in $\lbrace 1,2 \rbrace$,
\begin{eqnarray}
\label{chap3nota}
\udd_k &:=& \dfrac{1}{2} \left (u_{k,L}+u_{k,R} \right )-\dfrac{1}{2a_k} \left (\pi_k(\tau_{k,R},\T_{k,R},s_{k,R}) - \pi_k(\tau_{k,L},\T_{k,L},s_{k,L}) \right ),
\label{chap3wdd}\\
\pidd_k &:=& \dfrac{1}{2} \left ( \pi_k(\tau_{k,R},\T_{k,R},s_{k,R}) + \pi_k(\tau_{k,L},\T_{k,L},s_{k,L}) \right )- \dfrac{a_k}{2} \left (u_{k,R}- u_{k,L} \right ),
\label{chap3pdd}\\
\tdd_{k,L} &:=& \tau_{k,L} + \dfrac{1}{a_k}(\udd_k - u_{k,L}), 
% = \tau_{k,L}+\dfrac{1}{2a_k}(u_{k,R} - u_{k,L}) - \dfrac{1}{2a_k^2}(p_k(\tau_{k,R}) - p_k(\tau_{k,L})),
\label{chap3tddl}\\
\tdd_{k,R} &:=& \tau_{k,R} - \dfrac{1}{a_k}(\udd_k - u_{k,R}).
%= \tau_{k,R}+\dfrac{1}{2a_k}(u_{k,R} - u_{k,L}) + \dfrac{1}{2a_k^2}(p_k(\tau_{k,R}) - p_k(\tau_{k,L})). \label{chap3tddr}
\end{eqnarray} 
% These quantities only depend on the parameters $(a_1,a_2)$ and on the velocities and densities of the initial states (in particular, they are independent of the initial phase fractions $\alpha_{1,L}$ and $\alpha_{1,R}$). 
We also introduce the following dimensionless number that only depends on the initial phase fractions:
\begin{equation}
\label{chap3notabis}
\Lambda^\alpha:=\dfrac{\alpha_{2,R}-\alpha_{2,L}}{\alpha_{2,R}+\alpha_{2,L}}. % \in (-1,1).
\end{equation}

\medskip
We may now state the existence result for the Riemann problem \eqref{BNrelax_entrop_conv}-\eqref{relax_CI}, which is directly inferred from the existence Theorem for the solutions to the relaxation Riemann problem for the isentropic case designed in \cite[Section 3]{CHSS}. The construction of the self-similar solution is fully provided in Appendix \ref{constr_sol}.
\begin{them}
\label{relax_thm}
Given a pair of admissible initial states $(\vect{W}_{L},\vect{W}_{R}) \in \Omega_{\vect{W}} \times \Omega_{\vect{W}}$, assume that the parameter $a_k$ is such that $\tdd_{k,L} >0$ and $\tdd_{k,R} > 0$ for $k$ in $\lbrace 1,2 \rbrace$. Then there exists solutions with subsonic wave ordering to the Riemann problem \eqref{BNrelax_entrop_conv}-\eqref{relax_CI}, in the sense of Definition \ref{def_sol}, if the following condition holds:
\begin{equation*}
  \B \qquad -a_1\tdd_{1,R} < \dfrac{\udd_1-\udd_2-\frac{1}{a_2}\Lambda^{\alpha}(\pidd_1-\pidd_2)}{1+\frac{a_1}{a_2}|\Lambda^{\alpha}|}< a_1\tdd_{1,L}.
\end{equation*}
\end{them}

\begin{proof}
Following Proposition \ref{prop_lien_isent}, see \cite[Section 3]{CHSS} for a constructive proof and the remarks below. See Appendix \ref{constr_sol} for the expressions of the intermediate states of the solution.
\end{proof}

\paragraph{Some comments on Definition \ref{def_sol} and Theorem \ref{relax_thm}:}
\begin{enumerate}
\item Assumption $\B$ \textbf{can be explicitly tested} in terms of the initial data and the parameters $a_k,\,k\in\lbrace1,2\rbrace$. The quantities $a_1\tdd_{1,L}$ and $a_1\tdd_{1,R}$ can be seen as two sound propagation speeds, while the quantity $(\udd_1-\udd_2-\frac{1}{a_2}\Lambda^{\alpha}(\pidd_1-\pidd_2))/(1+\frac{a_1}{a_2}|\Lambda^{\alpha}|)$, which has the dimension of a velocity, measures the difference between the pressures and kinematic velocities of the two phases, in the initial data. Observe that if the initial data is close to the pressure and velocity equilibrium between the two phases, this quantity is expected to be small compared to $a_1\tdd_{1,L}$ and $a_1\tdd_{1,R}$. This is actually the case when, in addition to the convective system \eqref{BN_ener}, zero-th order source terms are added to the model in order to account for relaxation phenomena that tend to bring the two phases towards thermodynamical ($T_1=T_2$), mechanical ($u_1=u_2$ and $p_1=p_2$) and chemical equilibria (see \cite{CGHS,FH} for the models and \cite{HH,LDGH} for adapted numerical methods).

\item The quantity $u_2^*$ is the propagation velocity of the phase fraction wave. It is computed as the zero of a monotone real function $z\mapsto \Psi_{(\vect{W}_L,\vect{W}_R)}(z)$ on a bounded interval. Assumption $\B$ is a sufficient and necessary condition for this function $\Psi_{(\vect{W}_L,\vect{W}_R)}$ to have a unique zero (\textit{i.e.} a unique number $u_2^*$ satisfying $\Psi_{(\vect{W}_L,\vect{W}_R)}(u_2^*)=0$). Hence, solving this fixed-point problem enables to locate the phase fraction wave by coupling two monophasic systems. Let us stress again on the fact this fixed-point problem is very easy to solve numerically, since it boils down to searching the zero of a strictly monotone function on a bounded interval. We refer the reader to equation \eqref{chap3mel} in Appendix \ref{constr_sol} and to the paper \cite{CHSS} for more details.

The phase fraction derivative $\dv_x \alpha_1$ identifies with the Dirac measure  $\Delta \alpha_1 \delta_0(x-u_2^*t)$. This means that for all open subset $\omega\subset \R$ and for any self-similar function $g(x,t)=g_r(x/t)$, one has :
 \[
 \int_{(\xi,t)\in\omega\times\R^+}  \dv_x \alpha_1(\xi t,t)g(\xi t,t){\rm d} \xi \dt =
  \left \lbrace
  \begin{array}{ll}
      g_r(u_2^*), & \quad \text{if $u_2^*\in\omega$},\\
      0, & \quad \text{otherwise}.
  \end{array}
 \right.
 \]
Item \textit{(ii)} implies that, away from the $u_2$-wave, the system behaves as two independent relaxation systems, one for each phase.

%\item Summing \eqref{Wmomentum} over $k\in\unde$ yields the conservation of the total momentum.

\item \textbf{Positivity of phase 1 densities.} If the ratio $\frac{\alpha_{1,L}}{\alpha_{1,R}}$ is in a neighborhood of $1$, the solution computed thanks to condition $\B$ has positive densities and satisfies the phasic energy equations \eqref{local_ener_relax} in the weak sense. In this case, the solution is said to be energy-preserving and the total mixture energy is also conserved according to the conservative equation \eqref{mixt_ener_conv}. If  $\frac{\alpha_{1,L}}{\alpha_{1,R}}$ is too large, or too small, depending on the wave ordering between $u_2^*$ and $u_1^*$, the solution computed thanks to condition $\B$ may have non-positive densities in phase 1. In such stiff cases, ensuring positive densities for phase $1$ is recovered by allowing a strict dissipation of the phase 1 energy:
\begin{equation}
  \dv_t \left (\alpha_1 \rho_1 \mathcal{E}_1 \right ) + \dv_x \left (  \alpha_1 \rho_1\mathcal{E}_1 + \alpha_1 \pi_1 \right) u_1 - u_2^* \pi_1^*\dv_x \alpha_1 = -\mathcal{Q}(u_2^*,\vect{W}_L,\vect{W}_R) \delta_{x-u_2^*t},
\end{equation}
where $\mathcal{Q}(u_2^*,\vect{W}_L,\vect{W}_R)<0$. The function $\mathcal{Q}(u_2^*,\vect{W}_L,\vect{W}_R)$ is a \textbf{\textit{kinetic relation}} which is chosen large enough so as to impose the positivity of all the phase 1 densities. The value of $\mathcal{Q}(u_2^*,\vect{W}_L,\vect{W}_R)$ parametrizes the whole solution and the choice of $\mathcal{Q}(u_2^*,\vect{W}_L,\vect{W}_R)$ prescribes a unique solution.

\item \textbf{Positivity of phase 2 densities.} Assumption $\B$ allows to compute the value of the wave propagation velocity $u_2^*$ (see comment 2). With this value, one has to \textit{verify} that the following property, which is equivalent to the positivity of the phase $2$ densities, is satisfied:
\begin{equation}
 \C \qquad \udd_2-a_2\tdd_{2,L} < u_2^* < \udd_2+a_2\tdd_{2,R}. \qquad \qquad \qquad  \
\end{equation}
In the numerical applications using this Riemann solver (see Section \ref{numtest}), it will always be possible to ensure property $\C$ by taking a large enough value of the relaxation parameter $a_2$ (see Appendix \ref{choixa1a2}). Note that this condition is a monophasic condition which is not related to the two-fluid modeling. Indeed, the same condition is required when approximating Euler's equations with a similar relaxation scheme.   

\item \textbf{Maximum principle for the entropies.} The phasic entropies $s_k,\, k \in \unde$ satisfy a maximum principle in the solution since they are simply advected by the phasic velocities according to \eqref{ent_adv}.
%Hence, since at the initial time $s_{k,L}>0$ and $s_{k,R}>0$ for $k\in\unde$, one has $s_k(\xi) > 0$ everywhere in the Riemann fan, for $k \in \unde$.
 
\item For the applications envisioned for this work, such as nuclear flows, we are only interested in solutions which have a \textit{subsonic wave ordering}, \ie\, solutions for which the propagation velocity $u_2^*$ of the phase fraction $\alpha_1$ lies in-between the acoustic waves of phase $1$, which is what is required in item \textit{(iv)}. However, the considered solutions are allowed to have \textbf{phasic supersonic speeds} $|u_k|>a_k \tau_k$. Indeed, the subsonic property considered here is related to the \textbf{relative velocity} $u_1-u_2$ with respect to the phase 1 speed of sound $a_1\tau_1$.

\end{enumerate}

\subsection{The relaxation scheme for the auxiliary model}
\label{subsec_scheme_relax}

In this section, the exact Riemann solver $\vect{W}_r(\xi;\vect{W}_{L},\vect{W}_{R})$ for the relaxation system \eqref{BNrelax_entrop_conv} is used to derive an approximate Riemann solver of Harten, Lax and van Leer \cite{HLL} for the simulation of the auxiliary system (\ref{BN_entrop}). The aim is to approximate the admissible weak solution of a Cauchy problem associated with system (\ref{BN_entrop}):
\begin{equation}
\label{aux_cauchy0}
\left \lbrace
          \begin{array}{ll}
		\dv_t \vect{U} + \dv_x {\bf \Fbb}(\vect{U}) + {\bf \Cbb}(\vect{U})\dv_x \vect{U} = 0, & x \in \R, t>0, \\
		 \vect{U}(x,0) =\vect{U}_0(x), &  x \in \R,
	   \end{array}
\right.
\end{equation}
with a discretization which provides discrete counterparts of the energy inequalities \eqref{BN_ener_kneq} satisfied by the exact solutions of the auxiliary model. As expected, the numerical scheme is identical to the relaxation scheme designed in \cite{CHSS} for the isentropic model.
\medskip

We define a time and space discretization as follows: for simplicity in the notations, we assume constant positive time and space steps $\Delta
t$ and $\Delta x$, and we define $\lambda=\frac{\Delta
t}{\Delta x}$. The space is partitioned into cells $\R=\bigcup_{j
\in \Z}  C_j$ where $C_j= [x_{j-\frac{1}{2}},x_{j+\frac{1}{2}}[$ with
$x_{j+\frac{1}{2}}=(j+\frac 12) \Delta x$ for all $j$ in $\Z$. The centers of the cells are denoted $x_{j}=j \Delta x$ for all $j$ in $\Z$. We also introduce the discrete intermediate times $t^{n}=n\Delta t, \ n
\in \xN$. The approximate solution at time $t^{n}$, $x \in \R \mapsto
\vect{U}_{\lambda}(x,t^{n}) \in \Omega$ is a piecewise constant function whose
value on each cell $C_{j}$ is a 
constant value denoted by $\vect{U}_{j}^{n}$. Since $\vect{U}_{\lambda}(x,t^{n})$ is piecewise constant, the exact solution of the following Cauchy problem at time $t^n$ 
\begin{equation}
\label{aux_cauchy}
\left \lbrace
          \begin{array}{ll}
		\dv_t \vect{U} + \dv_x {\bf \Fbb}(\vect{U}) + {\bf \Cbb}(\vect{U})\dv_x \vect{U} = 0, & x \in \R, t>0, \\
		 \vect{U}(x,0) =\vect{U}_{\lambda}(x,t^{n}), &  x \in \R,
	   \end{array}
\right.
\end{equation}
is obtained by juxtaposing the solutions of the Riemann problems set at each cell interface $x_{j+\frac 12}$, provided that these Riemann problems do not interact. The relaxation approximation is an approximate Riemann solver which consists in defining:
\[\ds \vect{U}_{j}^{n+1}:=\frac{1}{\Delta x}\int_{x_{j-\frac 12}}^{x_{j+\frac 12}}\vect{U}_{app}(x,\Delta t)\dx, \qquad j\in\Z,
\]
where $\vect{U}_{app}(x,t)$ is the following approximate solution of \eqref{aux_cauchy}:
\begin{equation}
\label{app}
 \vect{U}_{app}(x,t):=\sum_{j\in\Z} \mathscr{P} \vect{W}_r\left (\frac{x-x_{j+\frac 12}}{t};\mathscr{M}(\vect{U}_{j}^n),\mathscr{M}(\vect{U}_{j+1}^n) \right ) \Ind_{[x_j,x_{j+1}]}(x),
\end{equation}
where $\Ind_{[x_j,x_{j+1}]}$ is the characteristic function of the interval $[x_j,x_{j+1}]$ and the mappings $\mathscr{P}$ and $\mathscr{M}$ are defined by:
\begin{align}
& \mathscr{M}: \left \lbrace \begin{array}{cccl}
		  &  \R^{7} & \longrightarrow & \R^{9}  \\
		  &  (x_k)_{k=1,..,7} & \longmapsto & (x_1,x_2,x_3,x_4,x_5,x_6,x_7,x_1,1-x_1). 
\end{array} \right. \\
& \mathscr{P}: \left \lbrace \begin{array}{cccl}
		  &  \R^{9} & \longrightarrow & \R^{7}  \\
		  &  (x_k)_{k=1,..,9} & \longmapsto & (x_1,x_2,x_3,x_4,x_5,x_6,x_7). 
\end{array} \right.
\end{align}
For a given vector $\vect{U}$, $\vect{W}=\mathscr{M}(\vect{U})$ is the relaxation vector obtained by keeping $\alpha_k$, $\alpha_k\rho_k$, $\alpha_k\rho_k u_k$ and $\alpha_k\rho_k s_k$ unchanged, while setting  $\T_k$ to be equal to $\tau_k$. One says that $\vect{W}\in \Omega_{\vect{W}}$ is at equilibrium if there exists $\vect{U}\in \Omega_{\vect{U}}$ such that $\vect{W}=\mathscr{M}(\vect{U})$. For a given relaxation vector $\vect{W}$,  $\vect{U}=\mathscr{P} \vect{W}$ is the projection of $\vect{W}$ which consists in dropping the relaxation unknowns $\T_k$.

\medskip
In order for the interface Riemann problems not to interact and thus for $\vect{U}_{app}(x,t)$ to be a correct approximate solution of \eqref{aux_cauchy} at time $\Delta t$, the time step $\Delta t$ is chosen small enough so as to satisfy the CFL condition
\begin{equation}
\label{chap2cfl}
\frac{\Delta t}{\Delta x} \ \underset{k\in\lbrace 1,2\rbrace, j \in \Z} {\max} \max \left \lbrace |
(u_{k}-a_{k}\tau_{k})^n_j|,|
(u_{k}+a_{k}\tau_{k})^n_{j+1}|\right \rbrace < \frac{1}{2}.
\end{equation}
Of course, at each interface $x_{j+\frac 12}$, the relaxation Riemann solver $\vect{W}_r\left (\xi;\mathscr{M}(\vect{U}_{j}^n),\mathscr{M}(\vect{U}_{j+1}^n) \right )$ depends on two parameters $(a_k)_{j+\frac 12}^n,k\in\unde$ which must be chosen so as to ensure the conditions stated in the existence Theorem \ref{relax_thm}, and to satisfy some stability properties. Observe that one might take different relaxation parameters $a_k,\,k\in\unde$ for each interface, which amounts to approximating the equilibrium system \eqref{BN_entrop} by a different relaxation approximation at each interface, which is more or less diffusive depending on how large are the local parameters $(a_k)_{j+\frac 12}^n,\,k\in\unde$. Further discussion on the practical computation of these parameters is postponed to Section \ref{choixa1a2} of the Appendices.  

\medskip
Since $\vect{W}_r(\xi;\vect{W}_{L},\vect{W}_{R})$ is the exact solution of the relaxation Riemann problem \eqref{BNrelax_entrop_conv}-\eqref{relax_CI}, the updated unknown $\vect{U}_{j}^{n+1}$ may be computed by a non-conservative finite volume formula as stated in
\begin{prop}
Provided the CFL condition \eqref{chap2cfl} is satisfied, the updated unknown $\vect{U}_{j}^{n+1}$ is given by:
\begin{equation}
\label{chap2fvschemes}
\vect{U}_{j}^{n+1} = \vect{U}_{j}^{n} - \dfrac{\Delta t}{\Delta x}
\left (\mathbf{F}^{-}(\vect{U}_{j}^{n},\vect{U}_{j+1}^{n}) -
\mathbf{F}^{+}(\vect{U}_{j-1}^{n},\vect{U}_{j}^{n})  \right).
\end{equation}
where the numerical fluxes read
\begin{align}
&\mathbf{F}^{-}(\vect{U}_{L},\vect{U}_{R}) = \mathscr{P} 
\textbf{g} \left (\vect{W}_{r} \left (0^-;\mathscr{M}(\vect{U}_L),\mathscr{M}(\vect{U}_R)\right) \right ) +\mathscr{P} 
\mathbf{D}^* \left(\mathscr{M}(\vect{U}_L),\mathscr{M}(\vect{U}_R)\right) \mathbf{1}_{\left \lbrace u_{2}^* < 0 \right \rbrace}, \label{fluxm} \\
&\mathbf{F}^{+}(\vect{U}_{L},\vect{U}_{R}) = \mathscr{P} 
\textbf{g} \left (\vect{W}_{r} \left (0^+;\mathscr{M}(\vect{U}_L),\mathscr{M}(\vect{U}_R)\right) \right ) -\mathscr{P} 
\mathbf{D}^* \left(\mathscr{M}(\vect{U}_L),\mathscr{M}(\vect{U}_R)\right) \mathbf{1}_{\left \lbrace u_{2}^* > 0 \right \rbrace}, \label{fluxp}
\end{align}
with $\mathbf{D}^*(\vect{W}_L,\vect{W}_R):=(\alpha_{1,R}-\alpha_{1,L})(u_2^*(\vect{W}_L,\vect{W}_R),0,0,-\pi_1^*(\vect{W}_L,\vect{W}_R),\pi_1^*(\vect{W}_L,\vect{W}_R),0,0,0,0)^T$. The quantity $\mathbf{1}_{\left \lbrace u_{2}^* < 0 \right \rbrace}$ (resp. $ \mathbf{1}_{\left \lbrace u_{2}^* > 0 \right \rbrace}$) equals one when $ u_{2}^* < 0$ (resp. $u_{2}^* > 0$) and zero otherwise.
\end{prop}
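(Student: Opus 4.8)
The plan is to obtain \eqref{chap2fvschemes} by averaging the approximate solution \eqref{app} over the cell $C_j$ and by exploiting the fact that, by Definition \ref{def_sol}(ii), the projected Riemann fan $\mathscr{P}\vect{W}_r$ solves the balance laws \eqref{Walpha}--\eqref{Wentropy} in the distributional sense. First I would split the integral defining $\vect{U}_j^{n+1}$ at the cell centre $x_j$ into a left contribution over $[x_{j-1/2},x_j]$ and a right contribution over $[x_j,x_{j+1/2}]$. According to \eqref{app}, the integrand on the right half-cell is the fan $\mathscr{P}\vect{W}_r(\,\cdot\,;\mathscr{M}(\vect{U}_j^n),\mathscr{M}(\vect{U}_{j+1}^n))$ emanating from $x_{j+1/2}$, and on the left half-cell it is the fan emanating from $x_{j-1/2}$. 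The CFL condition \eqref{chap2cfl} is used twice here: it ensures that the two adjacent fans do not interact inside $C_j$ (so that $\vect{U}_{app}$ really solves \eqref{aux_cauchy} up to time $\Delta t$), and that no wave reaches the cell centre $x_j$; consequently the trace of each fan at $x_j$ stays equal to $\vect{U}_j^n$ for all $t\in(0,\Delta t)$.

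Next, for each half-cell I would integrate the balance laws satisfied by $\mathscr{P}\vect{W}_r$ over the space-time rectangle (half-cell)$\times(0,\Delta t)$ and apply the divergence theorem. On the right rectangle this turns the $t=\Delta t$ slice integral (which is exactly the right contribution to $\Delta x\,\vect{U}_j^{n+1}$) into $\tfrac{\Delta x}{2}\vect{U}_j^n$, plus the time-integrated difference between the interface trace $\mathscr{P}\textbf{g}(\vect{W}_r(0^-;\mathscr{M}(\vect{U}_j^n),\mathscr{M}(\vect{U}_{j+1}^n)))$ and the cell-centre trace $\mathscr{P}\textbf{g}(\mathscr{M}(\vect{U}_j^n))$, plus the contribution of the non-conservative source. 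The symmetric computation on the left rectangle produces $\tfrac{\Delta x}{2}\vect{U}_j^n$, the interface flux $\mathscr{P}\textbf{g}(\vect{W}_r(0^+;\mathscr{M}(\vect{U}_{j-1}^n),\mathscr{M}(\vect{U}_j^n)))$, and again the trace $\mathscr{P}\textbf{g}(\mathscr{M}(\vect{U}_j^n))$ at $x_j$. A key bookkeeping point is that the common cell-centre trace $\mathscr{P}\textbf{g}(\mathscr{M}(\vect{U}_j^n))$ enters the two half-cell balances with opposite signs, hence cancels upon summation; this cancellation is what makes the final scheme depend only on interface quantities.

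The step I expect to be the main obstacle is the correct treatment of the non-conservative product $\pi_1^*\dv_x\alpha_k$, which is precisely why two distinct fluxes $\mathbf{F}^-$ and $\mathbf{F}^+$ appear instead of a single conservative flux. In each half-cell this term contributes the space-time integral of $(u_2^*,0,0,-\pi_1^*,\pi_1^*,0,0,0,0)^T\Delta\alpha_1$ against the measure $\delta_{x-u_2^*t}$, i.e. against the ray $x=u_2^*t$. Using the precise meaning of this Dirac measure (as spelled out in the comments after Theorem \ref{relax_thm}), I would check that the ray lies inside the right half-cell for \emph{all} $t\in(0,\Delta t)$ precisely when $u_2^*<0$, and inside the left half-cell precisely when $u_2^*>0$, the persistence over the whole time interval being again guaranteed by \eqref{chap2cfl}. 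This yields exactly the factors $\mathbf{1}_{\{u_2^*<0\}}$ and $\mathbf{1}_{\{u_2^*>0\}}$ and identifies the concentrated contribution with $\mathscr{P}\mathbf{D}^*$. It is worth noting that the $\alpha_1$-component carries no genuine flux, so its update is produced \emph{entirely} by this measure term.

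Collecting the two half-cell identities, substituting the definitions \eqref{fluxm}--\eqref{fluxp} of $\mathbf{F}^-$ and $\mathbf{F}^+$ (which are exactly the interface fluxes $\mathscr{P}\textbf{g}(\vect{W}_r(0^\mp))$ augmented by the indicator-weighted $\mathscr{P}\mathbf{D}^*$ term shown there), and using the cancellation of the cell-centre contributions, I would arrive at $\Delta x\,\vect{U}_j^{n+1}=\Delta x\,\vect{U}_j^n-\Delta t\,(\mathbf{F}^--\mathbf{F}^+)$, which is \eqref{chap2fvschemes} after dividing by $\Delta x$. The remaining work is purely computational: verifying the elementary half-cell integrals and the sign conventions, which I would not spell out in detail.
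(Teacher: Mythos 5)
Your proposal is correct and follows essentially the same route as the paper: both rewrite the non-conservative product as a sum of Dirac measures supported on the rays $x=x_{j+\frac 12}+(u_2^*)_{j+\frac 12}^n t$, use the CFL condition \eqref{chap2cfl} to guarantee non-interaction of the interface fans, and integrate the resulting balance law in space-time before applying $\mathscr{P}$, with the indicators $\mathbf{1}_{\{u_2^*<0\}}$ and $\mathbf{1}_{\{u_2^*>0\}}$ recording which cell each ray enters. Your splitting at the cell centre $x_j$ with cancellation of the two traces $\mathscr{P}\textbf{g}(\mathscr{M}(\vect{U}_j^n))$ is only a cosmetic reorganization of the paper's single integration over $(x_{j-\frac 12},x_{j+\frac 12})\times[0,\Delta t]$.
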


\begin{proof}
Under the CFL condition \eqref{chap2cfl}, the exact solution of \eqref{BNrelax_entrop_conv} with the piecewise constant initial data $\vect{W}(x,0):=\sum_{j\in\Z}  \mathscr{M}(\vect{U}_{j}^n)\Ind_{[x_j,x_{j+1}]}(x)$ is the function:
$$
 \vect{W}(x,t):=\sum_{j\in\Z}  \vect{W}_r\left (\frac{x-x_{j+\frac 12}}{t};\mathscr{M}(\vect{U}_{j}^n),\mathscr{M}(\vect{U}_{j+1}^n) \right ) \Ind_{[x_j,x_{j+1}]}(x),
$$
since the interface Riemann problems do not interact. In addition, under \eqref{chap2cfl}, \eqref{BNrelax_entrop_conv} may be written:
$$
\dv_t \vect{W} + \dv_x \textbf{g}(\vect{W}) + \sum_{j\in\Z} \mathbf{D}^*(\mathscr{M}(\vect{U}_j^n),\mathscr{M}(\vect{U}_{j+1}^n)) \delta_0 \left (x-x_{j+\frac 12}-(u_2^*)_{j+\frac 12}^n t \right ) =0,
$$
where $\mathbf{D}^*(\vect{W}_L,\vect{W}_R)$ is defined in the proposition. Integrating this PDE over $(x_{j-\frac 12},x_{j+\frac 12})\times [0,\Delta t]$ and dividing by $\Delta x$, one obtains:
\begin{equation*}
\begin{aligned}
 \frac{1}{\Delta x} \int_{x_{j-\frac 12}}^{x_{j+\frac 12}}\vect{W}(x,\Delta t)\dx & = \mathscr{M}(\vect{U}_j^n) \\
 &-  \dfrac{\Delta t}{\Delta x} \Big ( \textbf{g} \left (\vect{W}_{r} \left (0^-;\mathscr{M}(\vect{U}_j^n),\mathscr{M}(\vect{U}_{j+1}^n)\right) \right ) - \textbf{g} \left (\vect{W}_{r} \left (0^+;\mathscr{M}(\vect{U}_{j-1}^n),\mathscr{M}(\vect{U}_{j}^n)\right) \right ) \Big )\\
 &- \dfrac{\Delta t}{\Delta x}  \mathbf{D}^*(\mathscr{M}(\vect{U}_j^n),\mathscr{M}(\vect{U}_{j+1}^n)) \mathbf{1}_{\left \lbrace (u_{2}^*)_{j+\frac 12}^n < 0 \right \rbrace} \\
 &- \dfrac{\Delta t}{\Delta x} \mathbf{D}^*(\mathscr{M}(\vect{U}_{j-1}^n),\mathscr{M}(\vect{U}_{j}^n)) \mathbf{1}_{\left \lbrace (u_{2}^*)_{j-\frac 12}^n > 0 \right \rbrace}.
\end{aligned}
\end{equation*}
Applying operator $\mathscr{P}$ to this equation yields \eqref{chap2fvschemes}.
\end{proof}

\medskip
This approximate Riemann solver is proved to ensure a conservative discretization of the partial masses, partial entropies and total mixture momentum and to satisfy important stability properties such as the preservation of the densities positivity, a maximum principle for the entropies, and hence the positivity of the phasic internal energies, and discrete energy inequalities which are discrete counterparts of the energy inequalities (\ref{local_ener_relax}) satisfied by the exact weak solutions of the model. Indeed, we have the following result:
\begin{prop}
\label{chap2lemmastab1}
The numerical scheme \eqref{chap2fvschemes} for the auxiliary model has the following properties:
 
 \medskip
 \noindent $\bullet$ \textbf{Positivity:}  Under the CFL condition (\ref{chap2cfl}), the scheme preserves positive values of the phase fractions, densities and internal energies: for all $n\in\xN$, if $\vect{U}_j^n\in\Omega_{\vect{U}}$ for all $j\in\Z$, then $0 <(\alpha_{k})_j^{n+1} < 1$, $(\alpha_k \rho_k)_j^{n+1} >0$, and $\left (\alpha_k \rho_k e_k(\tau_k,s_k)\right )_j^{n+1} >0$ for $k=1,2$ and all $j\in\Z$, \ie\, $\vect{U}_j^{n+1}\in \Omega_{\vect{U}}$ for all $j\in\Z$. Moreover, if the thermodynamics of phase $k$ follows an ideal gas or a stiffened gas \eos (see \eqref{stiffeos}), then the finite volume scheme \eqref{chap2fvschemes} preserves positive values of the quantity $\rho_k  e_k(\tau_k,s_k)- p_{\infty,k}$: 
\[
 \left ( \rho_k e_k(\tau_k,s_k) \right )_j^{n}- p_{\infty,k} >0, \ \forall j\in\Z \quad \Longrightarrow \quad   \left (\rho_k  e_k(\tau_k,s_k) \right )_j^{n+1}- p_{\infty,k} >0, \ \forall j\in\Z.
\]

\medskip
 
 \noindent $\bullet$ \textbf{Phasic mass conservation:} Denoting $\mathbf{F}^{\pm}_{i}$ the $i^{\text{th}}$ component of vector $\mathbf{F}^{\pm}$, the fluxes for the phasic partial masses $\alpha_k\rho_k$ are conservative:  $\mathbf{F}^{-}_{i}(\vect{U}_L,\vect{U}_R)=\mathbf{F}_i^{+}(\vect{U}_L,\vect{U}_R)$ for $i$ in $\lbrace 2,3 \rbrace$. Hence, denoting $(\alpha_k\rho_k u_k)_{j+\frac 12}^n= \mathbf{F}^{\pm}_{1+k}(\vect{U}_{j}^n,\vect{U}_{j+1}^n)$ for $k=1,2$, one has:
  \begin{equation}
  \label{mass_prop}
 (\alpha_k \rho_k)_j^{n+1} =   (\alpha_k\rho_k)_j^{n}  - \frac{\Delta t}{\Delta x} \left (  (\alpha_k\rho_k u_k)_{j+\frac 12}^{n}- (\alpha_k\rho_k u_k)_{j-\frac 12}^{n}\right).
  \end{equation}
  
  \medskip

 \noindent $\bullet$ \textbf{Phasic entropy conservation.} The fluxes for the phasic entropies $\alpha_k\rho_k s_k$ are conservative:  $\mathbf{F}^{-}_{i}(\vect{U}_L,\vect{U}_R)=\mathbf{F}_i^{+}(\vect{U}_L,\vect{U}_R)$ for $i$ in $\lbrace 6,7 \rbrace$. Hence, denoting $(\alpha_k\rho_k s_k u_k)_{j+\frac 12}^n= \mathbf{F}^{\pm}_{5+k}(\vect{U}_{j}^n,\vect{U}_{j+1}^n)$ for $k=1,2$, one has:
 \begin{equation}
   \label{ent_prop}
  (\alpha_k \rho_k s_k)_j^{n+1}  =   (\alpha_k\rho_k s_k )_j^{n}  - \frac{\Delta t}{\Delta x} \left (  (\alpha_k\rho_k s_k u_k)_{j+\frac 12}^{n}- (\alpha_k\rho_k s_k u_k)_{j-\frac 12}^{n}\right).
 \end{equation}

 \medskip
 
 \noindent $\bullet$ \textbf{Total momentum conservation.} The fluxes for the mixture momentum $\sum_{k=1,2}\alpha_k\rho_k u_k$ are conservative:  $\sum_{k=1,2}\mathbf{F}^{-}_{3+k}(\vect{U}_L,\vect{U}_R)=\sum_{k=1,2}\mathbf{F}^{+}_{3+k}(\vect{U}_L,\vect{U}_R)$. Hence, denoting $(\sum_{k=1,2} \alpha_k\rho_k u_k^2+\alpha_k \pi_k)_{j+\frac 12}^n= \sum_{k=1,2}\mathbf{F}^{\pm}_{3+k}(\vect{U}_{j}^n,\vect{U}_{j+1}^n)$ for $k=1,2$, one has:
 \begin{equation}
   \label{mom_prop}
 \begin{array}{ll}
   \sum_{k=1}^2 (\alpha_k\rho_k u_k)_j^{n+1}= \sum_{k=1}^2 (\alpha_k\rho_k u_k)_j^{n}  & \displaystyle - \frac{\Delta t}{\Delta x} \Big ( \sum_{k=1,2}  \alpha_k\rho_k u_k^2+ \alpha_k\pi_k \Big)_{j+\frac12}^n \\
    & \displaystyle + \frac{\Delta t}{\Delta x}\Big (\sum_{k=1,2}  \alpha_k\rho_k u_k^2+ \alpha_k\pi_k \Big)_{j-\frac12}^n.
 \end{array}
\end{equation}
 
 \medskip
 
 \noindent $\bullet$ \textbf{Discrete energy inequalities.} Assume that the relaxation parameters $(a_k)_{j+\frac 12}^n,\, k=1,2$ satisfy Whitham's condition at each time step and each interface, \ie\, that for all $n\in\xN$, $j\in\Z$, $(a_k)_{j+\frac 12}^n,\, k=1,2$ are large enough so that
\begin{equation}
\label{whithambis}
((a_k)_{j+\frac 12}^n)^2 > -\dv_{\tau_k}\PP_k(\T_k,s_k),
\end{equation}
for all $\T_k$ and $s_k$ in the solution $\xi\mapsto \vect{W}_r\left (\xi;\mathscr{M}(\vect{U}_{j}^n),\mathscr{M}(\vect{U}_{j+1}^n)\right)$. Then, the values $\vect{U}_{j}^n,\, j\in\Z,\,n\in\xN$, computed by the scheme satisfy the following discrete energy inequalities: 
\begin{equation}
  \label{ener_prop}
\begin{array}{ll}
  (\alpha_k\rho_k E_k)(\vect{U}_j^{n+1}) \leq (\alpha_k\rho_k E_k)(\vect{U}_j^{n}) & \displaystyle - \frac{\Delta t}{\Delta x} \left (  (\alpha_k\rho_k \E_k u_k+\alpha_k \pi_k u_k)_{j+\frac 12}^{n}- (\alpha_k\rho_k \E_k u_k+\alpha_k \pi_k u_k)_{j-\frac 12}^{n}\right) \\[2ex] 
   & \displaystyle +\frac{\Delta t}{\Delta x}  \Ind_{\left \lbrace (u_2^*)_{j-\frac 12}^n \geq 0 \right \rbrace }(u_2^* \, \pi_1^*)_{j- \frac 12}^n\left ( (\alpha_k)_{j}^n- (\alpha_k)_{j-1}^n \right ) \\[2ex]
   & \displaystyle+\frac{\Delta t}{\Delta x}  \Ind_{\left \lbrace (u_2^*)_{j+\frac 12}^n \leq 0 \right \rbrace }(u_2^* \, \pi_1^*)_{j+ \frac 12}^n \left ( (\alpha_k)_{j+1}^n- (\alpha_k)_{j}^n \right ),
   \end{array}
\end{equation}
where  for $j\in\Z$, $(\alpha_k\rho_k \E_k u_k+\alpha_k \pi_k u_k)_{j+\frac 12}^{n}=(\alpha_k\rho_k \E_k u_k+\alpha_k \pi_k u_k) \left (\vect{W}_r\left (0^+;\mathscr{M}(\vect{U}_{j}^n),\mathscr{M}(\vect{U}_{j+1}^n) \right ) \right )$ is the right hand side trace of the phasic energy flux evaluated at $x_{j+\frac 12}$.
%\displaystyle \frac{1}{\Delta x}\int_{x_{j-\frac 12}}^{x_{j+\frac 12}}(\alpha_k \rho_k \E_k)(x,\Delta t)dx
%\end{itemize}
\end{prop}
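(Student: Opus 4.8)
The whole argument rests on the representation already used to derive the flux formula \eqref{chap2fvschemes}: under the CFL condition \eqref{chap2cfl} the interface Riemann fans do not interact, so that $\vect{U}_j^{n+1}$ is exactly the $\mathscr{P}$-projection of the cell average of the \emph{exact} self-similar solution $\vect{W}(\cdot,\Delta t)$ of \eqref{BNrelax_entrop_conv}. Writing $\overline{\vect{W}}_j := \frac{1}{\Delta x}\int_{x_{j-1/2}}^{x_{j+1/2}}\vect{W}(x,\Delta t)\dx$, linearity of $\mathscr{P}$ gives $\vect{U}_j^{n+1}=\mathscr{P}\,\overline{\vect{W}}_j$, and $\overline{\vect{W}}_j$ is a convex average of values of $\vect{W}(\cdot,\Delta t)$, all lying in $\Omega_{\vect{W}}$ since that solution is a juxtaposition of admissible Riemann solutions. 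The four groups of properties follow from this single representation.

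\textbf{Positivity and maximum principle.} The components $\alpha_1$, $\alpha_k\rho_k$ and $\alpha_k\rho_k\T_k$ of $\overline{\vect{W}}_j$ are averages of quantities respectively valued in $(0,1)$ and strictly positive throughout the fans, so $0<(\alpha_1)_j^{n+1}<1$ and $(\alpha_k\rho_k)_j^{n+1}>0$, whence $\tau_k=(\alpha_k)_j^{n+1}/(\alpha_k\rho_k)_j^{n+1}>0$. Since the entropies are merely advected \eqref{ent_adv}, both $\alpha_k\rho_k s_k$ and $\alpha_k\rho_k$ are conservative averages of a single advected field, so $(s_k)_j^{n+1}=(\alpha_k\rho_k s_k)_j^{n+1}/(\alpha_k\rho_k)_j^{n+1}$ is a weighted mean of neighbouring entropy values and obeys a discrete maximum principle. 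Positivity of $e_k(\tau_k,s_k)$ then follows from $\tau_k>0$ and the positivity of the function $e_k$, giving $\vect{U}_j^{n+1}\in\Omega_{\vect{U}}$. For an ideal or stiffened gas, integrating $\dv_\tau e_k=-\PP_k$ yields $e_k(\tau_k,s_k)=p_{\infty,k}\tau_k+C_k(s_k)\tau_k^{-(\gamma_k-1)}$, whence $\rho_k e_k(\tau_k,s_k)-p_{\infty,k}=C_k(s_k)\tau_k^{-\gamma_k}$; for $\tau_k>0$ its sign depends on $s_k$ alone through a one-sided bound, which is propagated by the entropy maximum principle.

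\textbf{Conservation identities.} These follow from the structure of the correction vector $\mathbf{D}^*$, whose components $2,3$ (partial masses) and $6,7$ (partial entropies) vanish and whose momentum components $4,5$ are opposite. For $i\in\{2,3,6,7\}$ one has $\mathbf{F}^\pm_i=\textbf{g}_i(\vect{W}_r(0^\pm;\cdots))$; since the partial-mass flux $\alpha_k\rho_k u_k$ and the partial-entropy flux $\alpha_k\rho_k s_k u_k$ are genuine conservative fluxes (see \eqref{Wmass} and \eqref{Wentropy}), the Rankine--Hugoniot relation across a possible stationary wave forces them to be continuous at $\xi=0$, so $\mathbf{F}^-_i=\mathbf{F}^+_i$, which is \eqref{mass_prop} and \eqref{ent_prop}. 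Summing the two momentum components, the $\mathbf{D}^*$ contributions cancel and the non-conservative products add up to $\pi_1^*\dv_x(\alpha_1+\alpha_2)=0$, leaving the conservative total-momentum flux, continuous at $\xi=0$, which gives \eqref{mom_prop}.

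\textbf{Discrete energy inequalities.} This is the delicate point, and the plan is to combine three ingredients. First, integrating the exact phasic energy balances of Definition \ref{def_sol}\,(iii) over $C_j\times[0,\Delta t]$ and dividing by $\Delta x$ yields, for each $k$, an identity for the averaged relaxation energy $\overline{(\alpha_k\rho_k\E_k)}_j$ in terms of the interfacial energy fluxes and of the non-conservative products $u_2^*\pi_1^*\dv_x\alpha_k$; the localisation of the Dirac mass $\delta_{x-u_2^*t}$ inside $C_j$ or in the adjacent cell, according to the sign of $u_2^*$ at each interface, is exactly what produces the indicator terms $\Ind_{\{(u_2^*)_{j\mp1/2}^n\gtrless 0\}}$ of \eqref{ener_prop}, while the phase-$1$ dissipation $-\mathcal{Q}\,\delta_{x-u_2^*t}\le 0$ from \eqref{relax_phase1_diss} only reinforces the inequality. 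Second, the datum on $C_j$ is at equilibrium, so $\E_k=E_k$ there by \eqref{gibbsAE}, turning the starting term into $(\alpha_k\rho_k E_k)(\vect{U}_j^n)$. Third, and most importantly, one passes from $\overline{(\alpha_k\rho_k\E_k)}_j$ to $(\alpha_k\rho_k E_k)(\vect{U}_j^{n+1})$: by the Gibbs principle \eqref{gibbsAE}, resetting $\T_k$ to $\tau_k$ at the averaged state can only lower the energy, so $(\alpha_k\rho_k E_k)(\vect{U}_j^{n+1})\le(\alpha_k\rho_k\E_k)(\overline{\vect{W}}_j)$, while the convexity of $\vect{W}\mapsto(\alpha_k\rho_k\E_k)(\vect{W})$ in the conservative relaxation variables, valid under Whitham's condition \eqref{whithambis}, gives through Jensen's inequality $(\alpha_k\rho_k\E_k)(\overline{\vect{W}}_j)\le\overline{(\alpha_k\rho_k\E_k)}_j$. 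Chaining the three yields \eqref{ener_prop}. The main obstacle is the convexity of the relaxation energy in conservative variables under the sub-characteristic condition, a Hessian computation which one either carries out explicitly or invokes from the relaxation literature \cite{CGPIR,CGS,Bouchut}; a secondary difficulty is the careful accounting of the Dirac masses at the $u_2$-waves of the two neighbouring Riemann problems, which must reproduce precisely the two indicator terms of \eqref{ener_prop}.
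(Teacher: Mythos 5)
Your treatment of the positivity, maximum-principle and conservation statements follows the paper's own argument (Godunov averaging of the exact relaxation Riemann solution, advection of $s_k$, monotonicity of $s\mapsto e_k(\tau,s)$ and of $s\mapsto \rho^{\gamma_k}C_k(s)$ for the stiffened gas, vanishing/antisymmetric components of $\mathbf{D}^*$ plus Rankine--Hugoniot at $\xi=0$), and those parts are fine.

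The genuine problem is in the discrete energy inequality, where you invert the order of the two final inequalities. You first apply Jensen to $\vect{W}\mapsto(\alpha_k\rho_k\E_k)(\vect{W})$ to get $(\alpha_k\rho_k\E_k)(\overline{\vect{W}}_j)\le\overline{(\alpha_k\rho_k\E_k)}_j$, and only then use the Gibbs principle at the averaged state. This requires convexity of the \emph{relaxation} energy $(\alpha_k\rho_k\E_k)$ as a function of the nine conservative relaxation variables — a property that is not established anywhere in the paper, is not what Whitham's condition delivers, and is doubtful in general: writing $\E_k=u_k^2/2+e_k(\T_k,s_k)+\PP_k(\T_k,s_k)(\T_k-\tau_k)+\tfrac{a_k^2}{2}(\T_k-\tau_k)^2$, the Hessian in $(\tau_k,\T_k)$ contains the term $\dv^2_{\T_k\T_k}\PP_k\cdot(\T_k-\tau_k)$, whose sign is uncontrolled away from equilibrium, so the sub-characteristic condition alone does not give joint convexity. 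The relaxation literature you would invoke (\cite{CGPIR,Bouchut}) in fact organizes the argument precisely so as to \emph{avoid} this convexity. The paper does the same: it first applies the Gibbs minimization \eqref{gibbsAE} \emph{pointwise under the integral}, i.e. $(\alpha_k\rho_k E_k)(\vect{U}_{app}(x,t^{n+1}))\le(\alpha_k\rho_k\E_k)(\vect{W}(x,t^{n+1}))$ for each $x$ (valid under \eqref{whithambis} because the only $(\T_k,s_k)$ occurring in the fan are the left and right data values), and only then applies Jensen — but to the \emph{equilibrium} energy $\vect{U}\mapsto(\alpha_k\rho_k E_k)(\vect{U})$, whose convexity is exactly Proposition \ref{prop_ent_convex}. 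Swapping your two steps back into this order removes the unproved ingredient and closes the argument. A secondary, smaller omission: the flux appearing in \eqref{ener_prop} is the $0^+$ trace at $x_{j+\frac12}$, whereas integrating over $C_j$ naturally produces the $0^-$ trace there; the paper reconciles the two with a Rankine--Hugoniot relation across $\xi=0$ together with $\mathcal{Q}_{j+\frac12}^n\ge 0$ (relevant when $(u_2^*)_{j+\frac12}^n=0$), a step your write-up skips.
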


Note that \eqref{mass_prop} and \eqref{ent_prop} are updating formulae for the next time step unknown $\vect{U}_{j}^{n+1}$ whereas \eqref{mom_prop} and the energy inequalities \eqref{ener_prop} are properties satisfied by the values $\vect{U}_{j}^n,\, j\in\Z,\,n\in\xN$, computed by the numerical scheme.

\begin{proof}[Proof of Prop. \ref{chap2lemmastab1}]
The approximate Riemann solver is a Godunov type scheme where $\vect{U}_j^{n+1}$ is the cell-average over $C_j$ of the function $\vect{U}_{app}(x,t)$. Hence, the positivity property on the phase fractions and phase densities is a direct consequence of Theorem \ref{relax_thm}. For this purpose, energy dissipation \eqref{relax_phase1_diss} across the $u_2$-contact discontinuity may be necessary for enforcing this property when the ratio $\frac{\alpha_{1,j}}{\alpha_{1,j+1}}$ (or its inverse) is large for some $j\in\Z$. 

The positivity of the phasic internal energies is more intricate. Under the CFL condition \eqref{chap2cfl}, $(s_k)_j^{n+1}$ is a convex combination of $(s_k)_{j-1}^{n}$, $(s_k)_{j}^{n}$ and $(s_k)_{j+1}^{n}$ since the phasic entropies are advected in the solutions of the local Riemann problems for \eqref{BNrelax_entrop_conv}. Let us define ${\rm jmax}\in \lbrace j-1,j,j+1\rbrace$ such that $(s_k)_{\rm jmax}^{n}=\max\limits_{i=j-1,j,j+1}(s_k)_i^{n}$.
Since $s\mapsto e_k( (\tau_k)_j^{n+1},s)$ is a positive decreasing function by the second law of thermodynamics, one has:
\[
 e_k \left ((\tau_k)_j^{n+1}, (s_k)_j^{n+1} \right ) \geq e_k \left ((\tau_k)_j^{n+1}, (s_k)_{\rm jmax}^{n} \right ) > 0. 
\]
Note that the quantities $e_k \left ((\tau_k)_j^{n+1}, s \right)$ are well defined (and positive) since $(\tau_k)_j^{n+1}>0$.

In a similar way, we prove that, if the thermodynamics of phase $k$ follows a stiffened gas \eos~ according to \eqref{stiffeos}, then the numerical scheme \eqref{chap2fvschemes} preserves positive values of the quantity $\rho_k  e_k(\tau_k,s_k)- p_{\infty,k}$. It follows from the fact that for a stiffened gas \eos, one has:
\[
 \rho_k e_k(\tau_k,s_k) -  p_{\infty,k}= \rho_k^{\gamma_k}\exp\Big( \frac{s_{k,0}-s_k}{{C_v}_k}\Big ),
\]
where $s_{k,0}$ is a constant reference entropy and ${C_v}_k$ is the (constant) heat capacity at constant volume. Hence, the function $s\mapsto \rho_k e_k(\tau_k,s)-  p_{\infty,k}$ is also a positive and decreasing function whenever $\rho_k >0$. This yields:
\[
   \left ( \rho_k e_k(\tau_k,s_k) \right )_j^{n+1}- p_{\infty,k} \ \geq \ (\rho_k)_{j}^{n+1} e_k \Big ((\tau_k)_j^{n+1},  (s_k)_{\rm jmax}^{n}\Big ) - p_{\infty,k}.
\]
The right hand side of this inequality reads:
\begin{equation*}
\begin{aligned}
 (\rho_k)_{j}^{n+1} e_k \Big ((\tau_k)_j^{n+1},  (s_k)_{\rm jmax}^{n}\Big ) - p_{\infty,k} 
 &= ((\rho_k)_{j}^{n+1})^{\gamma_k} \exp\Big( \frac{s_{k,0}-(s_k)_{\rm jmax}^{n}}{{C_v}_k}\Big ) \\
 &= \left ( \frac{(\rho_k)_{j}^{n+1}}{(\rho_k)_{\rm jmax}^{n}}\right )^{\gamma_k} ((\rho_k)_{\rm jmax}^{n})^{\gamma_k} \exp\Big( \frac{s_{k,0}-(s_k)_{\rm jmax}^{n}}{{C_v}_k}\Big ) \\
 &=  \left ( \frac{(\rho_k)_{j}^{n+1}}{(\rho_k)_{\rm jmax}^{n}}\right )^{\gamma_k} \left ( \left ( \rho_k e_k(\tau_k,s_k) \right )_{\rm jmax}^{n} - p_{\infty,k} \right )
\end{aligned}
 \end{equation*}
and is therefore positive since at time $t^n$, we have $ \left ( \rho_k e_k(\tau_k,s_k) \right )_j^{n}- p_{\infty,k} >0$ for all $j\in\Z$.

The proof of \eqref{mass_prop}, \eqref{ent_prop} and \eqref{mom_prop} involves no particular difficulties. It is a direct consequence of equations \eqref{Wmass}, \eqref{Wmomentum} and \eqref{Wentropy} satisfied by the relaxation Riemann solutions at each interface.

Let us prove the discrete energy inequalities \eqref{ener_prop} satisfied by the scheme under Whitham's condition \eqref{whithambis}. Assuming the CFL condition \eqref{chap2cfl}, the solution of \eqref{BNrelax_entrop_conv} over $[x_{j-\frac 12},x_{j+\frac 12}]\times[t^n,t^{n+1}]$ is the function
\begin{multline}
\vect{W}(x,t):=\vect{W}_r\left (\frac{x-x_{j-\frac 12}}{t-t^n};\mathscr{M}(\vect{U}_{j-1}^n),\mathscr{M}(\vect{U}_{j}^n) \right ) \Ind_{[x_{j-\frac 12},x_{j}]}(x) \\ +\vect{W}_r\left (\frac{x-x_{j+\frac 12}}{t-t^n};\mathscr{M}(\vect{U}_{j}^n),\mathscr{M}(\vect{U}_{j+1}^n) \right ) \Ind_{[x_{j},x_{j+\frac 12}]}(x).
\end{multline}
According to Theorem \ref{relax_thm}, this function satisfies the phase 1 energy equation:
\begin{multline}
\label{local_ener_relax_1}
\dv_t (\alpha_1 \rho_1 \E_1) + \dv_x (\alpha_1 \rho_1 \E_1 u_1 + \alpha_1 \pi_1 u_1) - u_2^*\pi_1^* \dv_x \alpha_1 = \\
- \mathcal{Q}_{j-\frac 12}^n \delta_{0}\left (x-x_{j-\frac12}-(u_2^*)_{j-\frac 12}^n(t-t^n) \right ) 
- \mathcal{Q}_{j+\frac 12}^n\delta_{0} \left (x-x_{j+\frac12}-(u_2^*)_{j+\frac 12}^n(t-t^n) \right ),
 \end{multline}
where for $i\in\Z$, we have denoted $\mathcal{Q}_{i-\frac 12}^n=\mathcal{Q}\left ((u_2^*)_{i-\frac 12}^n,\mathscr{M}(\vect{U}_{i-1}^n),\mathscr{M}(\vect{U}_{i}^n) \right )$.
Integrating this equation over $]x_{j-\frac 12},x_{j+\frac 12}[\times[t^n,t^{n+1}]$ and dividing by $\Delta x$ yields:

\begin{equation}
\label{ener1_proof}
 \begin{array}{ll}
  \displaystyle \frac{1}{\Delta x}\int_{x_{j-\frac 12}}^{x_{j+\frac 12}}(\alpha_1\rho_1 \E_1)(\vect{W}(x,t^{n+1}))\dx 
  &\leq (\alpha_1\rho_1 \E_1)(\mathscr{M}(\vect{U}_j^{n})) \\
  & \displaystyle - \frac{\Delta t}{\Delta x}  (\alpha_1\rho_1 \E_1 u_1+\alpha_1 \pi_1 u_1) \left ( \vect{W}_r\left (0^-;\mathscr{M}(\vect{U}_{j}^n),\mathscr{M}(\vect{U}_{j+1}^n) \right ) \right )\\[2ex]
  &\displaystyle  + \frac{\Delta t}{\Delta x} (\alpha_1\rho_1 \E_1 u_1+\alpha_1 \pi_1 u_1) \left (\vect{W}_r\left (0^+;\mathscr{M}(\vect{U}_{j-1}^n),\mathscr{M}(\vect{U}_{j}^n) \right ) \right ) \\[2ex]
  & \displaystyle +\frac{\Delta t}{\Delta x}  \Ind_{\left \lbrace (u_2^*)_{j-\frac 12}^n \geq 0 \right \rbrace }(u_2^* \, \pi_1^*)_{j- \frac 12}^n\left ( (\alpha_1)_{j}^n- (\alpha_1)_{j-1}^n \right ) \\[2ex]
  & \displaystyle+\frac{\Delta t}{\Delta x}  \Ind_{\left \lbrace (u_2^*)_{j+\frac 12}^n \leq  0 \right \rbrace }(u_2^* \, \pi_1^*)_{j+ \frac 12}^n \left ( (\alpha_1)_{j+1}^n- (\alpha_1)_{j}^n \right ),
  %& \displaystyle -\frac{\Delta t}{\Delta x}  \Ind_{\left \lbrace (u_2^*)_{j-\frac 12}^n > 0 \right \rbrace }\mathcal{Q}_{j- \frac 12}^n \\[2ex]
  %& \displaystyle -\frac{\Delta t}{\Delta x}  \Ind_{\left \lbrace (u_2^*)_{j+\frac 12}^n <  0 \right \rbrace }\mathcal{Q}_{j+ \frac 12}^n .
   \end{array}
\end{equation}
because $\mathcal{Q}_{j- \frac 12}^n\geq0$ and $\mathcal{Q}_{j+ \frac 12}^n \geq 0$. Since the initial data is at equilibrium: $\vect{W}(x,t^{n})=\mathscr{M}(\vect{U}_j^{n})$ for all $x\in C_j$ ( \ie\,  $(\T_1)_j^n$ is set to be equal to $(\tau_1)_j^n$) one has $(\alpha_1\rho_1 \E_1)(\mathscr{M}(\vect{U}_j^{n}))=(\alpha_1\rho_1 E_1)(\vect{U}_j^{n})$ according to Proposition \ref{prop_gibbs}. Applying the Rankine-Hugoniot jump relation to \eqref{local_ener_relax_1} across the line $\lbrace (x,t),x=x_{j+\frac 12}, \, t>0\rbrace$, yields:
\begin{multline*}
(\alpha_1\rho_1 \E_1 u_1+\alpha_1 \pi_1 u_1) \left ( \vect{W}_r\left (0^-;\mathscr{M}(\vect{U}_{j}^n),\mathscr{M}(\vect{U}_{j+1}^n) \right ) \right ) \\
= (\alpha_1\rho_1 \E_1 u_1+\alpha_1 \pi_1 u_1) \left ( \vect{W}_r\left (0^+;\mathscr{M}(\vect{U}_{j}^n),\mathscr{M}(\vect{U}_{j+1}^n) \right ) \right ) + \mathcal{Q}_{j+ \frac 12}^n \Ind_{\left \lbrace (u_2^*)_{j+\frac 12}^n =  0 \right \rbrace }.
 \end{multline*}
Hence, since $\mathcal{Q}_{j+ \frac 12}^n\geq0$, for the interface $x_{j+\frac 12}$, taking the trace of $(\alpha_1\rho_1 \E_1 u_1+\alpha_1 \pi_1 u_1)$ at $0^+$ instead of $0^-$ in \eqref{ener1_proof} only improves the inequality. Furthermore, assuming that the parameter $a_1$ satisfies Whitham's condition \eqref{whithambis}, the Gibbs principle stated in \eqref{gibbsAE} holds true so that:
$$
\frac{1}{\Delta x}\int_{x_{j-\frac 12}}^{x_{j+\frac 12}}(\alpha_1\rho_1 E_1)(\vect{U}_{app}(x,t^{n+1}))\dx \leq \frac{1}{\Delta x}\int_{x_{j-\frac 12}}^{x_{j+\frac 12}}(\alpha_1\rho_1 \E_1)(\vect{W}(x,t^{n+1}))\dx.
$$
Invoking the convexity of the mapping $\vect{U}\mapsto(\alpha_1\rho_1 E_1)(\vect{U})$ (see Prop. \ref{prop_ent_convex}), Jensen's inequality implies that
$$
(\alpha_1\rho_1 E_1)(\vect{U}_j^{n+1}) \leq \frac{1}{\Delta x}\int_{x_{j-\frac 12}}^{x_{j+\frac 12}}(\alpha_1\rho_1 E_1)(\vect{U}_{app}(x,t^{n+1}))\dx,
$$
which yields the desired discrete energy inequality for phase 1. The proof of the discrete energy inequality for phase 2 follows similar steps. 
\end{proof}

\section{A positive and entropy-satisfying scheme for the first order Baer-Nunziato model}
\label{sec_duality}
In the previous section we have designed a numerical scheme for an auxiliary two-phase flow model where the exact solutions conserve the phasic entropies while the phasic energies are dissipated by shock solutions. The scheme has been proven to satisfy discrete counterparts of these features while ensuring the positivity of the relevant quantities. 
\medskip

In the present section, we describe the correction to be given to the auxiliary scheme in order to conserve the phasic energies while dissipating the phasic entropies. We end up with a numerical scheme which is consistent with the entropy weak solutions of any Cauchy problem associated with the original Baer-Nunziato model (\ref{BN_ener}):
\begin{equation}
\label{BN_cauchy0}
\left \lbrace
          \begin{array}{ll}
		\dv_t \U + \dv_x {\bf \Fcal}(\U) + {\bf \Ccal}(\U)\dv_x \U = 0, & x \in \R, t>0, \\
		 \U(x,0) =\U_0(x), &  x \in \R.
	   \end{array}
\right.
\end{equation}

\medskip

We keep the same time and space discretization as described in Section \ref{subsec_scheme_relax}. The approximate solution at time $t^{n}$, $x \in \R \mapsto \U_{\lambda}(x,t^{n}) \in \Omega$ is a piecewise constant function whose value on each cell $C_{j}$ is a  constant value denoted by $\U_{j}^{n}$. The updated value $\U_j^{n+1}$ is computed through a two-step algorithm described hereunder:

\subsection{A fractional step algorithm}

\paragraph{$\bullet$ Step 1: updating the auxiliary unknown.} Given 
$$\U_j^{n}=\left ( (\alpha_1)_j^n, (\alpha_1 \rho_1)_j^n, (\alpha_2 \rho_2)_j^n, (\alpha_1 \rho_1 u_1)_j^n, (\alpha_2 \rho_2 u_2)_j^n, (\alpha_1 \rho_1 E_1)_j^n, (\alpha_2 \rho_2 E_2)_j^n \right )^T,
$$
we begin with setting the auxiliary unknown $\vect{U}_j^n$ as follows:
$$\vect{U}_j^n=\left ( (\alpha_1)_j^n, (\alpha_1 \rho_1)_j^n, (\alpha_2 \rho_2)_j^n, (\alpha_1 \rho_1 u_1)_j^n, (\alpha_2 \rho_2 u_2)_j^n, (\alpha_1 \rho_1 s_1)(\U_j^n), (\alpha_2 \rho_2 s_2)(\U_j^n) \right )^T,
$$
where $(\alpha_k \rho_k s_k)(\U_j^n)$ is the partial entropy of phase $k$, computed from $\U_j^n$, knowing the density $\rho_k$, the total energy $E_k$ and the kinetic energy $u_k^2/2$:
$$
(\alpha_k \rho_k s_k)(\U_j^n) := \alpha_k \rho_k s_k \left ( (\rho_k)_j^n,(E_k-u_k^2/2)_j^n\right ).
$$
Observe that, with this definition of $\vect{U}_j^{n}$, one has $ (\alpha_k\rho_k E_k)(\vect{U}_j^{n})=(\alpha_k \rho_k E_k)_j^n$.

\medskip
We then compute $\vect{U}_j^{n+1,-}$ by applying the relaxation scheme designed for the auxiliary model:
\begin{equation}
\label{fvs}
\vect{U}_{j}^{n+1,-} = \vect{U}_{j}^{n} - \dfrac{\Delta t}{\Delta x}
\left (\mathbf{F}^{-}(\vect{U}_{j}^{n},\vect{U}_{j+1}^{n}) -
\mathbf{F}^{+}(\vect{U}_{j-1}^{n},\vect{U}_{j}^{n})  \right).
\end{equation}
According to Proposition \ref{chap2lemmastab1} the phasic energies are dissipated at the discrete level following:
\begin{equation}
  \label{ener_prop_bis}
\begin{array}{ll}
  (\alpha_k\rho_k E_k)(\vect{U}_j^{n+1,-}) \leq (\alpha_k\rho_k E_k)_j^n & \displaystyle - \frac{\Delta t}{\Delta x} \left (  (\alpha_k\rho_k \E_k u_k+\alpha_k \pi_k u_k)_{j+\frac 12}^{n}- (\alpha_k\rho_k \E_k u_k+\alpha_k \pi_k u_k)_{j-\frac 12}^{n}\right) \\[2ex] 
   & \displaystyle +\frac{\Delta t}{\Delta x}  \Ind_{\left \lbrace (u_2^*)_{j-\frac 12}^n \geq 0 \right \rbrace }(u_2^* \, \pi_1^*)_{j- \frac 12}^n\left ( (\alpha_k)_{j}^n- (\alpha_k)_{j-1}^n \right ) \\[2ex]
   & \displaystyle+\frac{\Delta t}{\Delta x}  \Ind_{\left \lbrace (u_2^*)_{j+\frac 12}^n \leq 0 \right \rbrace }(u_2^* \, \pi_1^*)_{j+ \frac 12}^n \left ( (\alpha_k)_{j+1}^n- (\alpha_k)_{j}^n \right ).
   \end{array}
\end{equation}

\paragraph{$\bullet$ Step 2: Exchanging energy and entropy.} This final step is a correction step which aims at enforcing conservative updates for the energies of the original unknown $\U_j^{n+1}$. It simply consists in keeping unchanged the updates of the phase fractions, partial masses and momentum:
\begin{equation}
\label{autre_update}
 (\alpha_1)_j^{n+1}:= (\alpha_1)_j^{n+1,-}, \quad (\alpha_k \rho_k)_j^{n+1}:= (\alpha_k \rho_k)_j^{n+1,-}, \quad (\alpha_k \rho_k u_k)_j^{n+1}:= (\alpha_k \rho_k u_k)_j^{n+1,-}, \quad k=1,2,
\end{equation}
while enforcing energy conservation by defining the energies updates as:
\begin{equation}
  \label{ener_update}
\begin{array}{ll}
  (\alpha_k\rho_k E_k)_j^{n+1} := (\alpha_k\rho_k E_k)_j^n & \displaystyle - \frac{\Delta t}{\Delta x} \left (  (\alpha_k\rho_k \E_k u_k+\alpha_k \pi_k u_k)_{j+\frac 12}^{n}- (\alpha_k\rho_k \E_k u_k+\alpha_k \pi_k u_k)_{j-\frac 12}^{n}\right) \\[2ex] 
   & \displaystyle +\frac{\Delta t}{\Delta x}  \Ind_{\left \lbrace (u_2^*)_{j-\frac 12}^n \geq 0 \right \rbrace }(u_2^* \, \pi_1^*)_{j- \frac 12}^n\left ( (\alpha_k)_{j}^n- (\alpha_k)_{j-1}^n \right ) \\[2ex]
   & \displaystyle+\frac{\Delta t}{\Delta x}  \Ind_{\left \lbrace (u_2^*)_{j+\frac 12}^n \leq 0 \right \rbrace }(u_2^* \, \pi_1^*)_{j+ \frac 12}^n \left ( (\alpha_k)_{j+1}^n- (\alpha_k)_{j}^n \right ).
   \end{array}
\end{equation}

\subsection{Finite volume formulation of the scheme}
In practice, in the implementation, when performing the first step of the method, \ie\, when applying the relaxation scheme to the auxiliary variable $\vect{U}$, one does not update the last two variables which are the phasic entropies. Indeed, computing the phasic entropies $(\alpha_k \rho_k s_k)_j^{n+1,-}$ is not needed for the update of the phasic energies which is performed in the second step. Therefore, the two step algorithm described in the previous section can be reformulated as a classical non-conservative finite volume scheme. Indeed, we have the following result:
\begin{prop}
The two step algorithm described in equations \eqref{fvs}-\eqref{autre_update}-\eqref{ener_update} is equivalent to the following non-conservative finite volume scheme:
\begin{equation}
\label{fvs_2}
\U_{j}^{n+1} = \U_{j}^{n} - \dfrac{\Delta t}{\Delta x}
\left (\mathcal{F}^{-}(\U_{j}^{n},\U_{j+1}^{n}) -
\mathcal{F}^{+}(\U_{j-1}^{n},\U_{j}^{n})  \right).
\end{equation}
where the first five components of $\mathcal{F}^{\pm}(\U_{L},\U_{R})$ coincide with the first five components of $\mathbf{F}^{\pm}(\vect{U}_{L},\vect{U}_{R})$ with $\vect{U}$ computed from $\U$ by imposing $\alpha_k\rho_k s_k = (\alpha_k\rho_k s_k)(\U)$. The last two components of $\mathcal{F}^{\pm}(\U_{L},\U_{R})$ are given for $k\in\unde$ by:
\begin{equation*}
\begin{aligned}
&\mathcal{F}_{5+k}^{-}(\U_{L},\U_{R}) = (\alpha_k\rho_k \E_k u_k+\alpha_k \pi_k u_k) \left ( \vect{W}_r\left (0^+;\mathscr{M}(\vect{U}_{L}),\mathscr{M}(\vect{U}_{R}) \right ) \right ) + \mathbf{1}_{\left \lbrace u_2^* \leq 0 \right \rbrace }(u_2^* \, \pi_1^*) \left ( (\alpha_k)_{R}- (\alpha_k)_{L} \right ), \\
&\mathcal{F}_{5+k}^{+}(\U_{L},\U_{R}) = (\alpha_k\rho_k \E_k u_k+\alpha_k \pi_k u_k) \left ( \vect{W}_r\left (0^+;\mathscr{M}(\vect{U}_{L}),\mathscr{M}(\vect{U}_{R}) \right ) \right ) - \mathbf{1}_{\left \lbrace u_2^* \geq 0 \right \rbrace }(u_2^* \, \pi_1^*) \left ( (\alpha_k)_{R}- (\alpha_k)_{L} \right ). 
\end{aligned}
\end{equation*}
\end{prop}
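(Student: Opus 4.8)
The plan is to prove the equivalence entrywise, splitting the seven components of $\U_j^{n+1}$ into the first five (phase fraction, partial masses and partial momenta), which Step 2 leaves untouched, and the last two (phasic total energies), which Step 2 overwrites. For the first five components the argument is pure bookkeeping: by construction the auxiliary state $\vect{U}_j^n$ feeding Step 1 coincides with $\U_j^n$ on its first five entries, differing only in the last two, where the energies are replaced by the completed entropies $(\alpha_k\rho_k s_k)(\U_j^n)$. Since \eqref{autre_update} copies the first five updated entries of $\vect{U}_j^{n+1,-}$ verbatim onto $\U_j^{n+1}$, the update of these components is exactly \eqref{fvs} restricted to $i\in\{1,\dots,5\}$. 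As the first five components of $\mathbf{F}^\pm(\vect{U}_j^n,\vect{U}_{j+1}^n)$ depend on $(\U_j^n,\U_{j+1}^n)$ only through the entropy completion prescribed in the statement, they are, by definition, the first five components of $\mathcal{F}^\pm(\U_j^n,\U_{j+1}^n)$. One also notes here that the entropy components of $\vect{U}_j^{n+1,-}$ never enter the sequel, which is why the scheme may be written purely in the variable $\U$.

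The substance of the proof lies in the two energy components $i=5+k$. Here I would start from the energy update \eqref{ener_update} and recast its right-hand side as a telescoping flux difference. Its first line, $-\frac{\Delta t}{\Delta x}\big((\alpha_k\rho_k \E_k u_k+\alpha_k\pi_k u_k)_{j+\frac12}^n-(\alpha_k\rho_k \E_k u_k+\alpha_k\pi_k u_k)_{j-\frac12}^n\big)$, is already in conservative form once the interface energy flux is read as the $0^+$ trace of the energy-flux functional along the Riemann fan, $\vect{W}_r(0^+;\mathscr{M}(\vect{U}_L),\mathscr{M}(\vect{U}_R))$. Using the $0^+$ trace on both sides of each interface is legitimate because, by the Rankine--Hugoniot relation for \eqref{local_ener_relax_1} established in the proof of Proposition \ref{chap2lemmastab1}, the $0^-$ and $0^+$ traces of this flux differ only by the non-negative dissipation $\mathcal{Q}_{j+\frac12}^n$ supported on the set $\{u_2^*=0\}$; hence both traces agree wherever the indicator weights are nonzero and the substitution is consistent.

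It then remains to absorb the two interface source terms of \eqref{ener_update}, carrying the weights $\Ind_{\{(u_2^*)_{j-\frac12}^n\ge0\}}$ and $\Ind_{\{(u_2^*)_{j+\frac12}^n\le0\}}$, into the one-sided fluxes. The mechanism to exploit is the upwinding of the phase-fraction wave: the Dirac source $u_2^*\pi_1^*\,(\alpha_{k,R}-\alpha_{k,L})\,\delta_{x-u_2^*t}$ emitted at an interface is felt only by the downwind cell, i.e. by the left cell when $u_2^*<0$ and by the right cell when $u_2^*>0$. Accordingly the $\{u_2^*\le0\}$ share of each interface source is folded into the left-going flux $\mathcal{F}^-(\U_L,\U_R)$ and the $\{u_2^*\ge0\}$ share into the right-going flux $\mathcal{F}^+(\U_L,\U_R)$, which reproduces the indicator structure displayed in the statement. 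Substituting these fluxes into \eqref{fvs_2} and expanding, the $0^+$ traces telescope to give the conservative first line of \eqref{ener_update}, while the folded source terms land on the correct interfaces with the correct weights.

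The main obstacle is precisely this final verification: one must track the signs and the placement of the two indicators across the interfaces $j\pm\frac12$ with care, since the left/right roles of a flux swap between an interface's two adjacent cells, and one must check that the borderline value $u_2^*=0$ --- where both indicators are active --- produces no double counting, which it cannot since $u_2^*\pi_1^*$ then vanishes. Once this algebra is carried out, the two-step algorithm \eqref{fvs}--\eqref{autre_update}--\eqref{ener_update} is seen to coincide with the non-conservative finite volume scheme \eqref{fvs_2} for the stated fluxes, completing the proof.
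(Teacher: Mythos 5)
Your overall strategy is the paper's own: the authors dispose of this proposition with the single sentence that it ``follows from elementary verifications,'' and your plan fleshes out exactly those verifications --- the first five components by bookkeeping, the two energy components by folding the upwinded interface source terms into the one-sided fluxes. The treatment of the first five components is correct and complete. Two remarks on the energy components, one minor and one substantive.

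Minor: the digression on the $0^-$ versus $0^+$ traces and the Rankine--Hugoniot relation is not needed here. The interface quantity $(\alpha_k\rho_k \E_k u_k+\alpha_k \pi_k u_k)_{j+\frac 12}^{n}$ appearing in \eqref{ener_update} is \emph{defined} (in Proposition \ref{chap2lemmastab1}) as the $0^+$ trace, and the same $0^+$ trace appears in both $\mathcal{F}^-_{5+k}$ and $\mathcal{F}^+_{5+k}$, so the conservative part telescopes with no trace-matching argument; the $\mathcal{Q}$-on-$\{u_2^*=0\}$ issue belongs to the proof of the energy \emph{inequality}, not to this identity.

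Substantive: you correctly single out the sign bookkeeping as ``the main obstacle,'' but you then assert, without carrying it out, that it ``reproduces the indicator structure displayed in the statement.'' It does not. Writing $G_{j\pm\frac12}$ for the $0^+$ energy-flux traces and expanding \eqref{fvs_2} with the fluxes as displayed in the statement, the contribution of cell $j$ from interface $j-\tfrac12$ is $+\tfrac{\Delta t}{\Delta x}\mathcal{F}^+_{5+k}(\U_{j-1}^n,\U_j^n)$, which with the stated $\mathcal{F}^+_{5+k}=G-\mathbf{1}_{\{u_2^*\geq0\}}(u_2^*\pi_1^*)\Delta\alpha_k$ produces the source $-\tfrac{\Delta t}{\Delta x}\Ind_{\{(u_2^*)_{j-\frac12}^n\geq0\}}(u_2^*\pi_1^*)_{j-\frac12}^n\big((\alpha_k)_j^n-(\alpha_k)_{j-1}^n\big)$, i.e.\ the \emph{opposite} of the corresponding term in \eqref{ener_update}; the same sign reversal occurs at $j+\tfrac12$. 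The fluxes that actually make the two formulations coincide are
\begin{equation*}
\begin{aligned}
&\mathcal{F}_{5+k}^{-}(\U_{L},\U_{R}) = (\alpha_k\rho_k \E_k u_k+\alpha_k \pi_k u_k) \big ( \vect{W}_r (0^+;\cdot,\cdot ) \big ) - \mathbf{1}_{\left \lbrace u_2^* \leq 0 \right \rbrace }(u_2^* \, \pi_1^*) \left ( (\alpha_k)_{R}- (\alpha_k)_{L} \right ), \\
&\mathcal{F}_{5+k}^{+}(\U_{L},\U_{R}) = (\alpha_k\rho_k \E_k u_k+\alpha_k \pi_k u_k) \big ( \vect{W}_r (0^+;\cdot,\cdot ) \big ) + \mathbf{1}_{\left \lbrace u_2^* \geq 0 \right \rbrace }(u_2^* \, \pi_1^*) \left ( (\alpha_k)_{R}- (\alpha_k)_{L} \right ),
\end{aligned}
\end{equation*}
consistent with applying the pattern of \eqref{fluxm}--\eqref{fluxp} to the source $-u_2^*\pi_1^*\dv_x\alpha_k$ (note the minus sign), and, for $\mathcal{F}^-$, with the flux table of Appendix \ref{choixa1a2}. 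So the statement as printed carries a sign typo, and a proof that ends by declaring agreement with it is not a proof: the one computation you defer is precisely the one that fails. You should carry out the per-interface expansion explicitly and either correct the displayed fluxes or exhibit the discrepancy.
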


\begin{proof}
 The proposition follows from elementary verifications using equations  \eqref{fvs}-\eqref{autre_update}-\eqref{ener_update} and the expressions of the energy numerical fluxes $(\alpha_k\rho_k \E_k u_k+\alpha_k \pi_k u_k)_{j+\frac 12}$ given in Proposition \ref{chap2lemmastab1}.
\end{proof}

For the reader who is eager to rapidly implement the numerical scheme, we refer to Appendix \ref{choixa1a2} where the expressions of the numerical fluxes $\mathcal{F}^{\pm}(\U_{L},\U_{R})$ are given in detail.
\medskip

Recasting the scheme in a finite volume formulation with two interface fluxes $\mathcal{F}^{\pm}(\U_{L},\U_{R})$ is very interesting since it allows a nearly straightforward extension of the scheme to the 2D and 3D versions of the Baer-Nunziato model on unstructured meshes. Indeed, the multi-dimensional Baer-Nunziato model is invariant by Galilean transformations. Therefore, by assuming that, in the neighborhood of a multi-D cell interface, one has a local 1D Riemann problem in the orthogonal direction to the interface, it is possible to use the very same fluxes   $\mathcal{F}^{\pm}(\U_{L},\U_{R})$.

\subsection{Main properties of the scheme}
\medskip
We may now state the following theorem, which gathers the main properties of this scheme,  and which constitutes the main result of the paper.

\begin{them}
The finite volume scheme \eqref{fvs_2} for the Baer-Nunziato model has the following properties:
%\begin{itemize}
 %\item \textbf{Consistency.}  The numerical fluxes satisfy $\mathbf{F}^{-}(\vect{U},\vect{U})=\mathbf{F}^{+}(\vect{U},\vect{U})=\textnormal{\textbf f}(\vect{U})$.
 
 \begin{itemize}
  \item \textbf{Positivity:} Under the CFL condition (\ref{chap2cfl}), the scheme preserves positive values of the phase fractions, densities and internal energies: for all $n\in\xN$, if ($\U_j^n\in\Omega_{\U}$ for all $j\in\Z$), then $0 <(\alpha_{k})_j^{n+1} < 1$, $(\alpha_k \rho_k)_j^{n+1} >0$, and  $(E_k-u_k^2/2)_j^{n+1} >0$ for $k=1,2$ and all $j\in\Z$, \ie\, ($\U_j^{n+1}\in \Omega_{\U}$ for all $j\in\Z$). Moreover, if the thermodynamics of phase $k$ follows an ideal gas or a stiffened gas \eos, then the finite volume scheme \eqref{fvs_2} preserves real values for the speed of sound of phase $k$: for all $n\in\xN$, if $c_k\left ((\rho_k)_j^n,(e_k)_j^n \right )^2>0$, for all $j\in\Z$, then  $c_k\left ((\rho_k)_j^{n+1},(e_k)_j^{n+1} \right )^2>0$, for all $j\in\Z$ (see Proposition \ref{propspectrebn} and Remark \ref{remstiffeos} for the definition of $c_k(\rho_k,e_k)^2$).

  \item \textbf{Conservativity:} The discretizations of the partial masses $\alpha_k\rho_k,\,k\in\unde$, the total mixture momentum $\alpha_1\rho_1 u_1+\alpha_2\rho_2 u_2$ and the total mixture energy $\alpha_1\rho_1 E_1+\alpha_2\rho_2 E_2$, are conservative.
  
  \item \textbf{Discrete entropy inequalities.} Assume that the relaxation parameters $(a_k)_{j+\frac 12}^n,\, k=1,2$ satisfy Whitham's condition at each time step and each interface, \ie\, that for all $n\in\xN$, $j\in\Z$, $(a_k)_{j+\frac 12}^n,\, k=1,2$ are large enough so that
  \begin{equation}
  \label{whithamter}
  ((a_k)_{j+\frac 12}^n)^2 > -\dv_{\tau_k}\PP_k(\T_k,s_k),
  \end{equation}
  for all $\T_k$ and $s_k$ in the solution $\xi\mapsto \vect{W}_r\left (\xi;\mathscr{M}(\vect{U}_{j}^n),\mathscr{M}(\vect{U}_{j+1}^n)\right)$. Then, the values $\U_{j}^n$ computed by the scheme satisfy the following discrete entropy inequalities: for $k=1,2$:
  \begin{equation}
  \label{ent_prop_final}
  (\alpha_k \rho_k s_k)(\U_j^{n+1}) \leq  (\alpha_k\rho_k s_k )(\U_j^{n})  - \frac{\Delta t}{\Delta x} \left (  (\alpha_k\rho_k s_k u_k)_{j+\frac 12}^{n}- (\alpha_k\rho_k s_k u_k)_{j-\frac 12}^{n}\right),
  \end{equation}
where the entropy fluxes $(\alpha_k\rho_k s_k u_k)_{j+\frac 12}^{n}$ are defined in Proposition \ref{chap2lemmastab1}. These inequalities are discrete counterparts of the entropy inequalities \eqref{BN_entrop_kneq} satisfied by the admissible weak solutions of the Baer-Nunziato model \eqref{BN_ener}.

 \end{itemize}

\end{them}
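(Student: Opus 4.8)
The plan is to derive every property from the already-established behaviour of the auxiliary scheme (Proposition \ref{chap2lemmastab1}) combined with the energy--entropy duality of Proposition \ref{prop_ent_convex}. The pivotal observation is that the right-hand side of the conservative energy update \eqref{ener_update} is \emph{identical} to the right-hand side of the discrete energy dissipation inequality \eqref{ener_prop_bis}. Comparing the two therefore gives, for $k\in\unde$ and every $j\in\Z$,
\begin{equation}
\label{plan_energy_comparison}
(\alpha_k\rho_k E_k)(\vect{U}_j^{n+1,-}) \leq (\alpha_k\rho_k E_k)_j^{n+1}.
\end{equation}
Since Step 2 leaves $\alpha_k$, $\alpha_k\rho_k$ and $\alpha_k\rho_k u_k$ unchanged (see \eqref{autre_update}), the corrected state shares with $\vect{U}_j^{n+1,-}$ the same phase fraction, partial density and phasic velocity $u_k$; only the phasic total energy has been raised. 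This single inequality, read through the duality, drives both the internal-energy positivity and the entropy inequalities.

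For \textbf{positivity}, the phase fractions and partial masses are copied verbatim from the auxiliary update by \eqref{autre_update}, so $0<(\alpha_k)_j^{n+1}<1$ and $(\alpha_k\rho_k)_j^{n+1}>0$ follow directly from the positivity part of Proposition \ref{chap2lemmastab1}. For the internal energy I would use that $u_k$ is unchanged, so the kinetic energy $\tfrac12(\alpha_k\rho_k u_k^2)$ is the same for $\vect{U}_j^{n+1,-}$ and $\U_j^{n+1}$; subtracting this common kinetic energy from \eqref{plan_energy_comparison} yields $(E_k-u_k^2/2)_j^{n+1}\geq e_k(\vect{U}_j^{n+1,-})>0$, the last inequality being the internal-energy positivity already proved for the auxiliary scheme. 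The stiffened/ideal gas case is identical: invoking $\rho_k c_k^2=\gamma_k(\gamma_k-1)(\rho_k e_k-p_{\infty,k})$ from Remark \ref{remstiffeos}, the quantity $\rho_k e_k-p_{\infty,k}$ can only increase from Step 1 to Step 2 (same $\rho_k$, raised $e_k$), and it is positive after Step 1 by Proposition \ref{chap2lemmastab1}, giving $c_k((\rho_k)_j^{n+1},(e_k)_j^{n+1})^2>0$.

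For \textbf{conservativity}, the partial masses and the total mixture momentum inherit their conservative form directly from \eqref{autre_update} together with \eqref{mass_prop} and \eqref{mom_prop} of Proposition \ref{chap2lemmastab1}. Total energy is the only genuinely new point: summing the energy update \eqref{ener_update} over $k=1,2$, the conservative flux parts telescope as usual, while the remaining non-conservative contributions are proportional to differences of $(\alpha_k)^n$ between neighbouring cells. Since $\alpha_1+\alpha_2\equiv1$ forces $(\alpha_1)_i^n-(\alpha_1)_{i'}^n=-\big((\alpha_2)_i^n-(\alpha_2)_{i'}^n\big)$ for any two cells, these contributions cancel when summed over $k$, and the total mixture energy update takes conservative form.

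The \textbf{entropy inequalities} are where the duality is essential, and I expect this to be the delicate step. The starting point is that Step 1 advances the partial entropies \emph{conservatively}: by the entropy-conservation property \eqref{ent_prop} of Proposition \ref{chap2lemmastab1},
\begin{equation*}
(\alpha_k\rho_k s_k)(\vect{U}_j^{n+1,-}) = (\alpha_k\rho_k s_k)(\U_j^{n}) - \frac{\Delta t}{\Delta x}\Big((\alpha_k\rho_k s_k u_k)_{j+\frac12}^n-(\alpha_k\rho_k s_k u_k)_{j-\frac12}^n\Big),
\end{equation*}
whose right-hand side is exactly that of the target inequality \eqref{ent_prop_final}. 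It then remains to prove the pointwise comparison $(\alpha_k\rho_k s_k)(\U_j^{n+1})\leq(\alpha_k\rho_k s_k)(\vect{U}_j^{n+1,-})$. Here I would freeze the common variables $(\alpha_k,\alpha_k\rho_k,\alpha_k\rho_k u_k)$ and view $\alpha_k\rho_k s_k$ as a function of the single remaining variable $\alpha_k\rho_k E_k$; by Proposition \ref{prop_ent_convex}(i) this function has derivative $-1/T_k<0$ and is thus strictly decreasing. Since \eqref{plan_energy_comparison} raises the phasic energy while keeping all other variables fixed, the associated partial entropy can only decrease, giving the required comparison; combined with the display above this yields \eqref{ent_prop_final}. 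The main care needed is to match the two parametrizations of the thermodynamic state (entropy-based for the auxiliary model, energy-based for Baer--Nunziato) so that the monotonicity applies to exactly the states produced by the two steps: at equal energy the energy-based entropy of the Baer--Nunziato state coincides with the carried entropy of $\vect{U}_j^{n+1,-}$ because $e\mapsto s_k(\rho_k,e)$ inverts $s\mapsto e_k(\tau_k,s)$. Once this identification is made, consistency of these discrete inequalities with \eqref{BN_entrop_kneq} is immediate from consistency of the entropy fluxes.
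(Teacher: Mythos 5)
Your proposal is correct and follows essentially the same route as the paper's own proof: the key inequality $(\alpha_k\rho_k E_k)(\vect{U}_j^{n+1,-})\leq(\alpha_k\rho_k E_k)_j^{n+1}$ obtained by comparing \eqref{ener_prop_bis} with \eqref{ener_update}, the transfer to internal energies and to $\rho_k e_k-p_{\infty,k}$ using the unchanged partial mass and momentum, and the entropy inequalities via the conservative update \eqref{ent_prop} combined with the monotonicity $\dv_{\alpha_k\rho_k E_k}(\alpha_k\rho_k s_k)=-1/T_k$ from Proposition \ref{prop_ent_convex}. The only material you add is the explicit cancellation argument (via $\alpha_1+\alpha_2=1$) for the conservativity of the total mixture energy, which the paper states without proof; that argument is correct.
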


\medskip
Note that preserving positive values of the phase fractions, the phasic densities and also the phasic internal energies altogether is an unprecedented result. Furthermore, to our knowledge, this scheme is the first scheme approximating the solutions of the Baer-Nunziato model, for which discrete entropy inequalities \eqref{BN_entrop_kneq} are proven.

\begin{proof}
The positivity of the phase fractions $(\alpha_{k})_j^{n+1}$ and partial masses $(\alpha_k \rho_k)_j^{n+1}$ follows directly from Proposition \ref{chap2lemmastab1}. To check that the proposed algorithm preserves the positivity of the internal energies, namely $(e_k)^{n+1}_j=(E_k-u_k^2/2)_j^{n+1} > 0$, it is sufficient to notice that before the update of the energy in the second step, the internal energy $(e_k)^{n+1,-}_j := e_k((\tau_k)^{n+1,-}_j,(s_k)^{n+1,-}_j)$ is positive according to Proposition \ref{chap2lemmastab1}. The second step results in increasing the total energies and one has $(\alpha_k\rho_k E_k)_j^{n+1} \geq (\alpha_k\rho_k E_k)(\vect{U}_j^{n+1,-})$ by \eqref{ener_prop_bis}-\eqref{ener_update}, while $(\alpha_k \rho_k)_j^{n+1}= (\alpha_k \rho_k)_j^{n+1,-}$, which yields $(E_k)^{n+1}_j \ge E_k(\vect{U}^{n+1,-}_j)$. Now, as the kinetic energy $((u_k)^{n+1,-}_j)^2/2$ is unchanged by the second step and $E_k(\vect{U}^{n+1,-}_j)= (e_k)^{n+1,-}_j-((u_k)^{n+1,-}_j)^2/2$, we infer that $(e_k)^{n+1}_j  \ge (e_k)^{n+1,-}_j >0$ and hence the required positivity property for the internal energies.

\medskip
In the same way, for an ideal gas or a stiffened gas \eos, we prove that the scheme preserves real values of the speed of sound. One has $\rho_k c_k(\rho_k,e_k)^2 = \gamma_k(\gamma_k-1)(\rho_k e_k- p_{\infty,k})$ (with $ p_{\infty,k}=0$ for an ideal gas). If $c_k\left ((\rho_k)_j^n,(e_k)_j^n \right )^2>0$, then, before the update of the energy in the second step, the quantity $(\rho_k e_k- p_{\infty,k})^{n+1,-}_j$ is positive by Proposition \ref{chap2lemmastab1}. The update of the energy in the second step amounts to increasing the internal energy $(e_k)^{n+1}_j>(e_k)^{n+1,-}_j$ while keeping the density unchanged $ (\rho_k)_j^{n+1}= (\rho_k)_j^{n+1,-}$, which yields $(\rho_k e_k- p_{\infty,k})_j^{n+1}>0$, hence the positivity of $c_k(\rho_k,e_k)^2$.
 
\medskip

We now prove the discrete entropy inequalities \eqref{ent_prop_final}. The first step provides a conservative update of the phasic entropy equations. Indeed, the last two components of the vector equation \eqref{fvs} yield
\begin{equation}
 \label{ent_prop_bis}
  (\alpha_k \rho_k s_k)_j^{n+1,-}  =   (\alpha_k\rho_k s_k )(\U_j^{n})  - \frac{\Delta t}{\Delta x} \left (  (\alpha_k\rho_k s_k u_k)_{j+\frac 12}^{n}- (\alpha_k\rho_k s_k u_k)_{j-\frac 12}^{n}\right).
 \end{equation}
Then, thanks to the thermodynamics assumptions, we know from Proposition \ref{prop_ent_convex} that 
$$
\dv_{\alpha_k \rho_k E_k}(\alpha_k \rho_k s_k)(\U)=-1/T_k.
$$
Consequently, we infer from $(\alpha_k\rho_k E_k)_j^{n+1} \geq (\alpha_k\rho_k E_k)(\vect{U}_j^{n+1,-}) $ that $(\alpha_k \rho_k s_k)(\U_j^{n+1})\leq (\alpha_k \rho_k s_k)_j^{n+1,-}$. Injecting in \eqref{ent_prop_bis} yields the discrete entropy inequalities \eqref{ent_prop_final}.

\end{proof}

\medskip
%\begin{rem}
For most of equations of state, that are given as a function $p_k(\rho_k,e_k)$, the quantity $\alpha_k \rho_k s_k$ cannot be expressed as an explicit function of $\U$, which makes it even impossible to compute the time step initial values of the entropies $(\alpha_k\rho_k s_k )(\U_j^{n})$. Still, this does not prevent the discrete inequalities \eqref{ent_prop_final} from holding true.
%\end{rem}

\medskip
%\begin{rem}
The impossibility, for many equations of state, to express the entropies $s_k$ explicitly does not prevent the computation of the numerical fluxes $\mathcal{F}^{\pm}_{i}(\U_j^{n},\U_{j+1}^n),\,i=1,..,7$, for the updates of $(\alpha_1)_j^{n+1}$, $(\alpha_k \rho_k)_j^{n+1}$, $(\alpha_k \rho_k u_k)_j^{n+1}$ in the first step, and the updates of $(\alpha_k\rho_k E_k)_j^{n+1}$ in the second step. Indeed, even though these numerical fluxes involve terms of the form $e_k(\T_k,s_k)$ and $\pi_k(\tau_k,\T_k,s_k)$ to be evaluated on the relaxation Riemann solution of the first step, the discussion of Section \ref{subsec_Riemsol} shows that these functions are solely evaluated on the piecewise constant initial data in terms of $e_k((\T_{k})_j^n,(s_{k})_j^n)$ and $\PP_k((\T_{k})_j^n,(s_{k})_j^n)$. But observe that by the thermodynamics, $e_k((\T_{k})_j^n,(s_{k})_j^n)$ is nothing else but $(e_k)_j^n=(E_k-u_k^2/2)_j^n$ and $\PP_k((\T_{k})_j^n,(s_{k})_j^n)$ is equal to $p_k((\rho_k)_j^n,(e_k)_j^n)$. Hence, even though the entropy is used for the analysis of the numerical method, one may implement this scheme even for general and possibly incomplete equations of state that are given as a function $p_k(\rho_k,e_k)$, since the numerical fluxes may still be computed at each time step in terms of the initial unknowns $\U_j^n,j\in\Z$. In particular, this allows the use of tabulated equations of state. 
%\end{rem}

\section{Numerical tests}
\label{numtest}
In this section, we present Riemann-type test-cases on which the performance of the relaxation scheme is tested and compared with that of three other schemes: Schwendeman-Wahle-Kapila's first order Godunov-type scheme \cite{SWK}, Toro-Tokareva's finite volume HLLC scheme \cite{TT} and Rusanov's scheme (a Lax-Friedrichs type scheme, see \cite{GHS}). The thermodynamics follow either an ideal gas or a stiffened gas law:
$$
p_k(\rho_k,e_k)=(\gamma_k-1)\rho_k e_k- \gamma_k p_{\infty,k},
$$
where $\gamma_k>1$ and $p_{\infty,k}\geq 0$ are two constants.
The \eos\, parameters of each test-case are given in Table \ref{Table1} as well as the initial discontinuity position, the final time of the simulation and the CFL number. The initial and intermediate states of each solution are given in Tables \ref{Table_TC1} to \ref{Table_TC5}. The $u_2$-contact discontinuity separates two regions denoted $-$ and $+$ respectively on the left and right sides of the discontinuity. If the $u_1$-contact discontinuity has non-zero strength, an additional region $L*$ or $R*$ also exists according to the sign of $u_2-u_1$ as described in Figure \ref{Fig_struct}.

\medskip
We recall that the scheme relies on a relaxation Riemann solver which requires solving a fixed point in order to compute, for every cell interface $x_{j+\frac 12}$, the zero of a scalar function (see eq. \eqref{chap3mel} in Appendix \ref{choixa1a2}). A dichotomy (bisection) method is used in order to compute this solution. The iterative procedure is stopped when the error is less than $ 10^{-12}$.

\newcolumntype{g}{>{\centering\arraybackslash}p{26mm}<{}}
\newcolumntype{h}{>{\centering\arraybackslash}p{21mm}<{}}

\begin{table}[ht!]
\centering
\begin{tabular}{|hhhhhh|}
\hline
			& Test 1 	& Test 2		& Test 3		& Test 4		& Test 5\\
\hline
$\gamma_1$ 		&$1.4$		&$1.4$			&$1.4$			&$3$			&$3$	   \\
$p_{\infty,1}$		&$0$		&$0$			&$0$			&$0$			&$0$  \\
$\gamma_2$		&$1.4$		&$3$			&$1.4$			&$1.4$			&$1.4$   \\
$p_{\infty,2}$		&$0$		&$100$			&$0$			&$0$			&$0$	   \\
$x_0$			&$0$		&$0.8$			&$0.5$			&$0$			&$0$	   \\
$T_{\rm max}$		&$0.15$		&$0.007$		&$0.15$			&$0.15$			&$0.05$	   \\
${\rm CFL}$		&$0.45$		&$0.45$			&$0.45$			&$0.45$			&$0.45$	   \\
\hline
\end{tabular}
\protect \parbox[t]{13cm}{\caption{E.O.S. parameters, initial discontinuity position, final time, Courant-Friedrichs-Lewy number.\label{Table1}}}

\end{table}

\begin{figure}[ht!]
\centering
\begin{tabular}{cc}
\includegraphics[width=6cm,height=4cm]{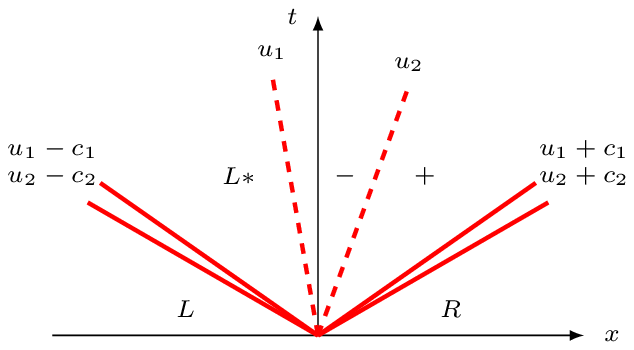}&
\includegraphics[width=6cm,height=4cm]{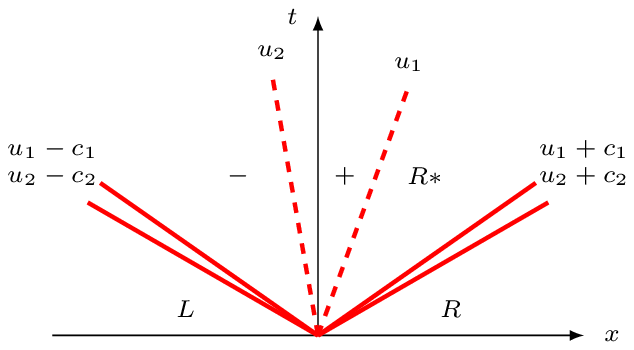}
\end{tabular}
\protect \parbox[t]{13cm}{\caption{Structure of the Riemann solutions, notations for the intermediate states.\label{Fig_struct}}}
\end{figure}

%\clearpage

%%%%%%%%%%%%%%%%%%%%%%%%%% TC1  %%%%%%%%%%%%%%%%%%%%%%%%%%%%%%%%%%%%%%%%%%%%%%%%%%%%%%%%%%
\subsection{Results for Test-case 1}

\begin{table}[ht!]
\centering
\begin{tabular}{|hhhhhh|}
\hline
		& Region $L$ 	& Region $L*$		& Region $-$		& Region $+$		& Region $R$ \\
\hline
$\alpha_1$ 	&$0.2$		&$0.2$			&$0.2$			&$0.7$			& $0.7$	   \\
$\rho_1$	&$0.21430$	&$0.35$			&$0.698$		&$0.90583$		&$0.96964$  \\
$u_1$		&$-0.02609$	&$-0.7683$		&$-0.7683$		&$-0.11581$		&$-0.03629$   \\
$p_1$		&$0.3$		&$0.6045$		&$0.6045$		&$0.87069$		&$0.95776$	   \\
$\rho_2$	&$1.00003$	&$1.00003$		&$0.9436$		&$1.0591$		&$0.99993$	   \\
$u_2$		&$0.00007$	&$0.00007$		&$0.0684$		&$0.0684$		&$-0.00004$	   \\
$p_2$		&$1.0$		&$0.9219$		&$0.9219$		&$1.08383$		&$1.0$	   \\
\hline
\end{tabular}
\protect \parbox[t]{13cm}{\caption{Test-case 1: Left, right and intermediate states of the exact solution.\label{Table_TC1}}}
\end{table}

In this first test-case, both phases follow an ideal gas \eos\, (see Table \ref{Table1}). The wave pattern for phase 1 consists of a left-traveling shock, a material contact discontinuity $u_1$, a phase fraction discontinuity of velocity $u_2$ and a right-traveling rarefaction wave. For phase 2 the wave pattern is composed of a left-traveling rarefaction wave, the phase fraction discontinuity, and a right-traveling shock.

\medskip
In Figure \ref{Figcase1}, the approximate solution computed with the relaxation scheme is compared with the exact solution, and with the approximate solutions obtained with the Godunov-type scheme, the HLLC scheme and Rusanov's scheme. The results show that unlike Rusanov's scheme, the three other methods, which give very similar results, correctly capture the intermediate states even for this rather coarse mesh of $100$ cells. This coarse mesh is a typical example of an industrial mesh, reduced to one direction, since $100$ cells in 1D correspond to a $10^6$-cell mesh in 3D. It appears that the contact discontinuity is captured more sharply by the relaxation scheme, the Godunov-type scheme and the HLLC scheme than by Rusanov's scheme for which the numerical diffusion is larger. However, we observe that the Godunov-type and HLLC schemes seem to be slightly more accurate that the relaxation scheme when capturing the $u_2$-contact discontinuity for the phase 1 variables. Indeed, for the relaxation scheme, there is one more point within the contact discontinuity for these variables. We can also see that for the phase 2 variables, there are no oscillations as one can see for Rusanov's scheme: the curves are monotone between the intermediate states. For phase 1, the intermediate states are captured by the relaxation, the Godunov-type and the HLLC methods, while with Rusanov's scheme, this weak level of refinement is clearly not enough to capture any intermediate state. These observations assess that, for the same level of refinement, the relaxation method (as well as the Godunov-type scheme and the HLLC scheme) is much more accurate than Rusanov's scheme.

\medskip
A mesh refinement process has also been implemented in order to check numerically the convergence of the method, as well as it's performances in terms of CPU-time cost. For this purpose, we compute the discrete $L^1$-error between the approximate solution and the exact one at the final time $T_{\rm max}=N\Delta t$, normalized by the discrete $L^1$-norm of the exact solution:
\begin{equation}
\label{chap3error}
 E(\Delta x) = \dfrac{\ds \sum_{j}|\phi_j^N-\phi_{ex}(x_j,T_{\rm max})| \Delta x}{\ds \sum_{j}|\phi_{ex}(x_j,T)| \Delta x},
\end{equation}
where $\phi$ is any of the non conservative variables $(\alpha_1, \rho_1, u_1,p_1, \rho_2, u_2,p_2)$. The calculations have been implemented on several meshes composed of $100 \times 2^n$ cells with $n=0,1,..,10$ (knowing that the domain size is $L=1$). In Figure \ref{Figcase1bis}, the error $E(\Delta x)$ at the final time $T_{\rm max}=0.15$, is plotted against $\Delta x$ in a $log-log$ scale. We can see that all the errors converge towards zero with the expected order of $\Delta x^{1/2}$, except the error for $u_2$ which seems to converge with a higher rate. However, $\Delta x^{1/2}$ is only an asymptotic order of convergence, and in this particular case, one would have to implement the calculation on more refined meshes in order to reach the theoretically expected order of $\Delta x^{1/2}$.

\medskip
Figure \ref{Figcase1bis} also shows the error on the non conservative variables with respect to the CPU-time of the calculation expressed in seconds for both the relaxation scheme and Rusanov's scheme. Each point of the plot corresponds to one single calculation for a given mesh size. One can see that, if one prescribes a given level of the error, the computational cost of Rusanov's scheme is significantly higher than that of the relaxation method for all the variables. For instance, for the same error on the phase 1 density $\rho_1$, the gain in computational cost is more than a hundred times when using the relaxation method rather than Rusanov's scheme which is a quite striking result. Indeed, even if Rusanov's scheme is known for its poor perfrmances in terms of accuracy, it is also an attractive scheme for its reduced complexity. This means that the better accuracy of the relaxation scheme (for a fixed mesh) widely compensates for its (relative) complexity.

\begin{figure}[ht!]
\scriptsize
\begin{center}
\begin{tabular}{cc}
Wave structure & $\alpha_1$ \\[1ex]
\includegraphics[width=7cm,height=4.5cm]{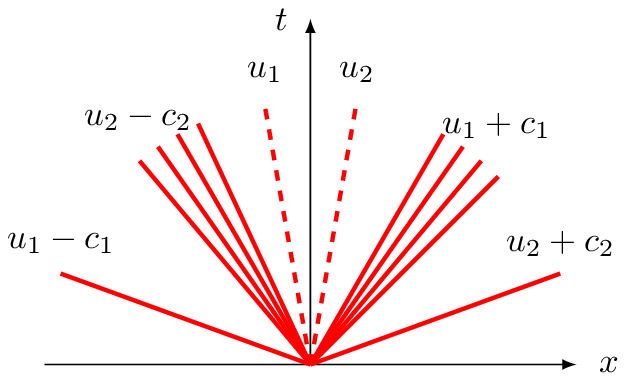}&
\includegraphics[width=7cm,height=4.5cm]{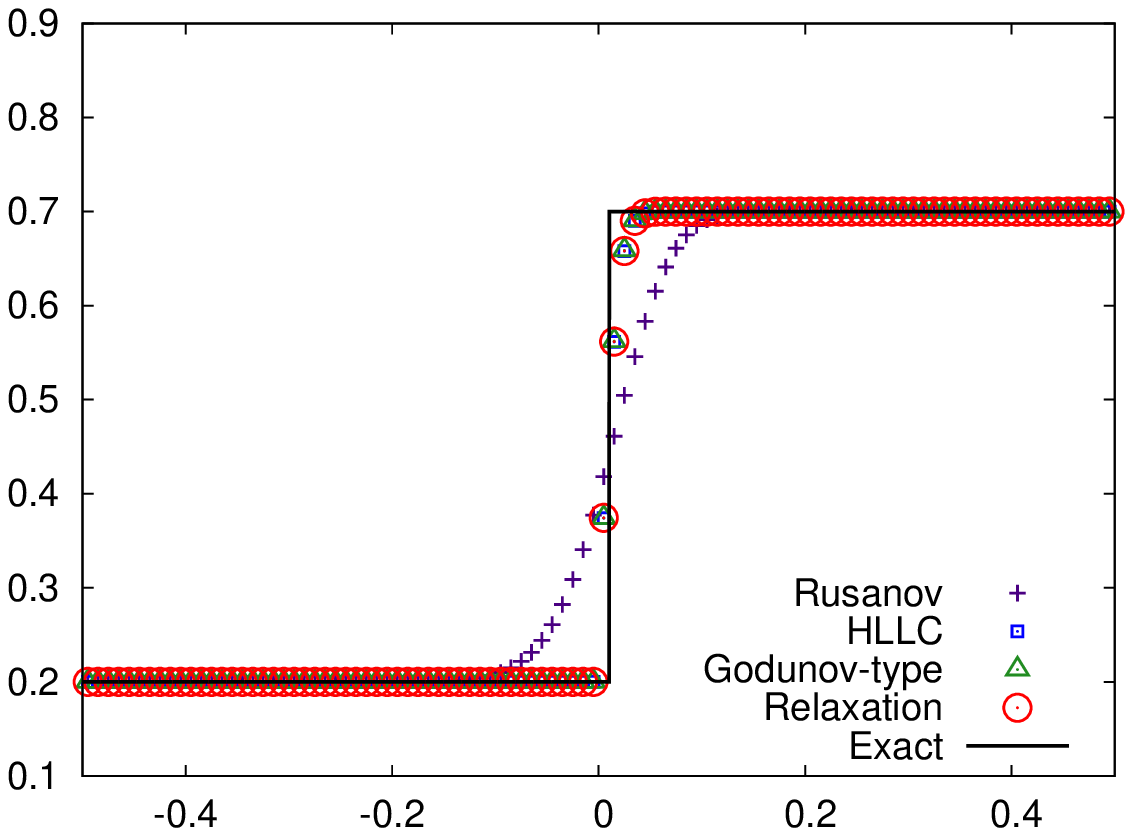} \\[3ex]
$u_1$ & $u_2$\\[1ex]
\includegraphics[width=7cm,height=4.5cm]{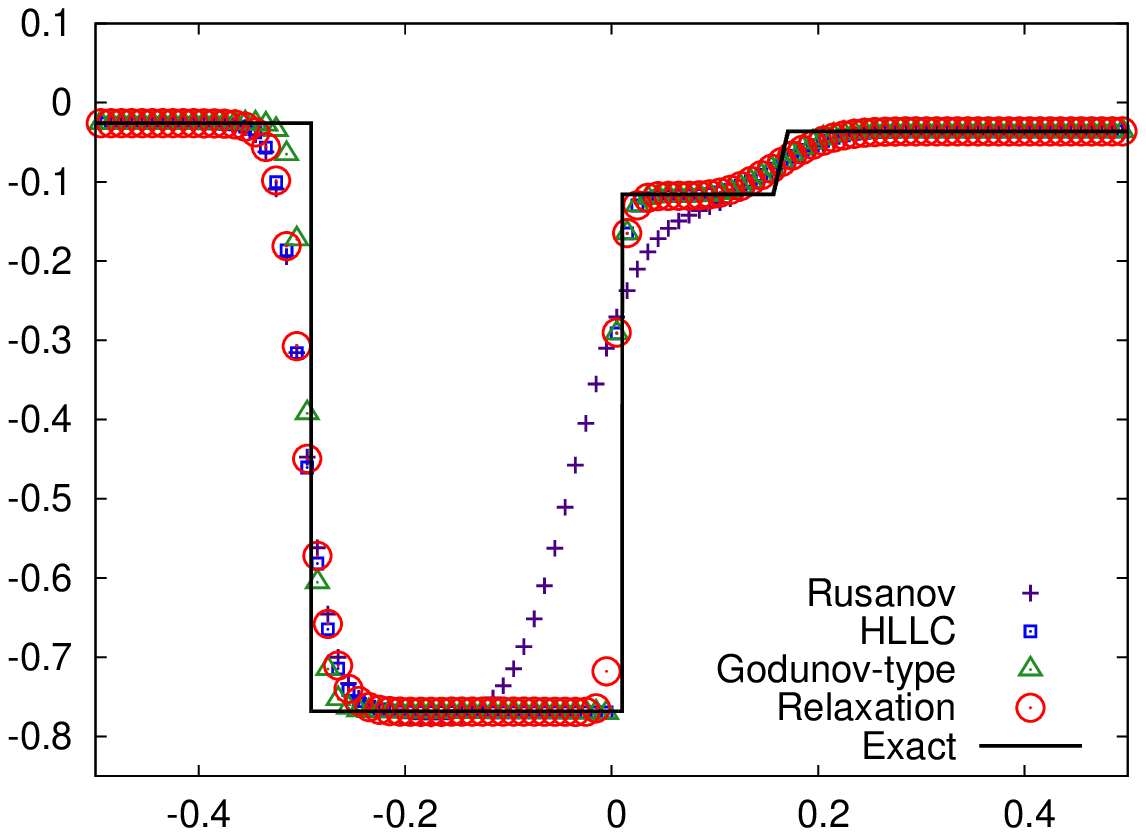} &
\includegraphics[width=7cm,height=4.5cm]{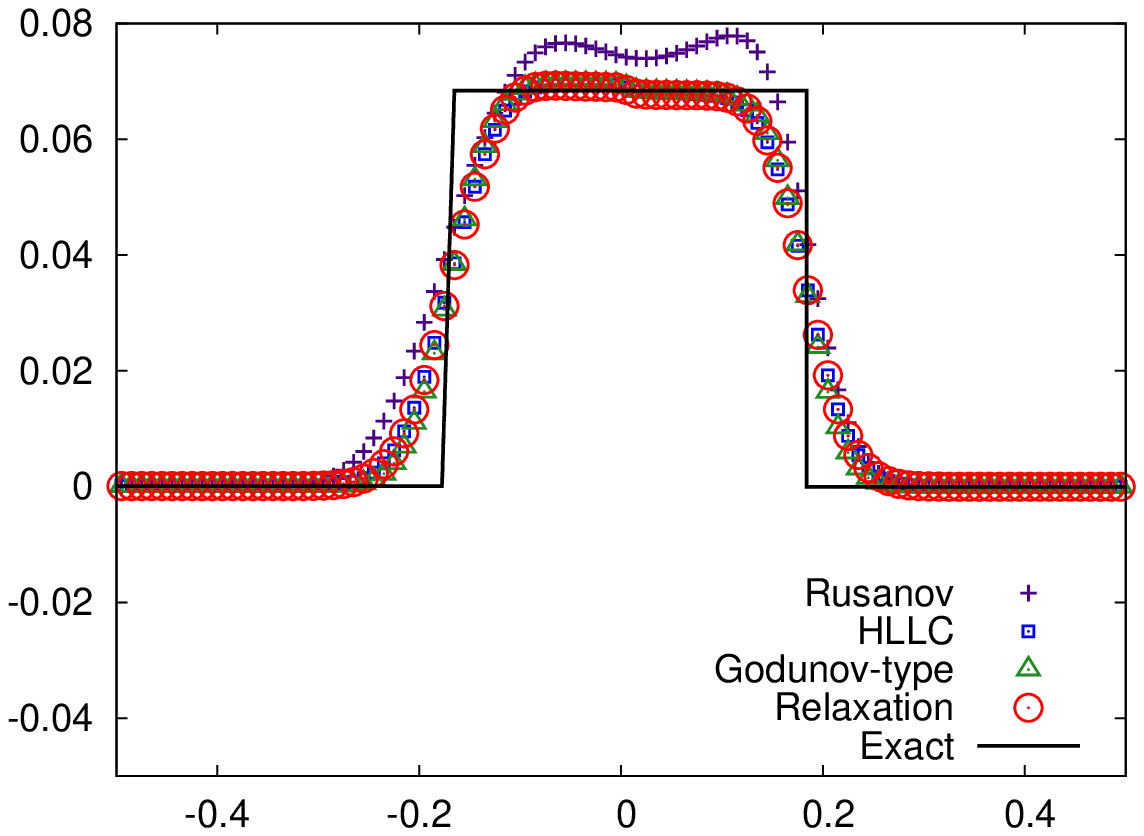} \\[3ex] 
$\rho_1$ & $\rho_2$ \\[1ex]
\includegraphics[width=7cm,height=4.5cm]{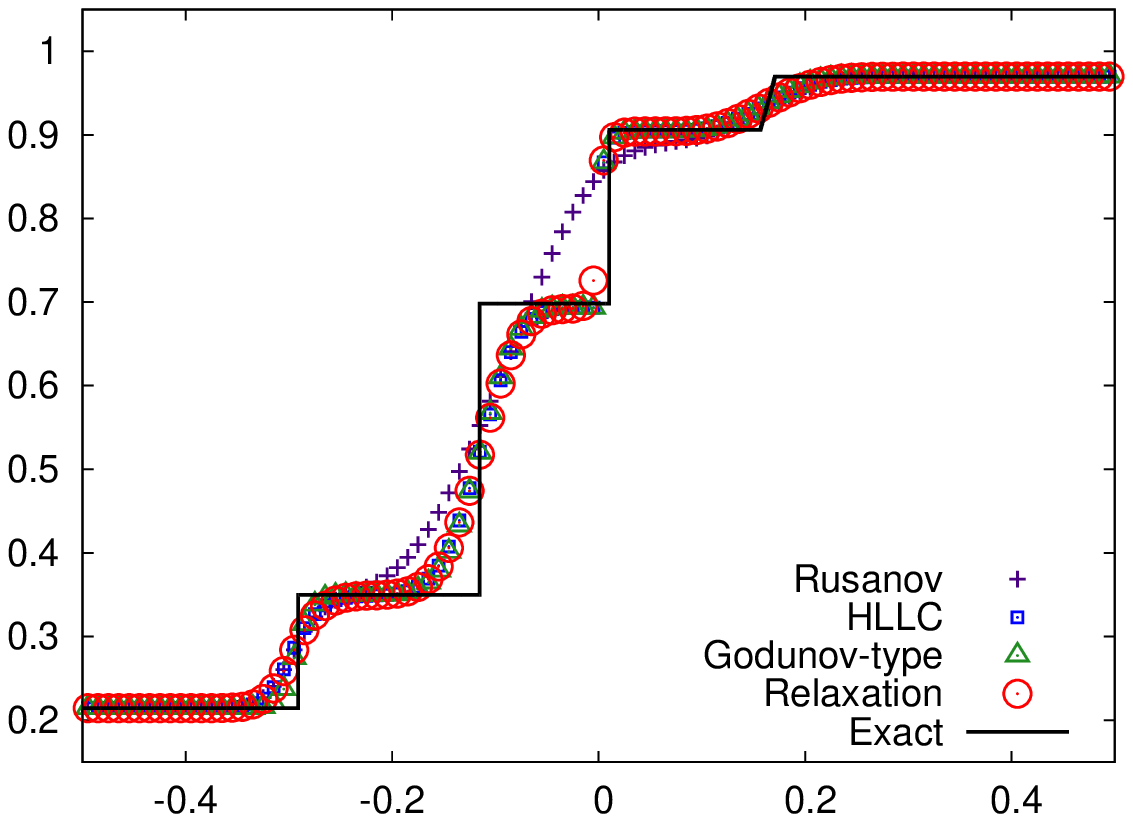} &
\includegraphics[width=7cm,height=4.5cm]{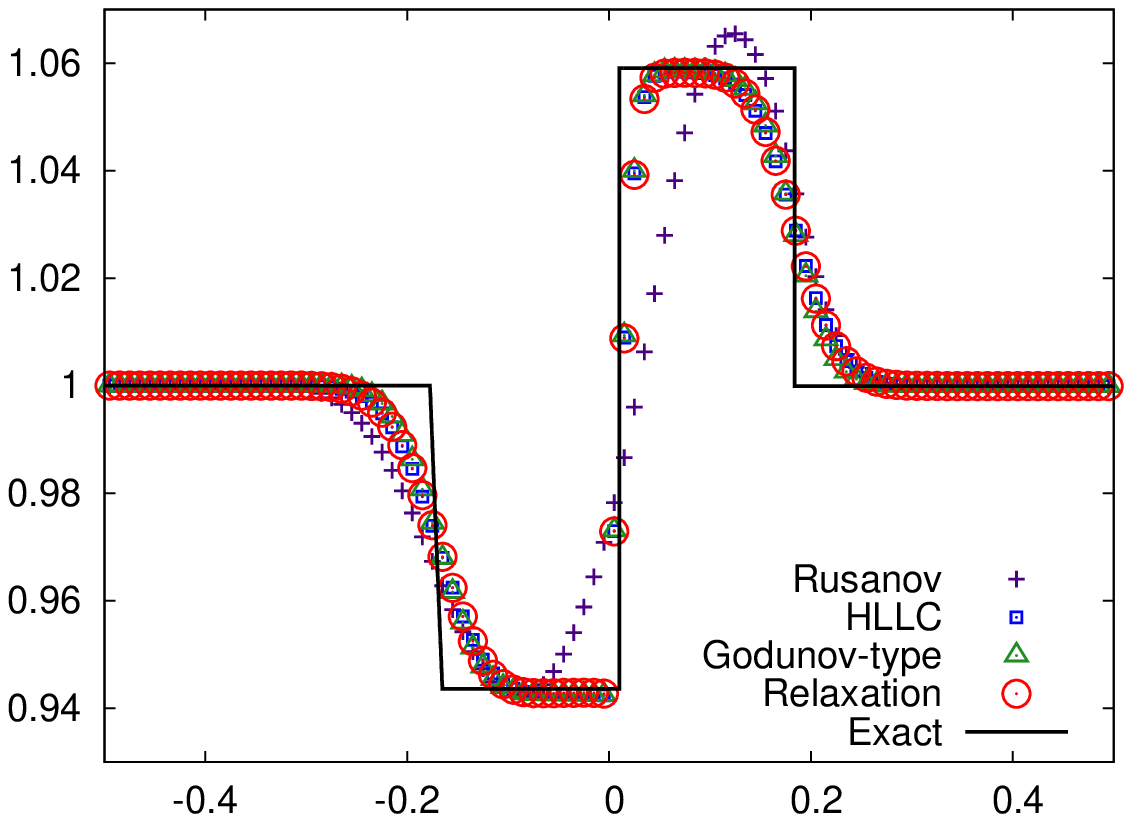}\\[3ex]
$p_1$  & $p_2$\\[1ex]
\includegraphics[width=7cm,height=4.5cm]{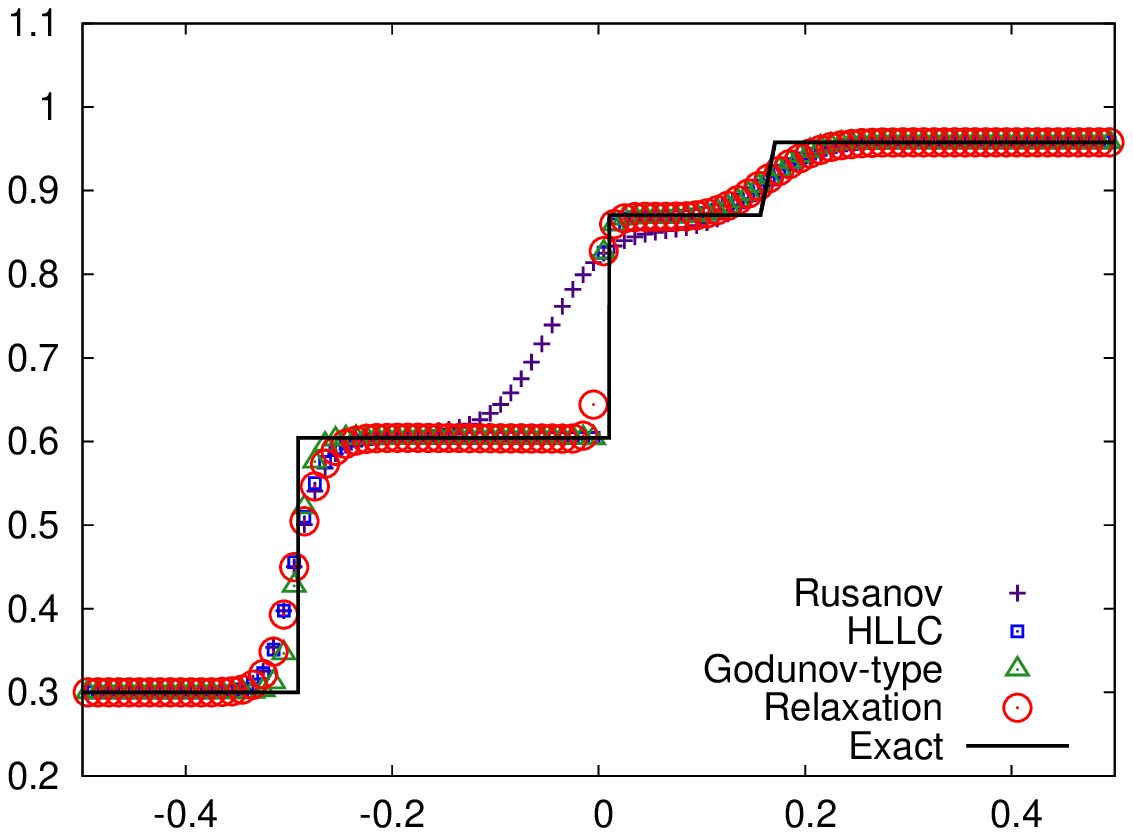} &
\includegraphics[width=7cm,height=4.5cm]{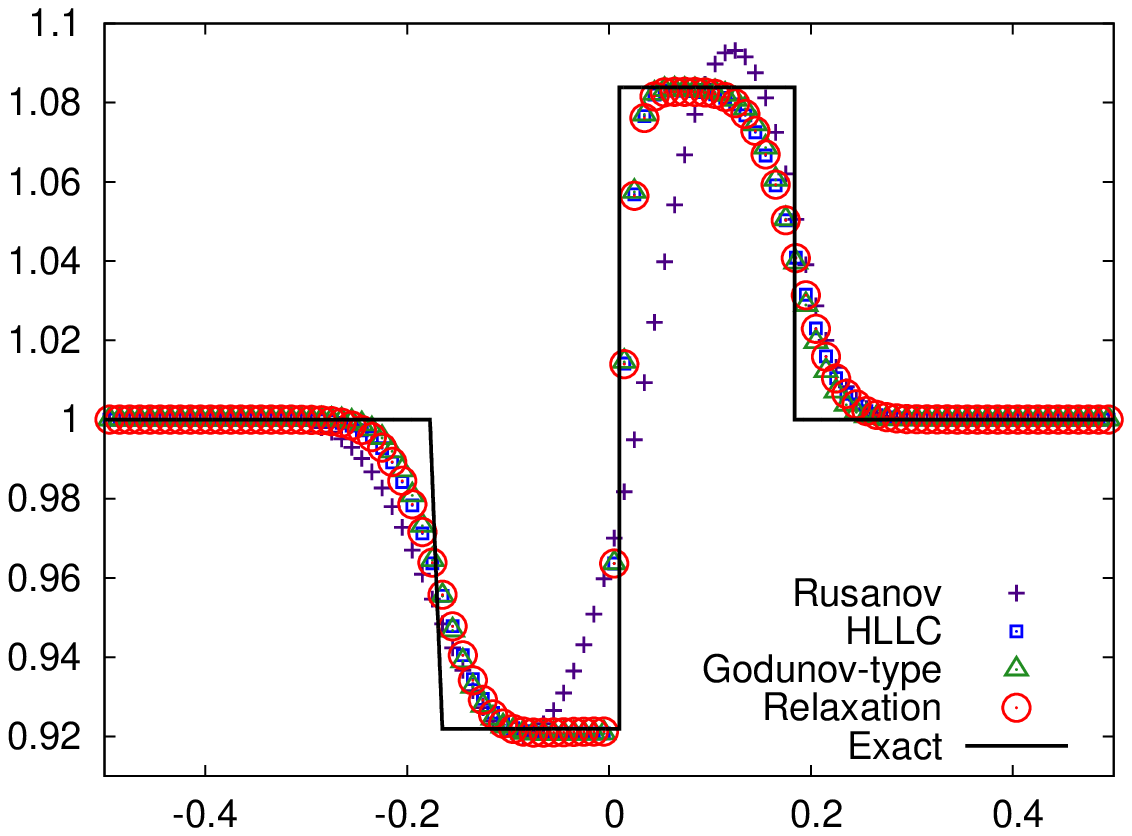}
\end{tabular}
\protect \parbox[t]{13cm}{\caption{Test-case 1: Structure of the solution and space variations of the physical variables at the final time $T_{\rm max}=0.15$. Mesh size: $100$ cells.\label{Figcase1}}}
\end{center}
\normalsize
\end{figure}

\begin{figure}[ht!]
\begin{center}
 \begin{tabular}{cc}
 & $\alpha_1$\\[1ex]
\includegraphics[width=7cm,height=4cm]{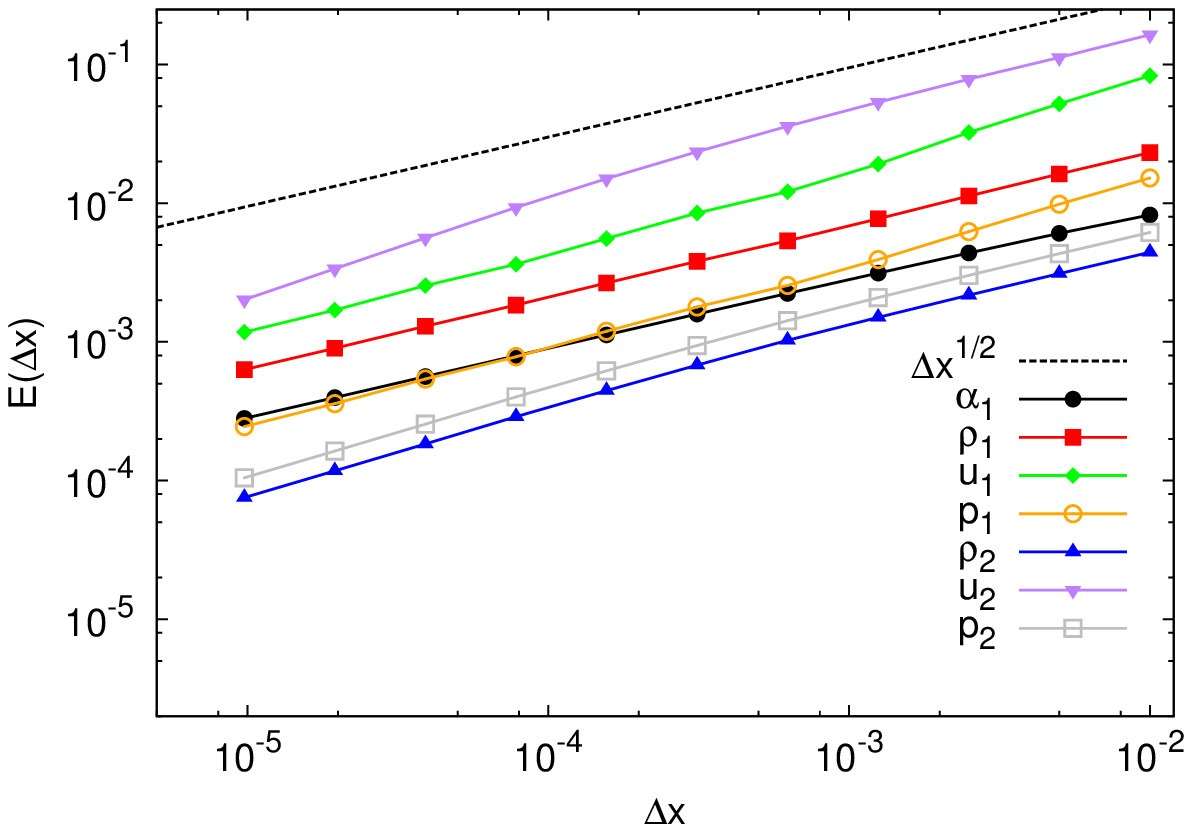}&
\includegraphics[width=7cm,height=4cm]{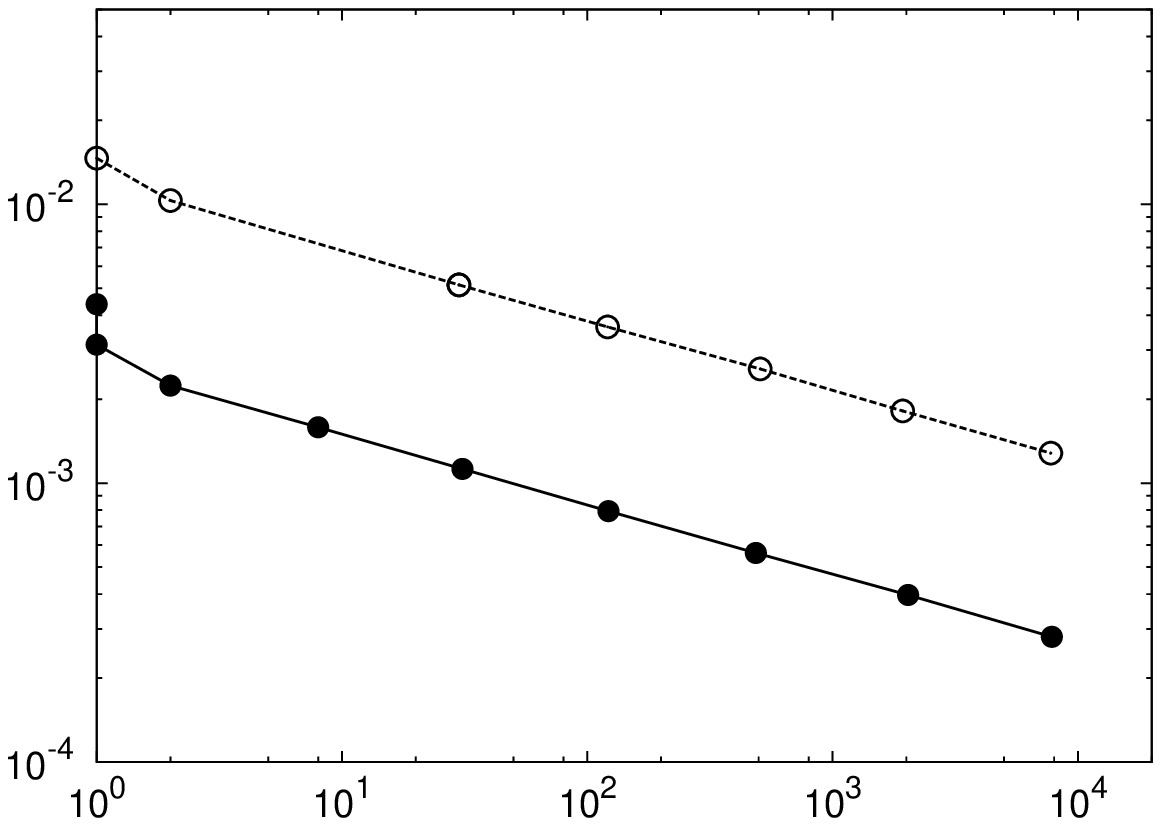} \\[3ex]
$u_1$ & $u_2$\\[1ex]
\includegraphics[width=7cm,height=4cm]{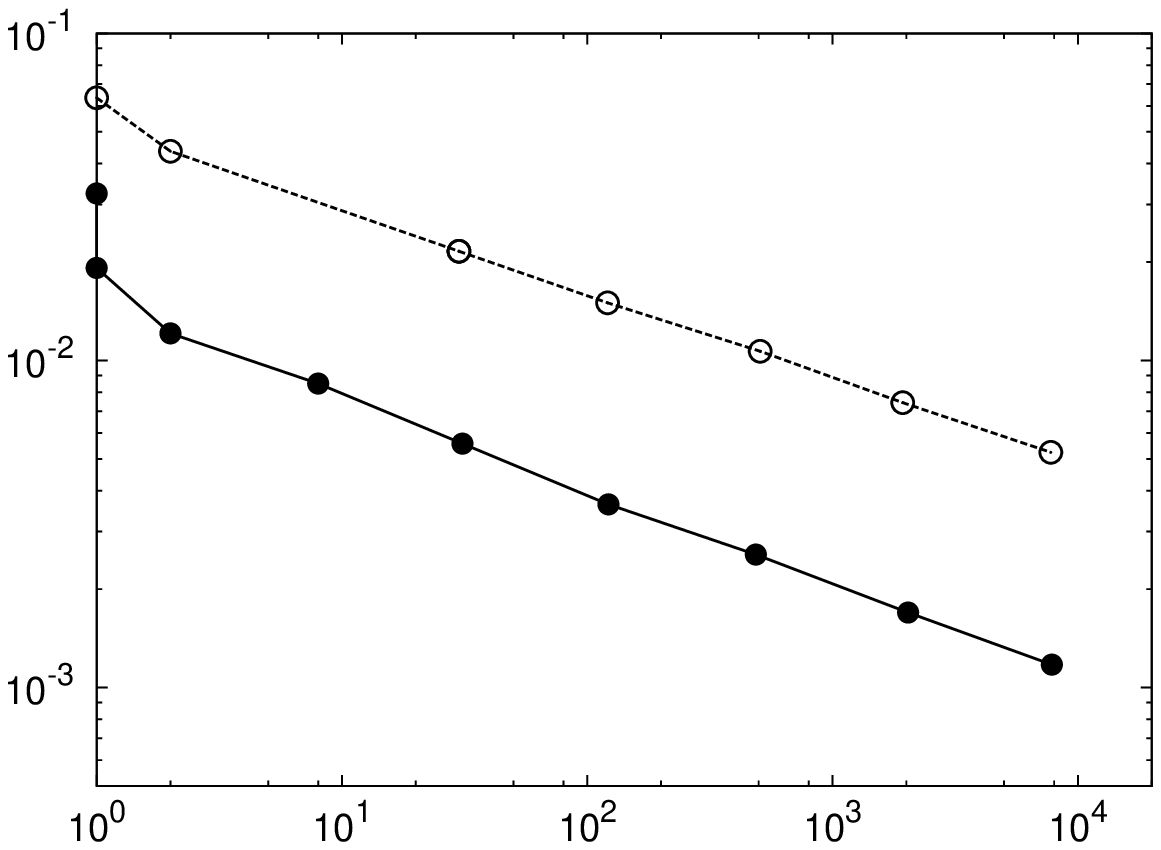} &
\includegraphics[width=7cm,height=4cm]{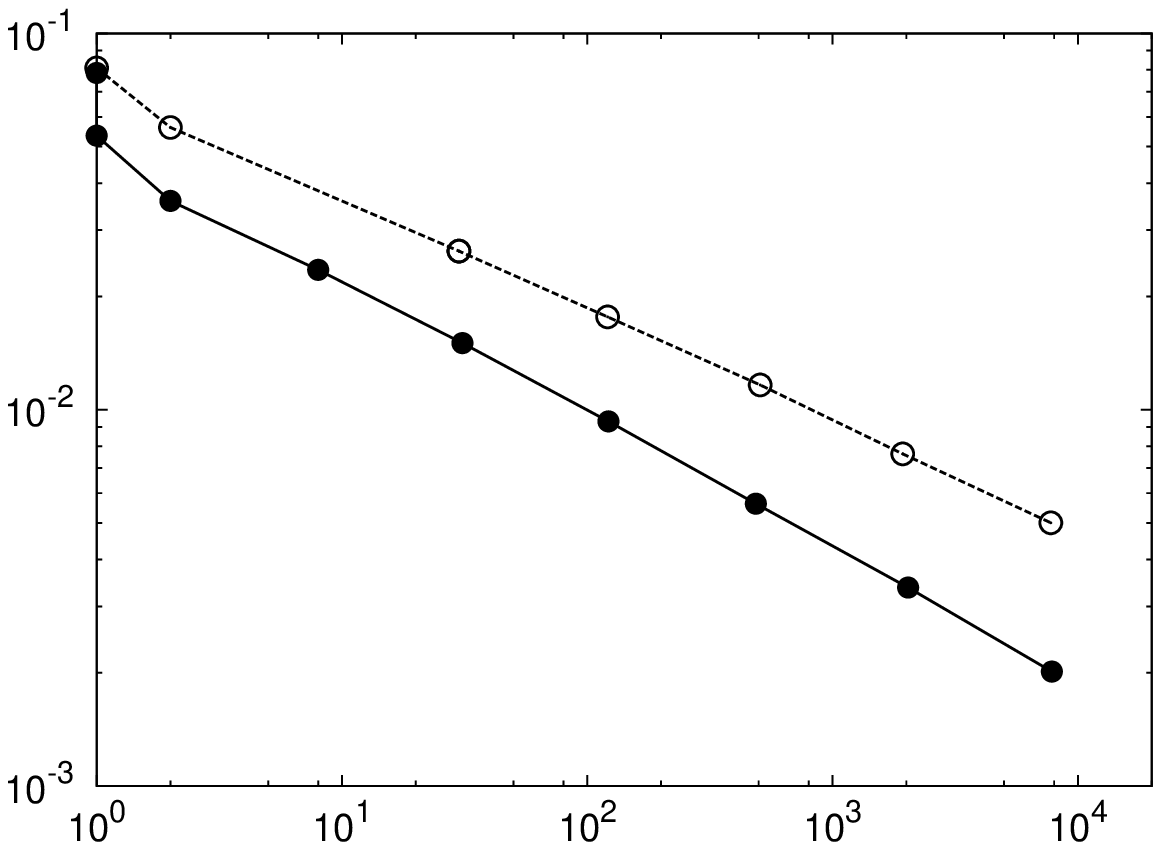} \\[3ex]
$\rho_1$ & $\rho_2$ \\[1ex]
\includegraphics[width=7cm,height=4cm]{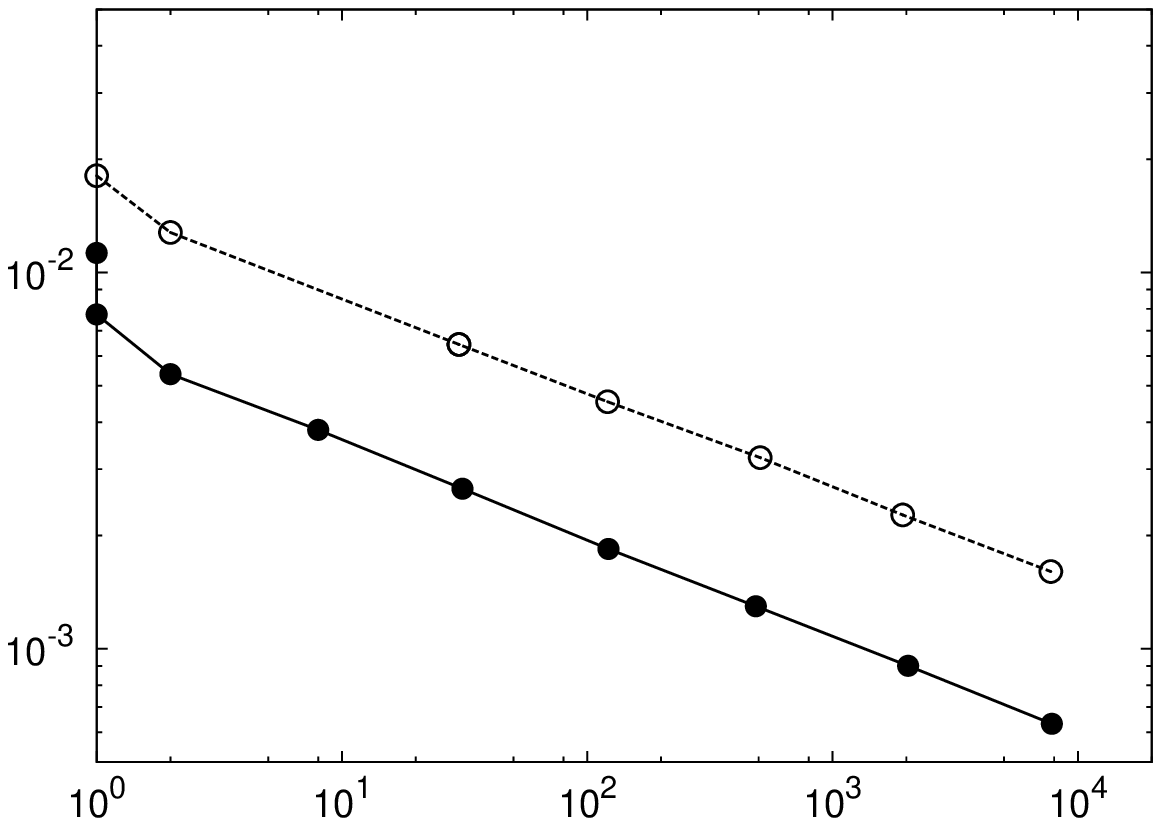} &
\includegraphics[width=7cm,height=4cm]{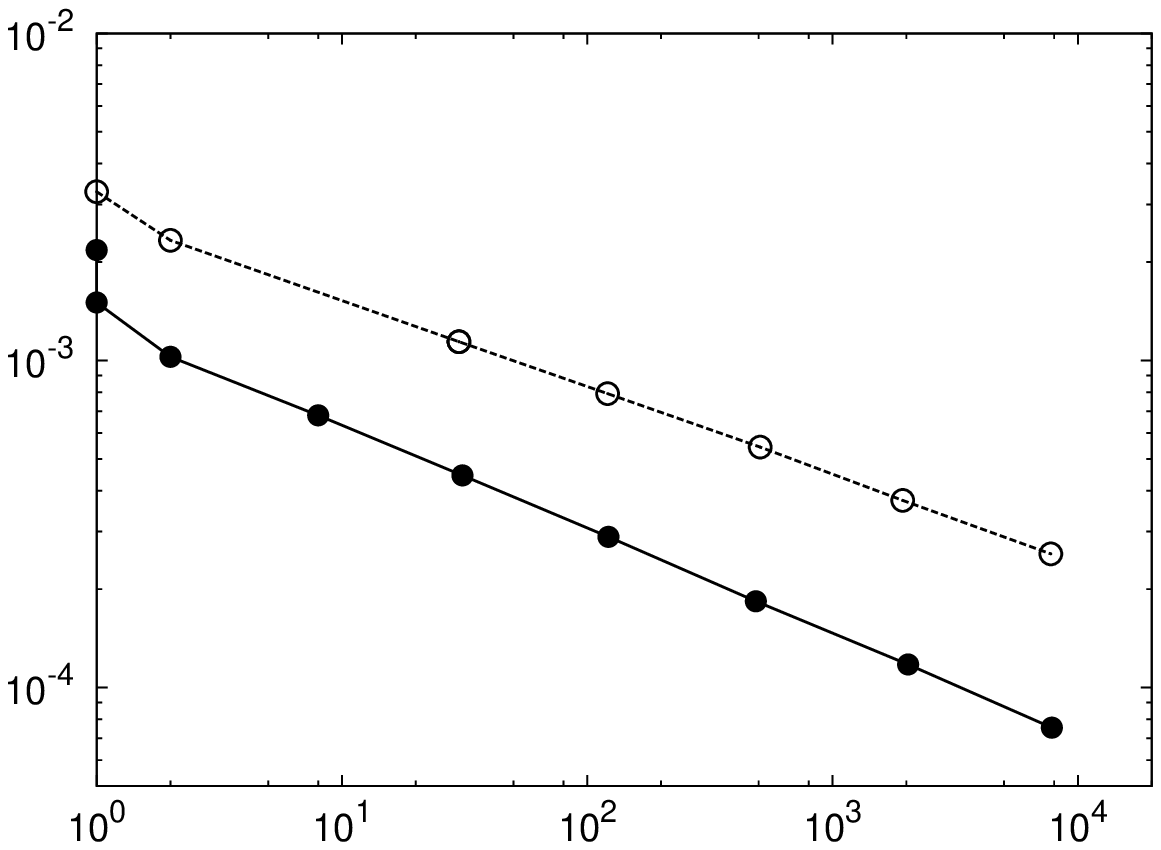}\\[3ex]
$p_1$  & $p_2$\\[1ex]
\includegraphics[width=7cm,height=4cm]{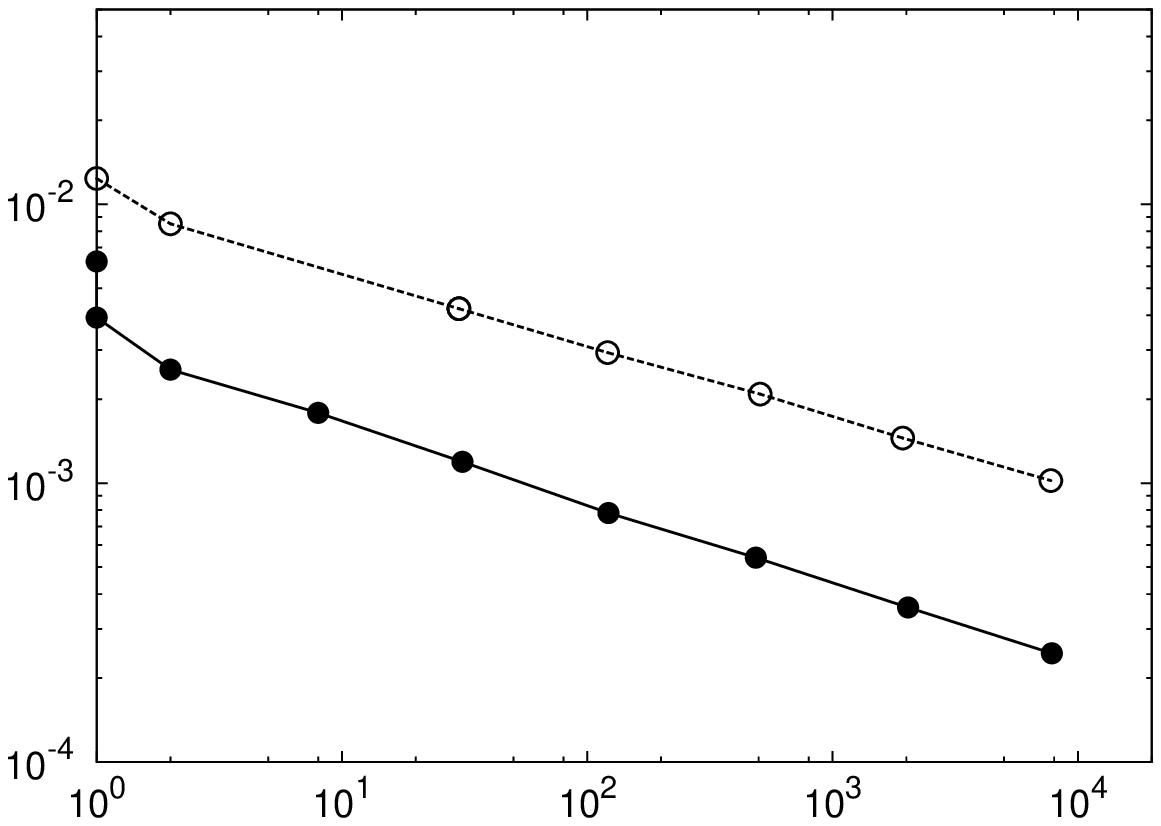} &
\includegraphics[width=7cm,height=4cm]{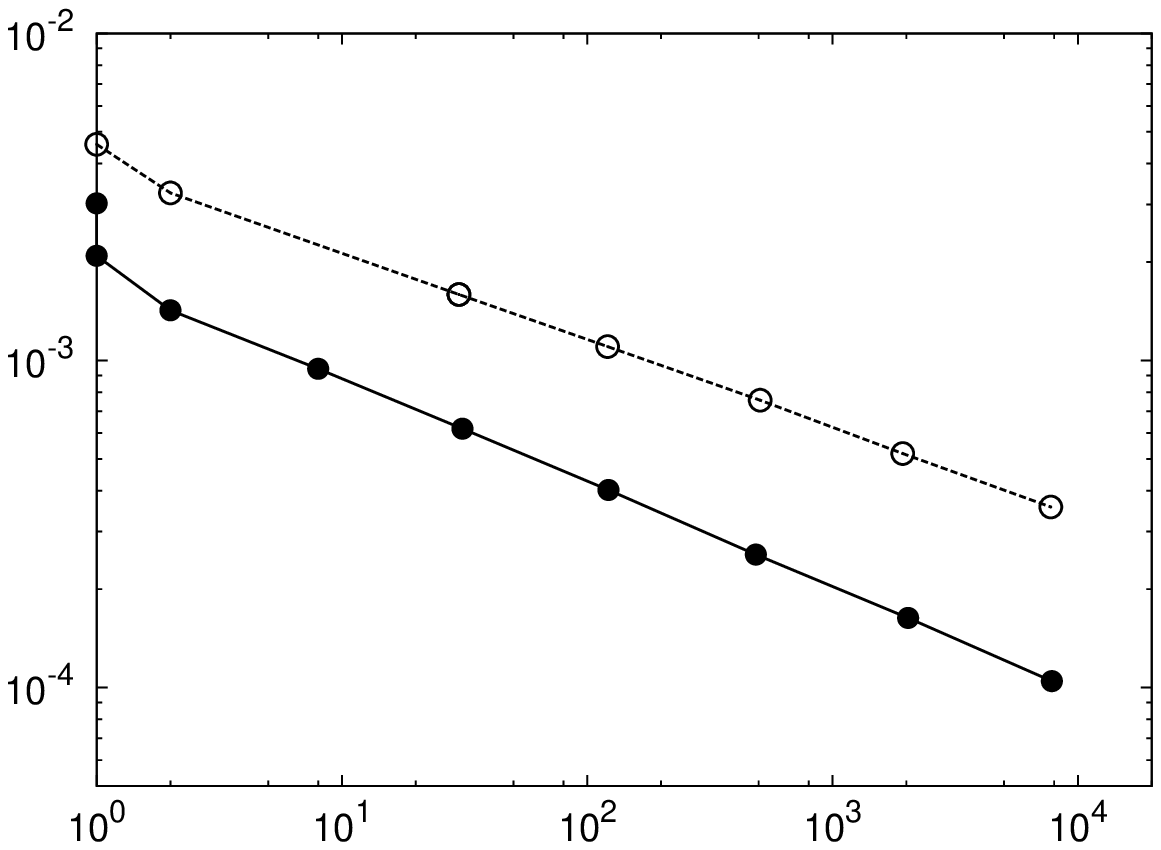}
\end{tabular}
\protect \parbox[t]{13cm}{\caption{Test-case 1: $L^1$-Error with respect to $\Delta x$ for the relaxation scheme and $L^1$-Error with respect to computational cost (in seconds) for the relaxation scheme (straight line) and Rusanov's scheme (dashed line).\label{Figcase1bis}}}
\end{center}
\end{figure}

%%%%%%%%%%%%%%%%%%%%%%%%%% TC2 %%%%%%%%%%%%%%%%%%%%%%%%%%%%%%%%%%

\subsection{Results for Test-case 2}

\begin{table}[ht!]
\centering
\begin{tabular}{|hhhhhh|}
\hline
		& Region $L$ 	& Region $-$		& Region $+$		& Region $R*$		& Region $R$ \\
\hline
$\alpha_1$ 	&$0.3$		&$0.3$			&$0.8$			&$0.8$			&$0.8$	\\
$\rho_1$	&$1.0$		&$0.4684$		&$0.50297$		&$5.9991$		&$1.0$	  \\
$u_1$		&$-19.59741$	&$6.7332$		&$-1.75405$		&$-1.75405$		&$-19.59741$	\\
$p_1$		&$1000.0$	&$345.8279$		&$382.08567$		&$382.08567$		&$0.01$		\\
$\rho_2$	&$1.0$		&$0.7687$		&$1.6087$		&$1.6087$		&$1.0$	\\
$u_2$		&$-19.59716$	&$-6.3085$		&$-6.3085$		&$-6.3085$		&$-19.59741$	\\
$p_2$		&$1000.0$	&$399.5878$		&$466.72591$		&$466.72591$		&$0.01$	\\
\hline
\end{tabular}
\protect \parbox[t]{13cm}{\caption{Test-case 2: Left, right and intermediate states of the exact solution.\label{Table_TC2}}}
\end{table}

The second test-case was taken from \cite{TT}. Phase 1 follows an ideal gas \eos\, while phase 2 follows a stiffened gas \eos\, (see Table \ref{Table1}). From left to right, the solution for phase 1 consists of a left-traveling rarefaction wave, the phase fraction discontinuity, a material contact discontinuity $u_1$, and  a right-traveling shock. For phase 2 the wave pattern is composed of a left-traveling rarefaction wave, the phase fraction discontinuity, and a right-traveling shock.

\medskip
As the jump of initial pressures is very large, \textbf{strong shocks} are generated in each phase. The distance between the right shock and contact waves is small in phase 1, which makes it difficult for all the schemes to capture the intermediate states at this weak level of refinement ($100$ cells). We observe however that the Godunov-type scheme, the HLLC scheme and the relaxation scheme remain more accurate than Rusanov's scheme. We also observe that the narrow intermediate state for $\rho_1$ between the $u_1$-contact discontinuity and the $\lbrace u_1+c_1 \rbrace$-shock is better captured with the Godunov-type and the HLLC schemes than by the relaxation scheme. For phase 2, Rusanov's scheme fails to correctly capture the speed of the right-going shock due to the large difference between the pressures before and after the shock. On the contrary, the other schemes capture the shock with the correct speed.

\medskip 
A convergence study has also been performed for this test-case. The observed convergence rate is slightly larger than $\Delta x^{1/2}$, and the error \textit{v.s.} CPU plots show a smaller computational cost for the relaxation scheme than for Rusanov's scheme. However, on this test-case, the observed gain in the computational time is less than in the first test-case and is not the same for all the variables.

\begin{figure}[ht!]
\scriptsize
\begin{center}
\begin{tabular}{cc}
Wave structure & $\alpha_1$ \\[1ex]
\includegraphics[width=7cm,height=4cm]{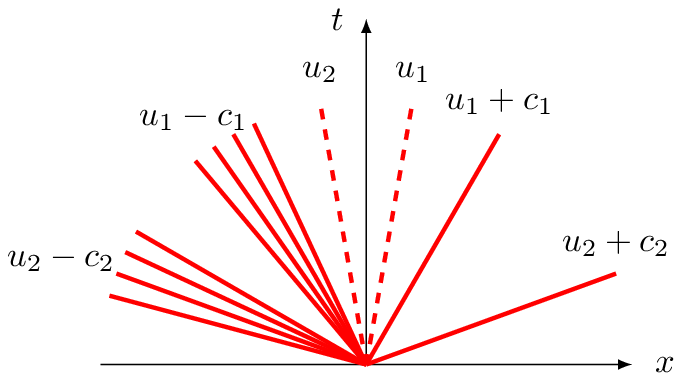}&
\includegraphics[width=7cm,height=4cm]{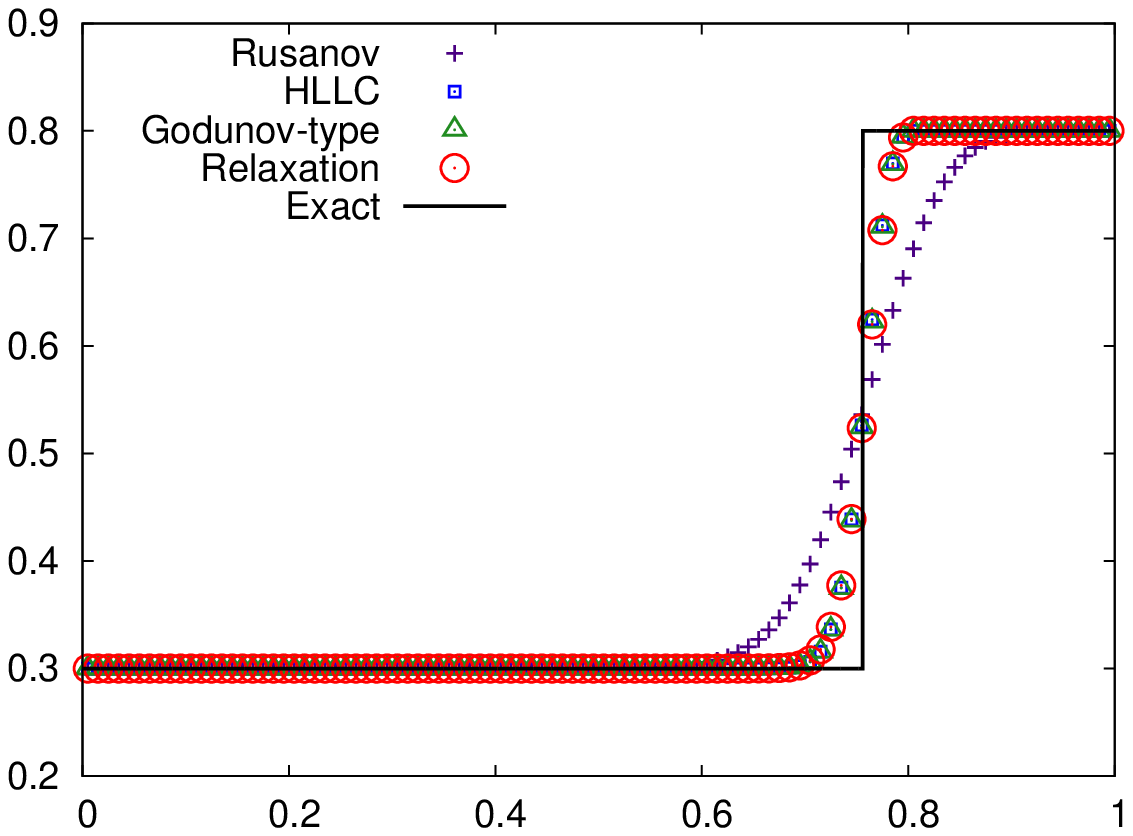} \\[3ex]
$u_1$ & $u_2$\\[1ex]
\includegraphics[width=7cm,height=4cm]{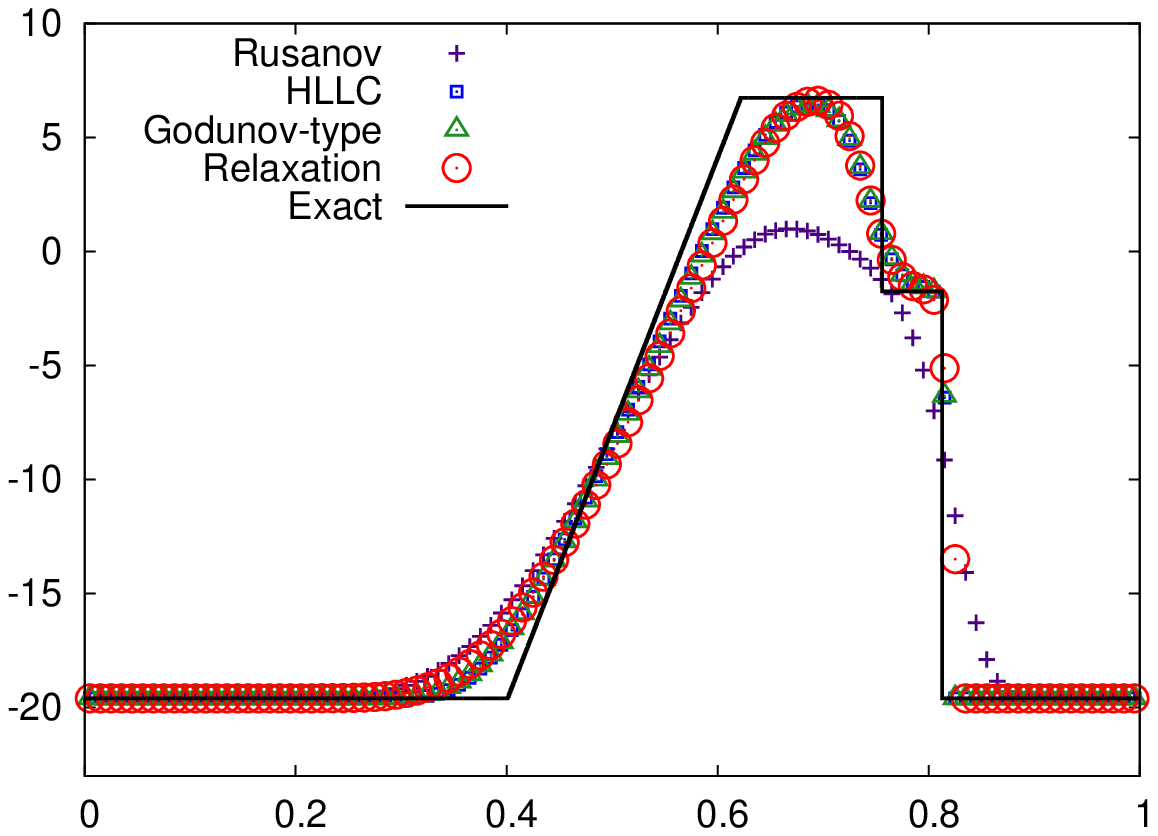} &
\includegraphics[width=7cm,height=4cm]{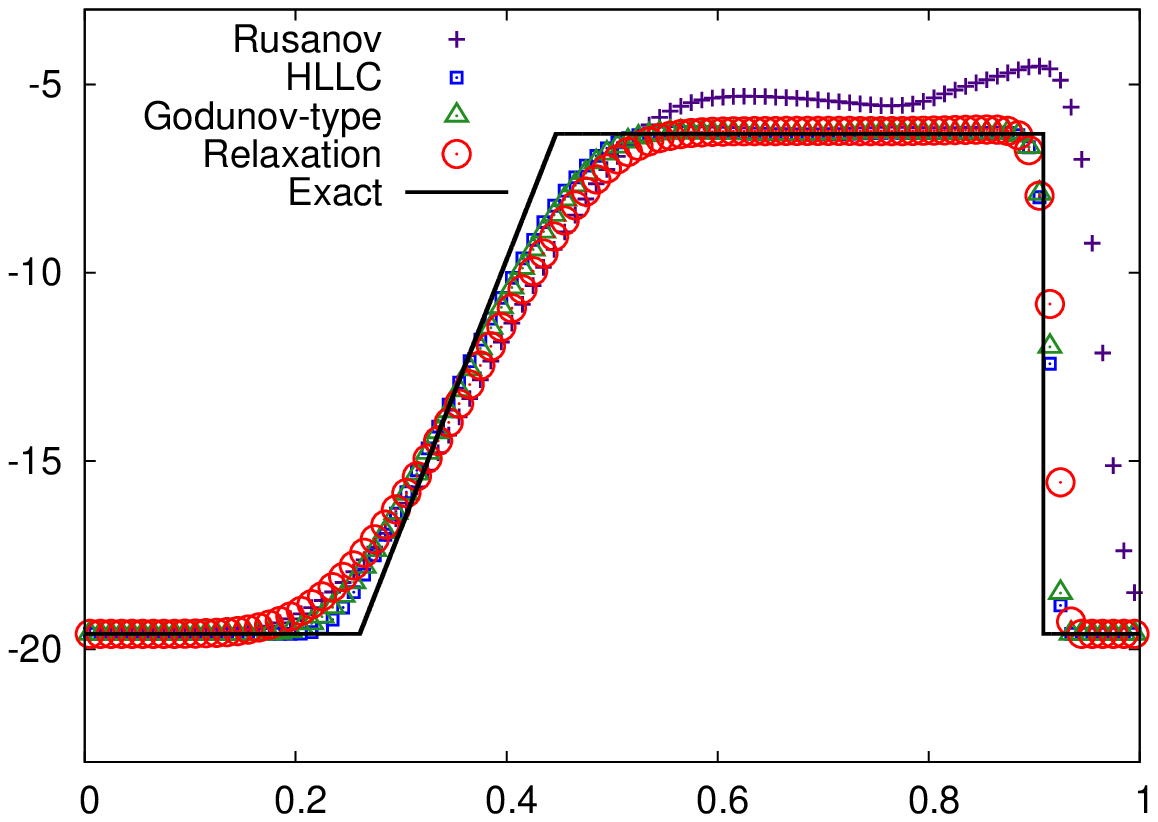} \\[3ex]
$\rho_1$ & $\rho_2$ \\[1ex]
\includegraphics[width=7cm,height=4cm]{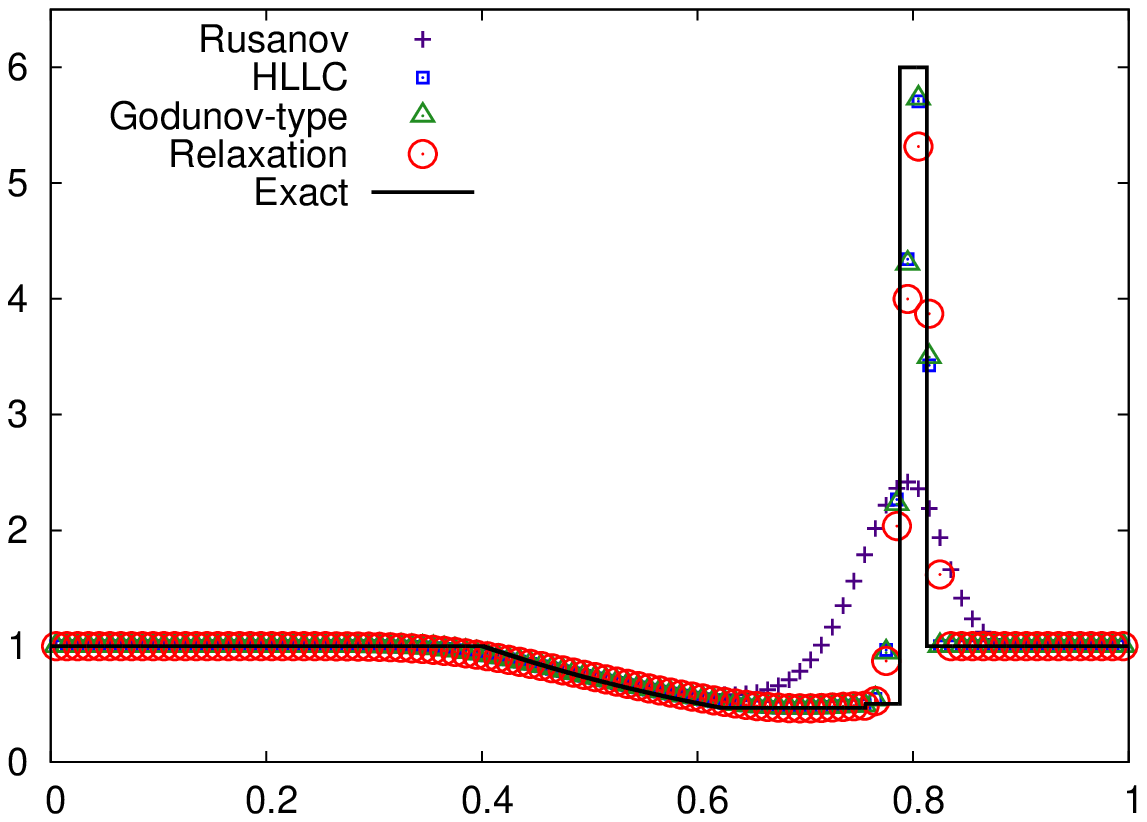} &
\includegraphics[width=7cm,height=4cm]{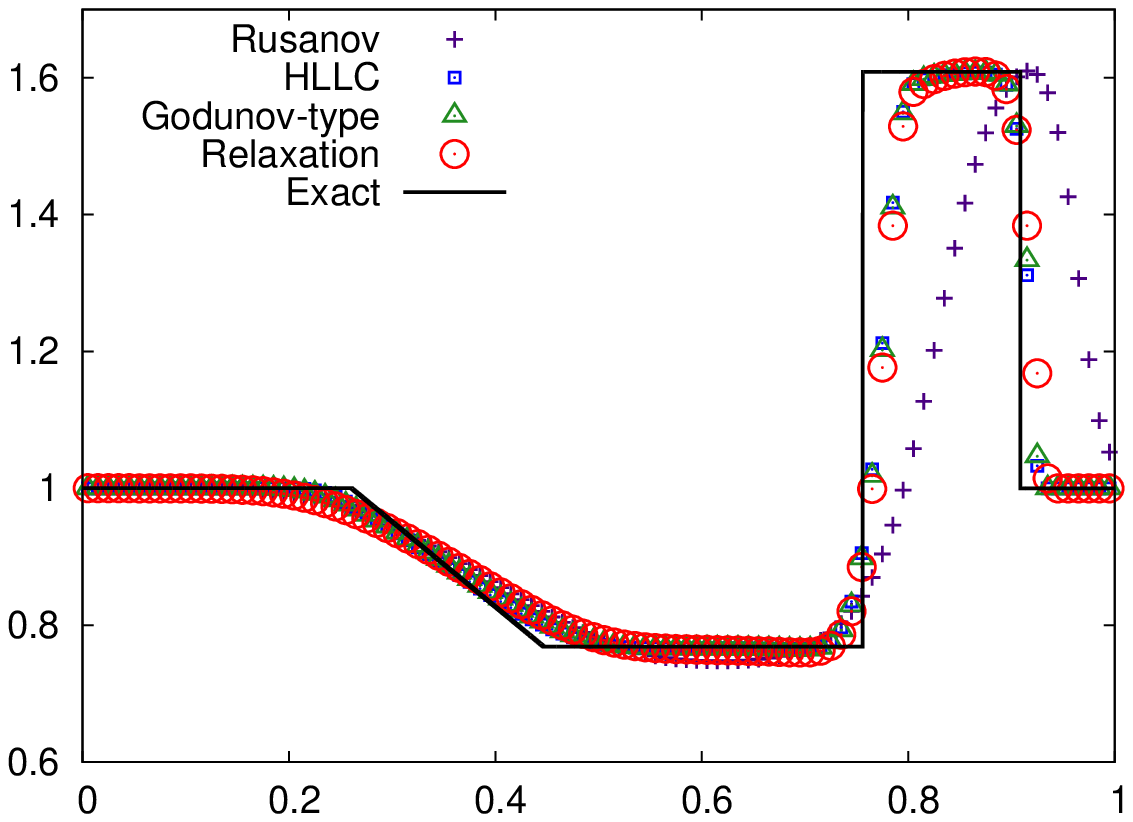}\\[3ex]
$p_1$  & $p_2$\\[1ex]
\includegraphics[width=7cm,height=4cm]{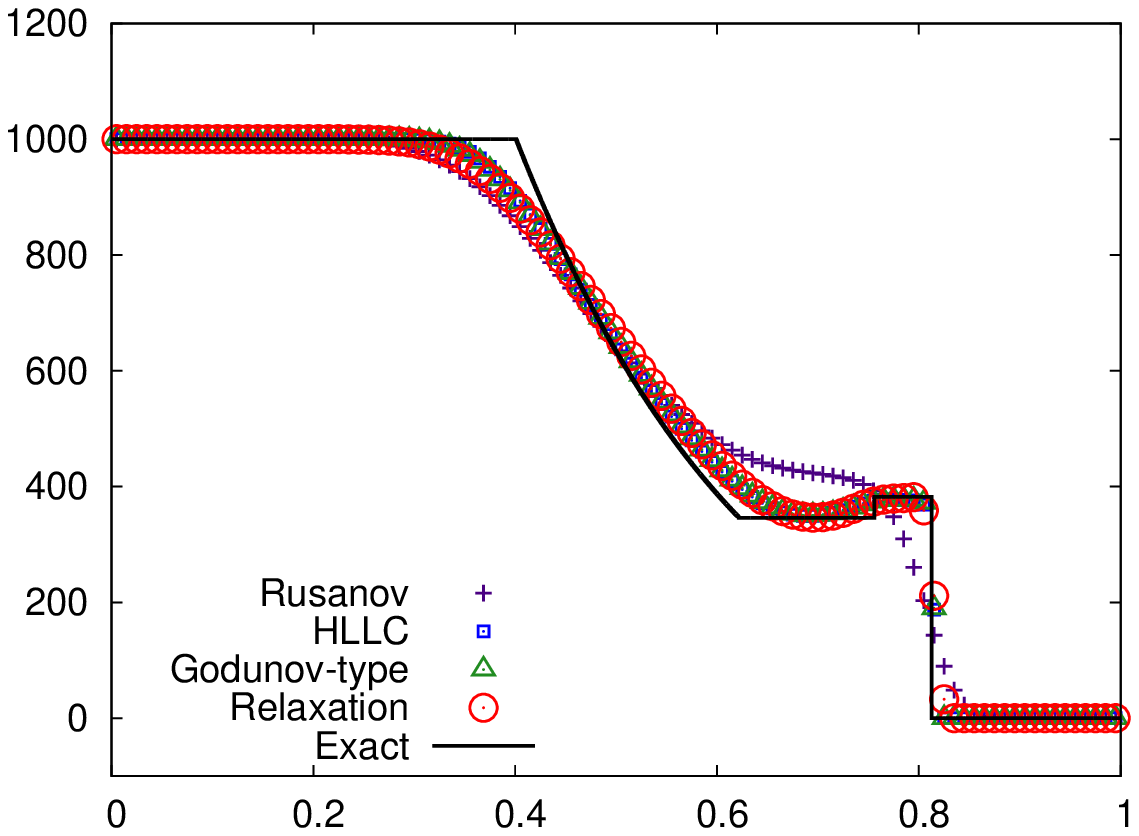} &
\includegraphics[width=7cm,height=4cm]{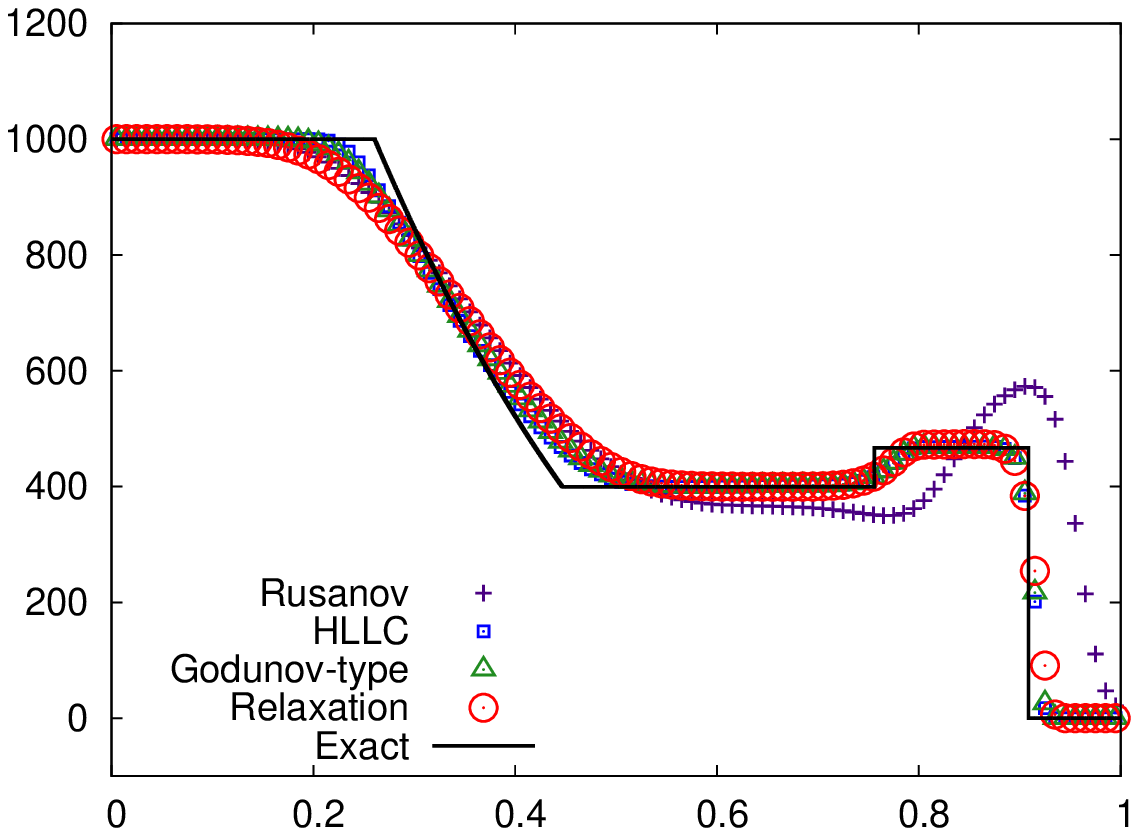}
\end{tabular}
\protect \parbox[t]{13cm}{\caption{Test-case 2: Structure of the solution and space variations of the physical variables at the final time $T_{\rm max}=0.007$. Mesh size: $100$ cells.\label{Figcase2}}}
\end{center}
\normalsize
\end{figure}

\begin{figure}[ht!]
\begin{center}
 \begin{tabular}{cc}
 & $\alpha_1$\\[1ex]
\includegraphics[width=7cm,height=4cm]{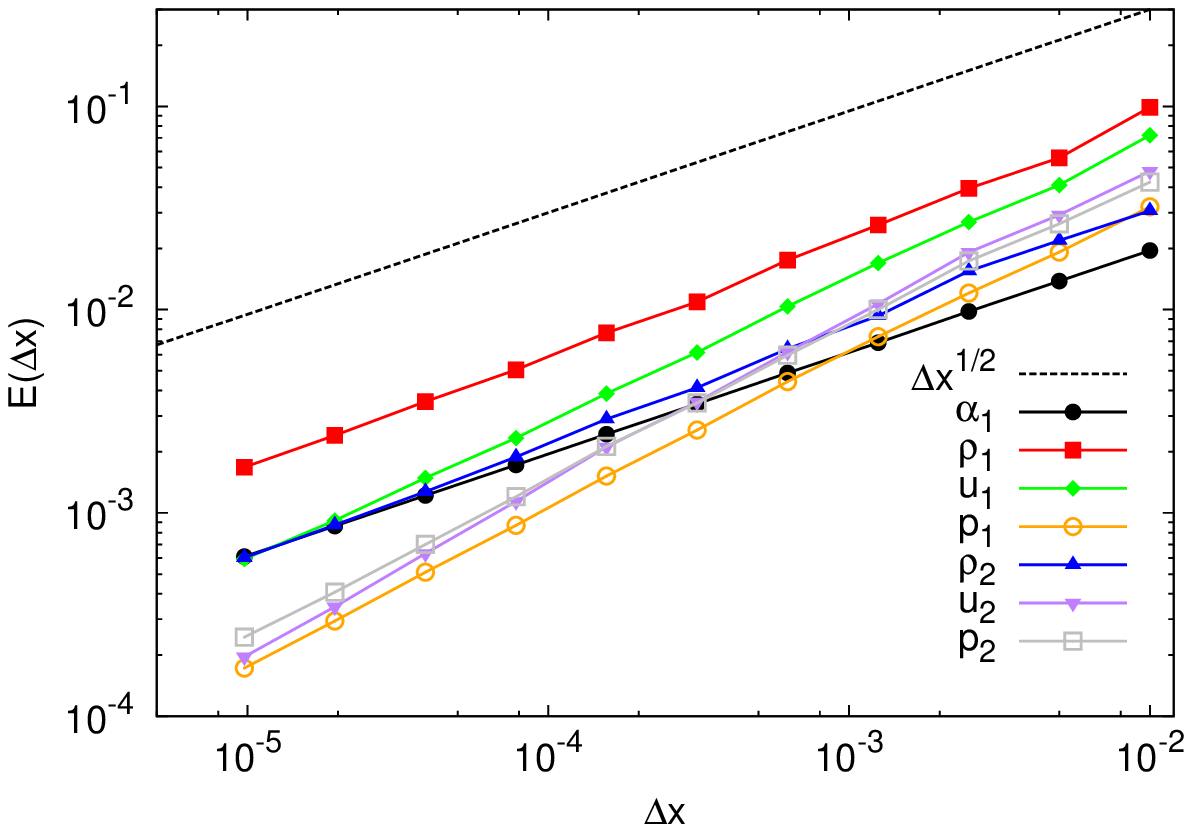}&
\includegraphics[width=7cm,height=4cm]{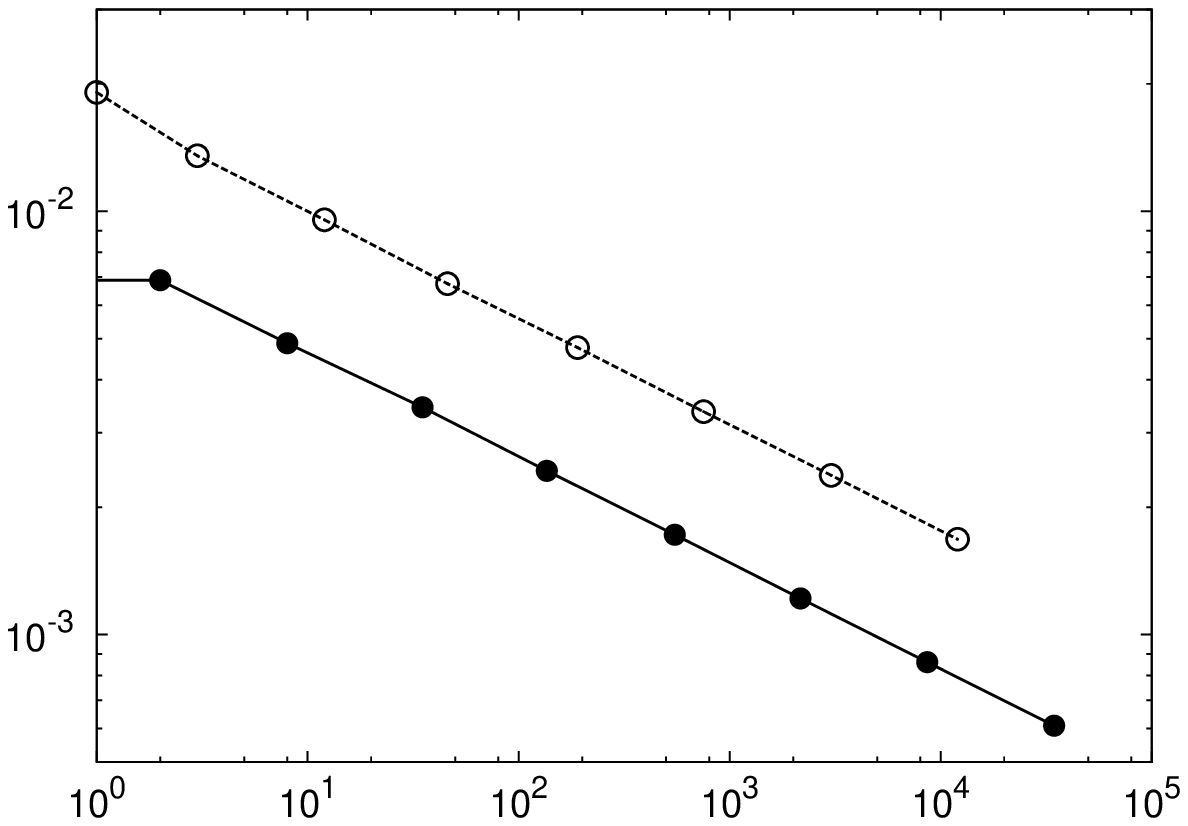} \\[3ex]
$u_1$ & $u_2$\\[1ex]
\includegraphics[width=7cm,height=4cm]{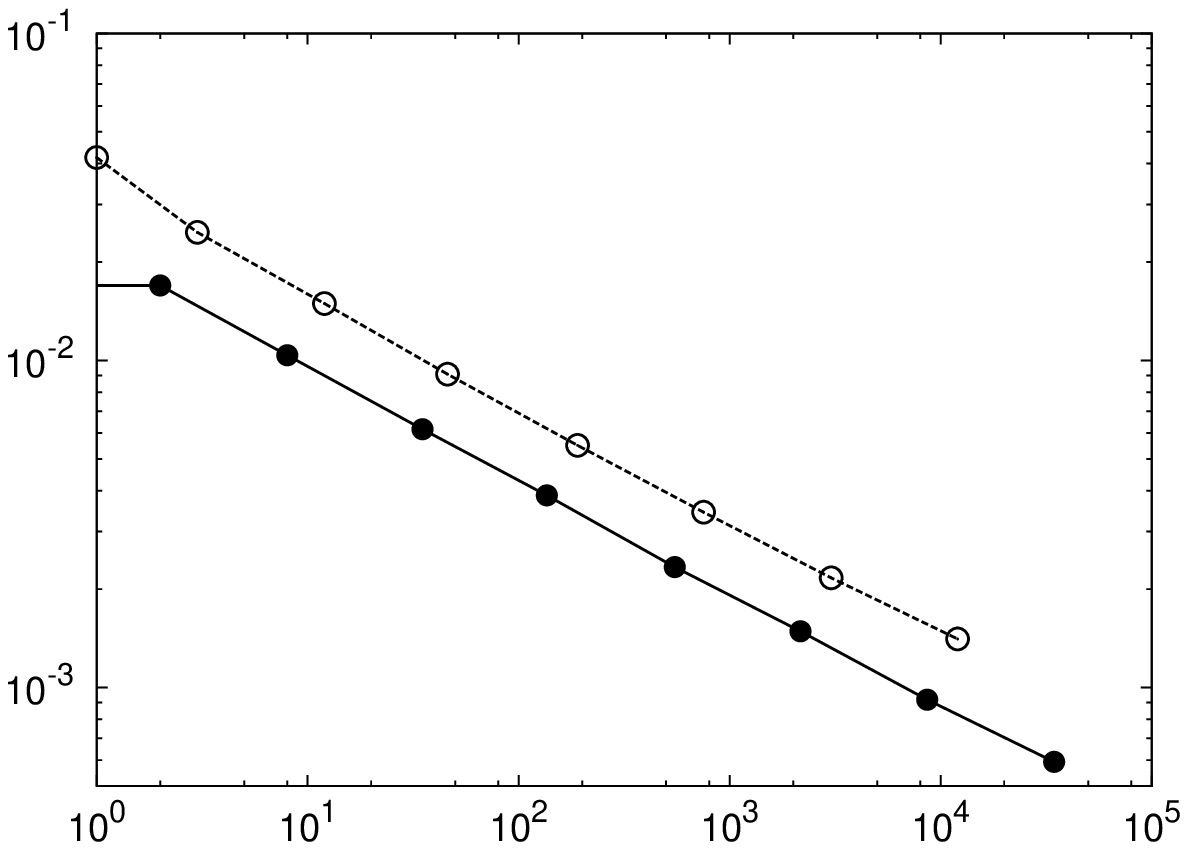} &
\includegraphics[width=7cm,height=4cm]{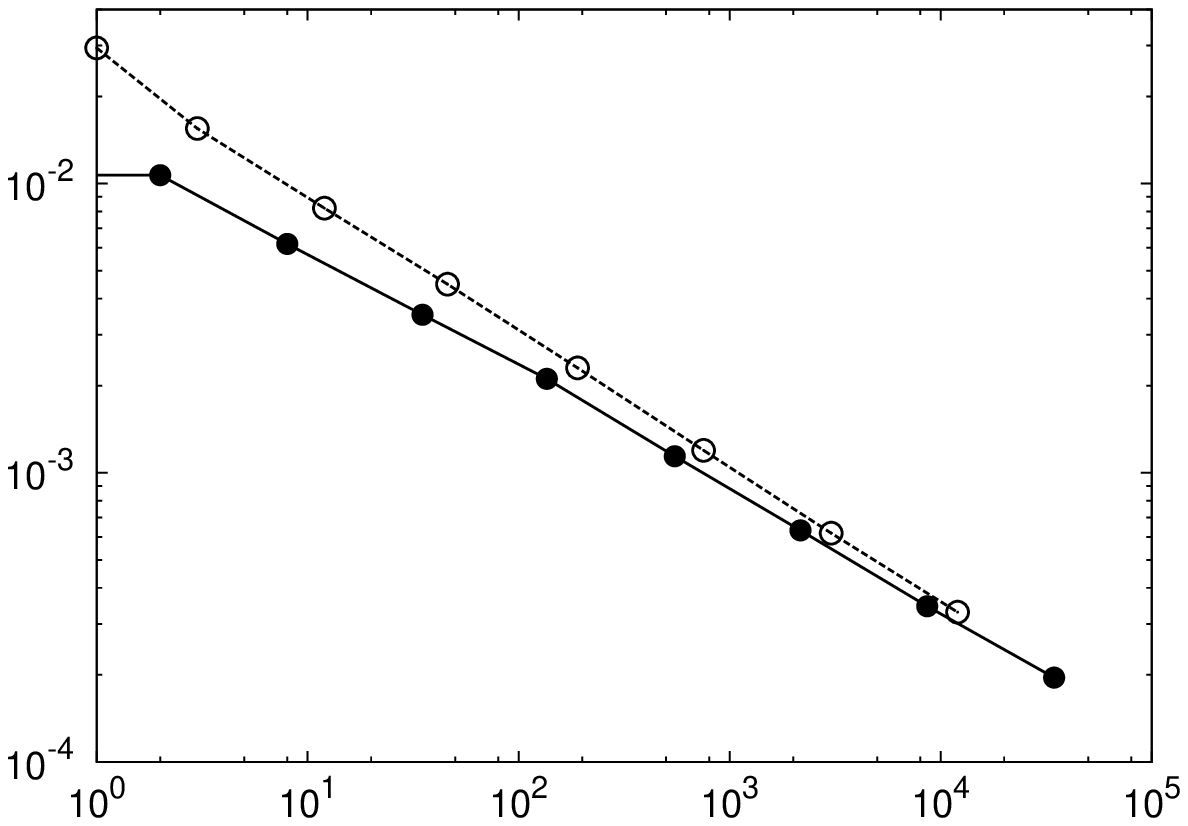} \\[3ex]
$\rho_1$ & $\rho_2$ \\[1ex]
\includegraphics[width=7cm,height=4cm]{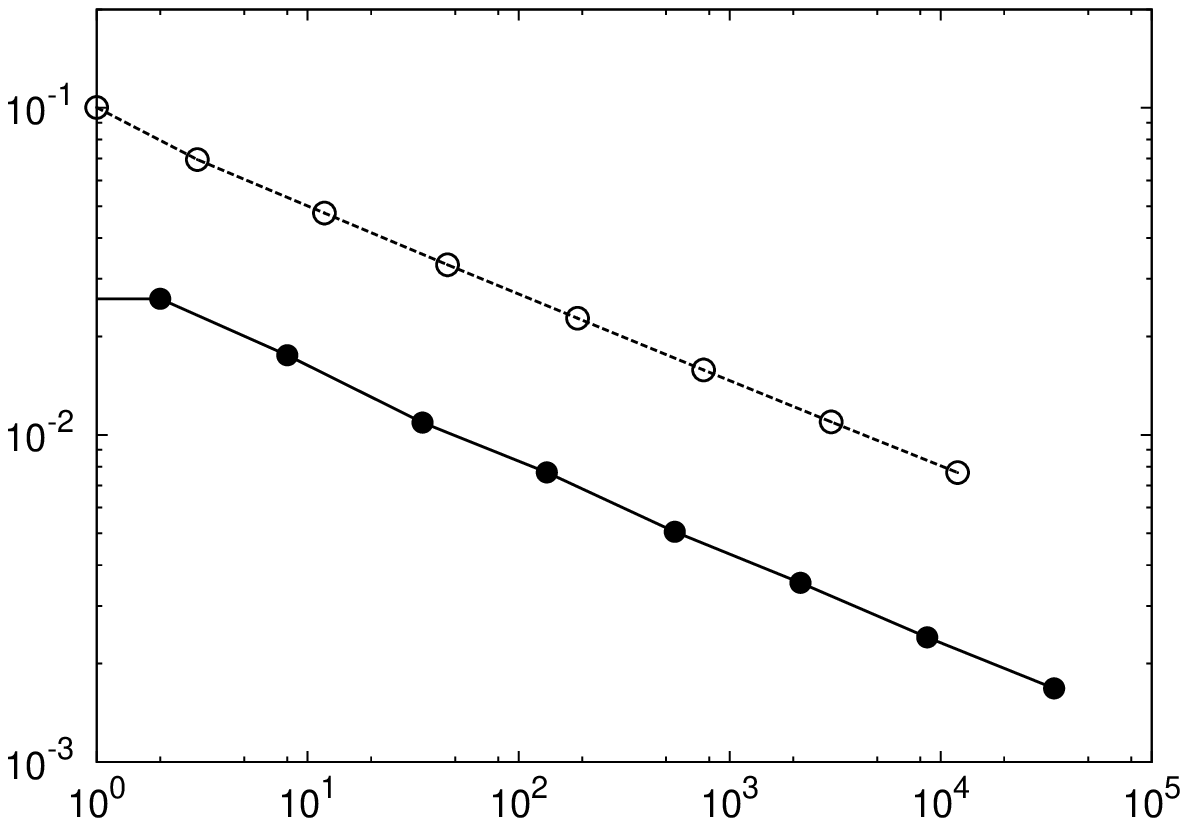} &
\includegraphics[width=7cm,height=4cm]{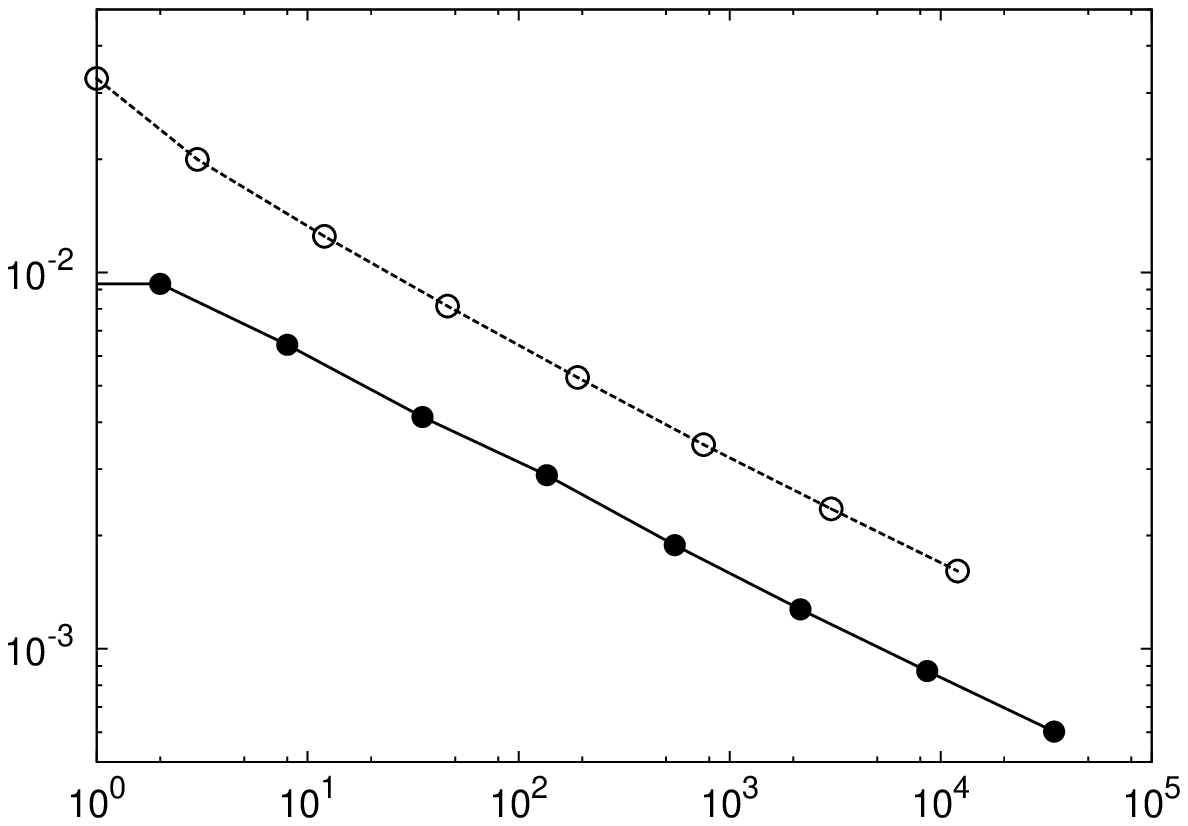}\\[3ex]
$p_1$  & $p_2$\\[1ex]
\includegraphics[width=7cm,height=4cm]{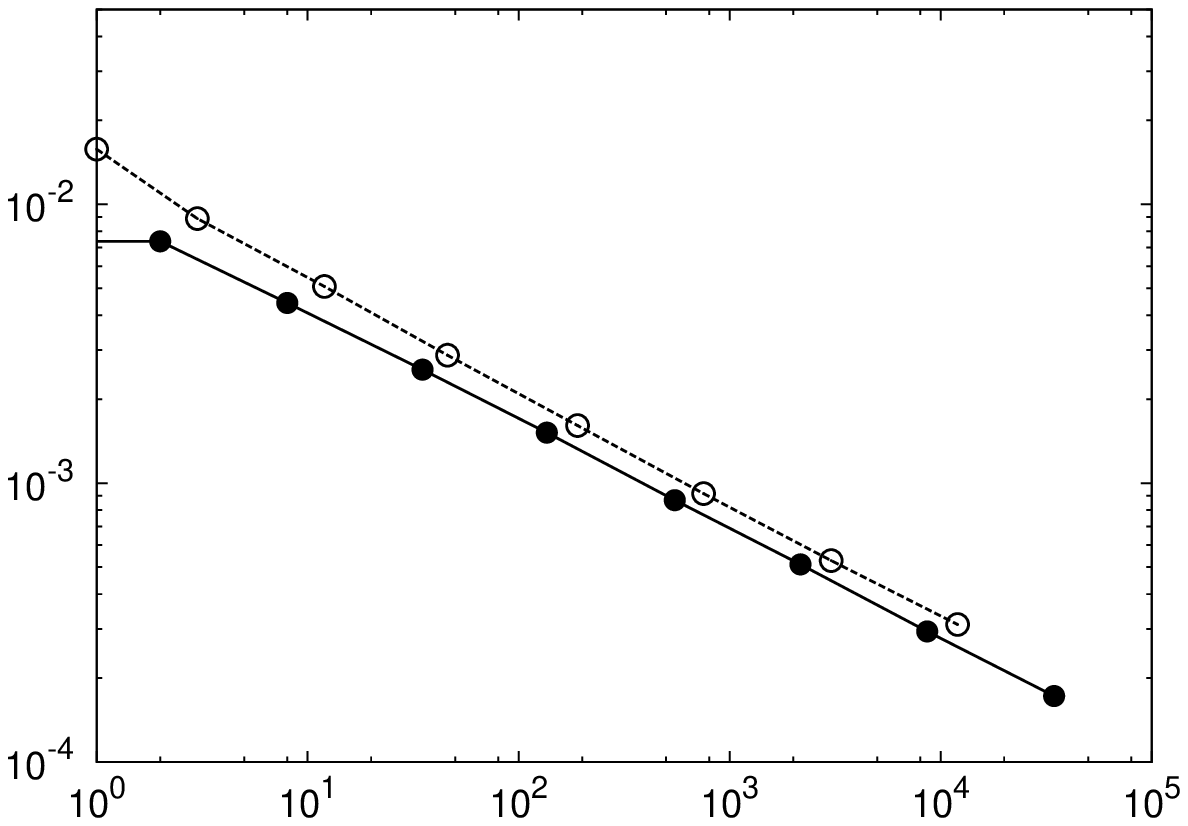} &
\includegraphics[width=7cm,height=4cm]{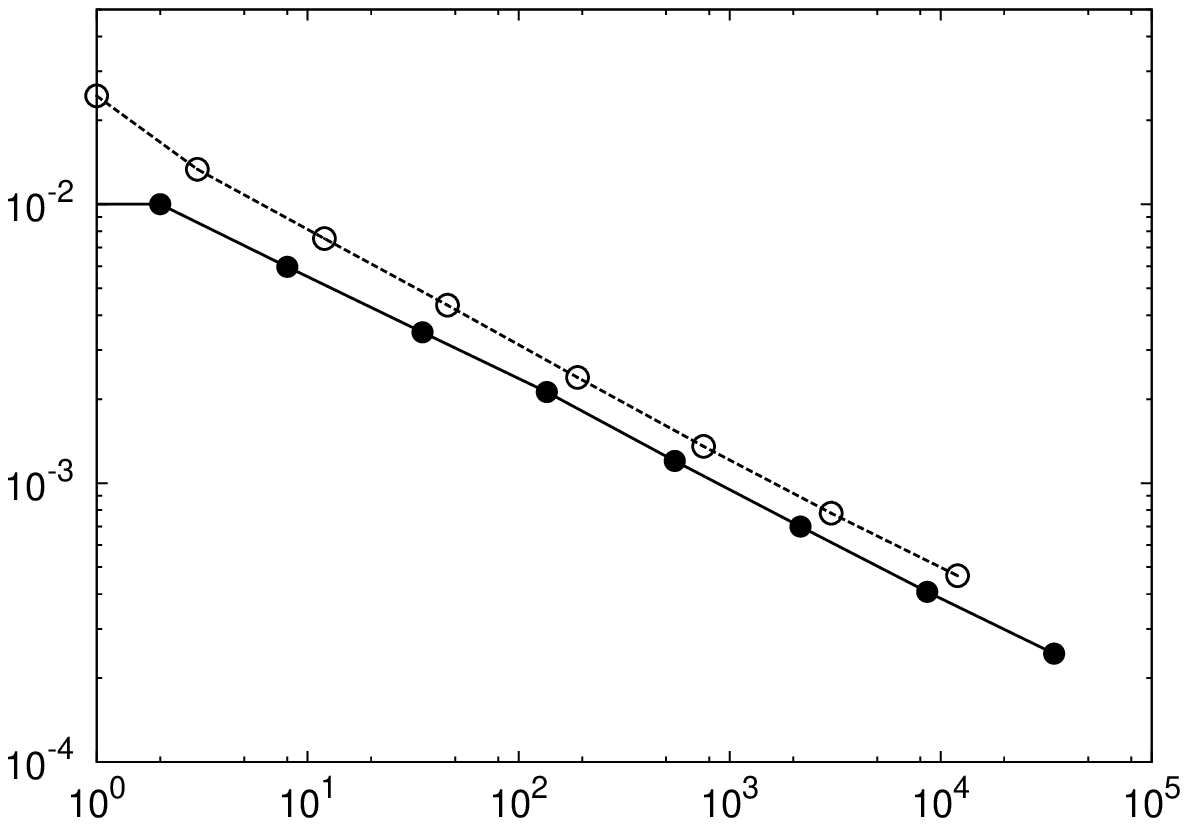}
\end{tabular}
\protect \parbox[t]{13cm}{\caption{Test-case 2: $L^1$-Error with respect to $\Delta x$ for the relaxation scheme and $L^1$-Error with respect to computational cost (in seconds) for the relaxation scheme (straight line) and Rusanov's scheme (dashed line).\label{Figcase2bis}}}

\end{center}
\end{figure}

%%%%%%%%%%%%%%%%%%%%%%%%%% TC3 %%%%%%%%%%%%%%%%%%%%%%%%%%%%%%%%%%

\subsection{Results for Test-case 3}

\begin{table}[ht!]
\centering
\begin{tabular}{|ggggg|}
\hline
		& Region $L$ 	& Region $-$	& Region $+$	& Region $R$		 \\
\hline
$\alpha_1$ 	&$0.2$		&$0.2$		&$0.5$		&$0.5$			\\
$\rho_1$	&$0.99988$	&$0.0219$	&$0.0219$	&$0.99988$		  \\
$u_1$		&$-1.99931$	&$0.0$		&$0.0$		&$1.99931$			\\
$p_1$		&$0.4$		&$0.0019$	&$0.0019$	&$0.4$			\\
$\rho_2$	&$0.99988$	&$0.0219$	&$0.0219$	&$0.99988$				\\
$u_2$		&$-1.99931$	&$0.0$		&$0.0$		&$1.99931$			\\
$p_2$		&$0.4$		&$0.0019$	&$0.0019$	&$0.4$			\\
\hline
\end{tabular}
 \protect \parbox[t]{13cm}{\caption{Test-case 3: Left, right and intermediate states of the exact solution.\label{Table_TC3}}}
\end{table}

This test was also taken from \cite{TT}. Both phases consist of two symmetric rarefaction waves and a stationary $u_2$-contact discontinuity. As the region between the rarefaction waves is close to vacuum, this test-case is useful to assess the pressure positivity property. Note that the positivity of the pressures is expected here since both phases follow an ideal gas \eos. The results are given in Figure \ref{Figcase3}. We can see that the computed pressures are positive for all the schemes. In addition, all the schemes have similar results, except for the resolution of the phase fraction discontinuity which appears to be very diffused by Rusanov's scheme while it is \textbf{exactly captured} by the other three schemes. For the relaxation scheme, this is a property satisfied by the relaxation Riemann solver for this type of discontinuities. It is also naturally satisfied by the Godunov-type scheme and by the HLLC scheme (see \cite{TT}).

\begin{figure}[ht!]
\begin{center}
\begin{tabular}{cc}
Wave structure & $\alpha_1$ \\[1ex]
\includegraphics[width=7cm,height=4cm]{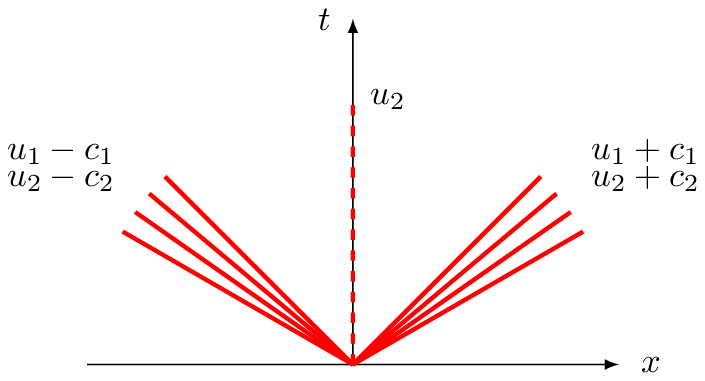}&
\includegraphics[width=7cm,height=4cm]{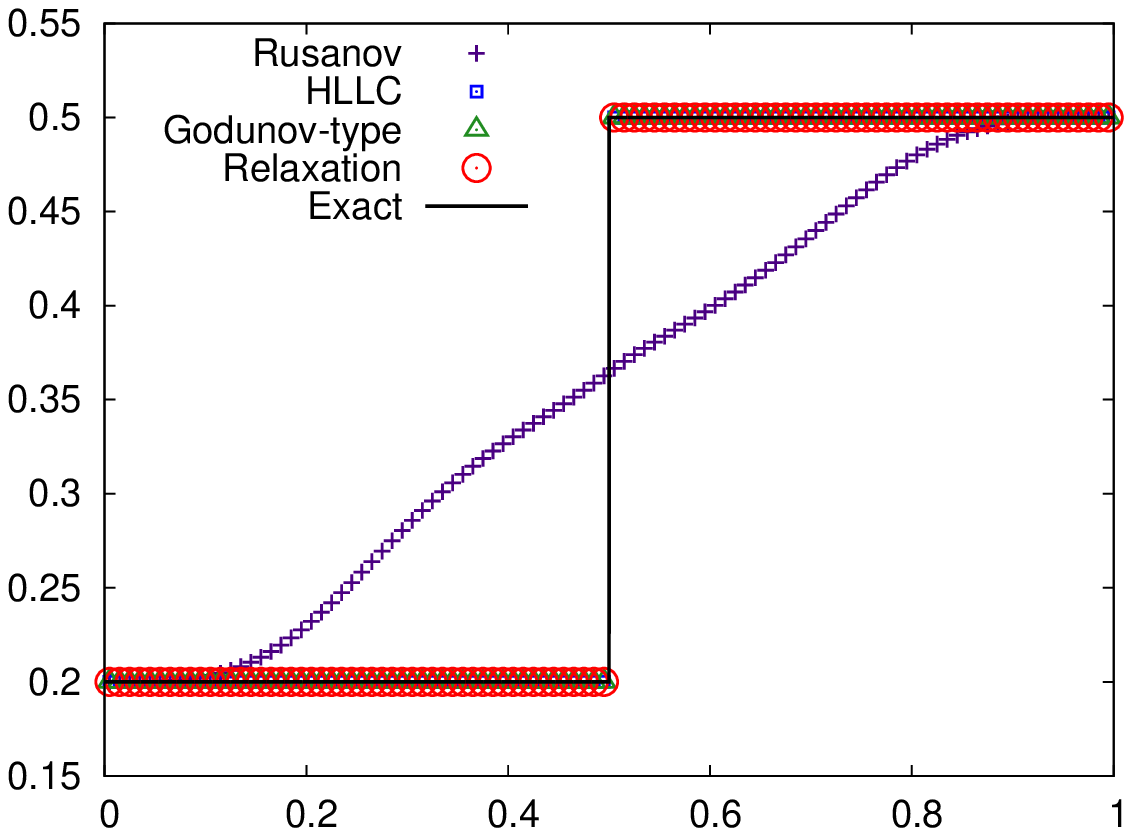} \\[3ex]
$u_1$ & $u_2$\\[1ex]
\includegraphics[width=7cm,height=4cm]{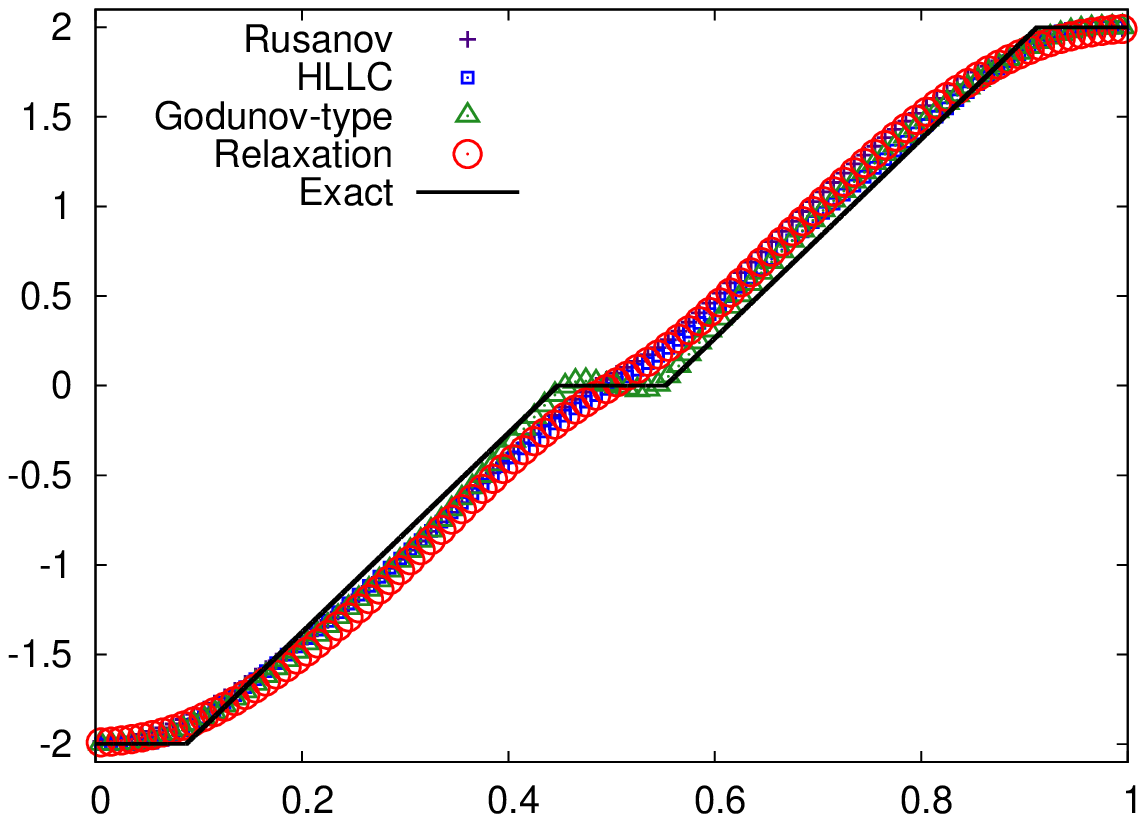} &
\includegraphics[width=7cm,height=4cm]{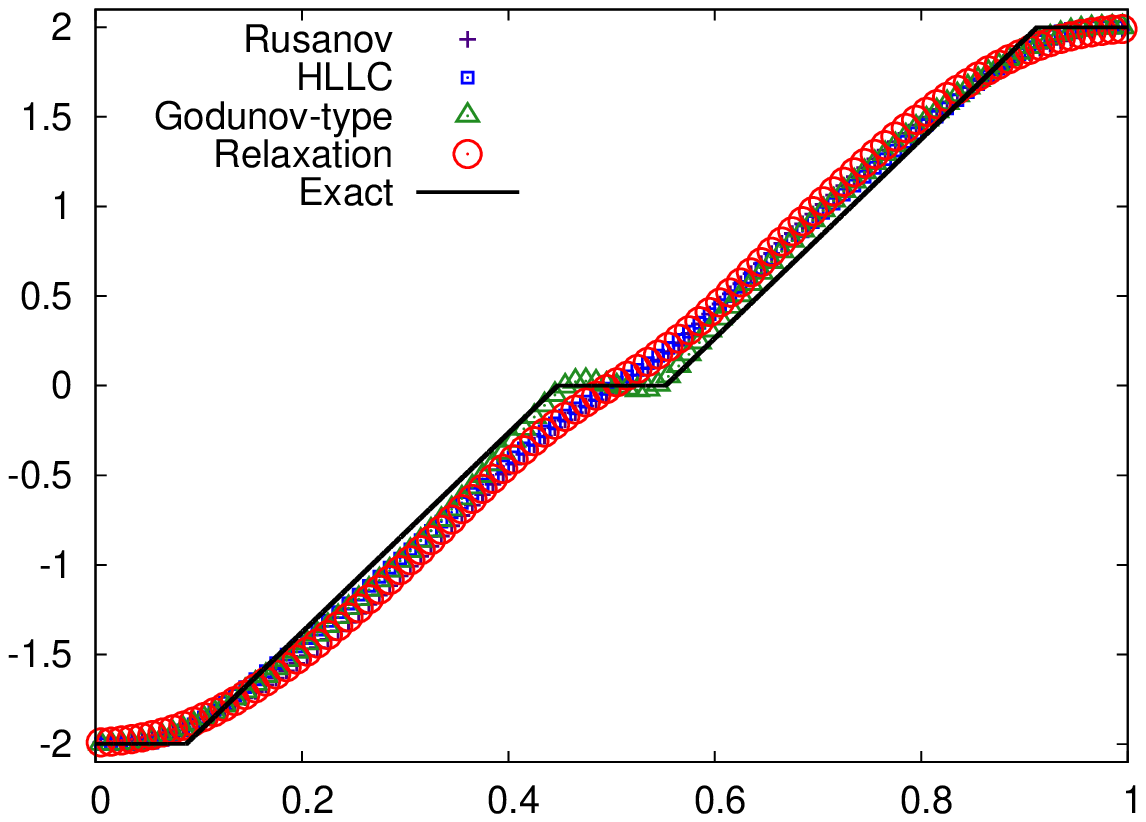} \\[3ex]
$\rho_1$ & $\rho_2$ \\[1ex]
\includegraphics[width=7cm,height=4cm]{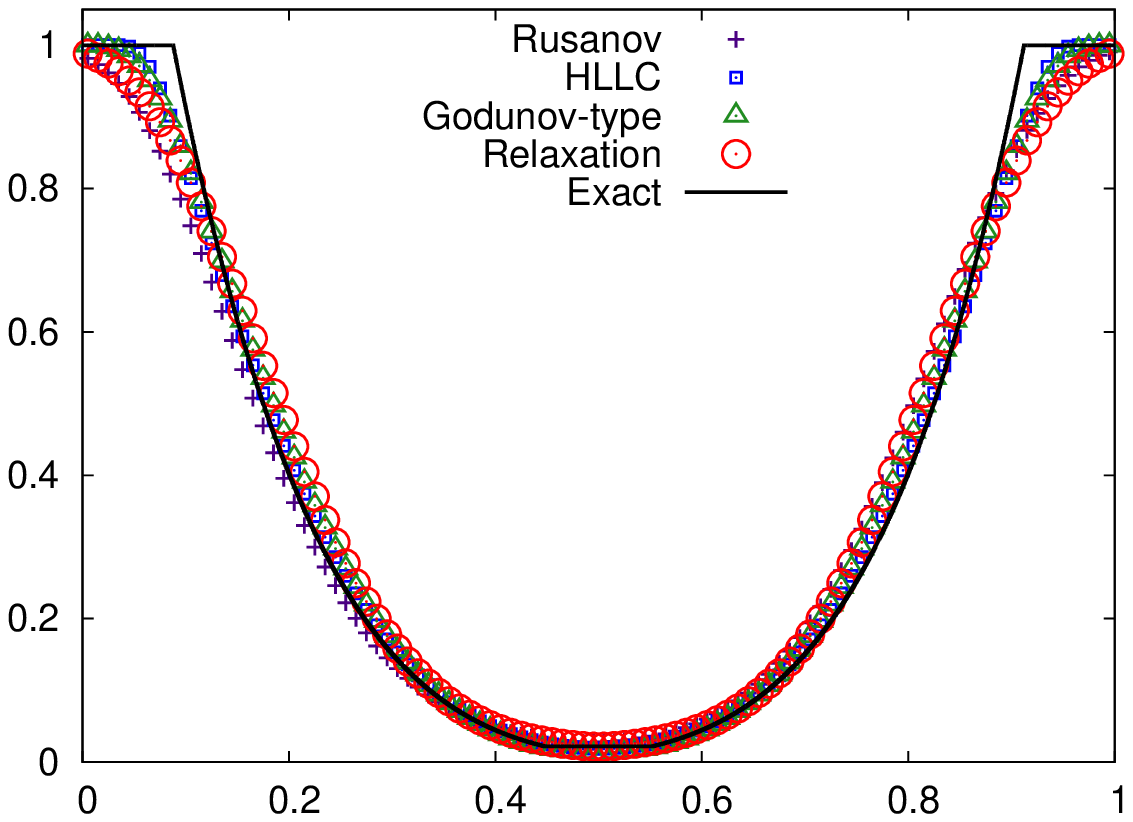} &
\includegraphics[width=7cm,height=4cm]{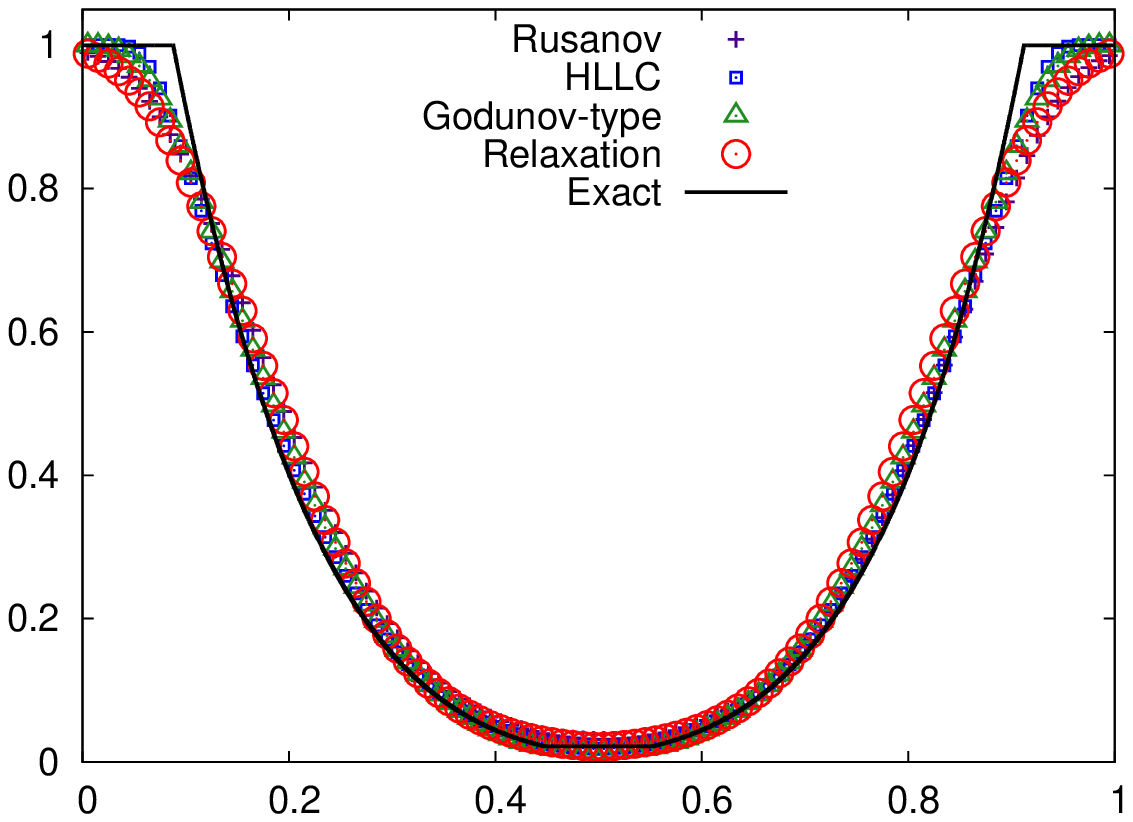}\\[3ex]
$p_1$  & $p_2$\\[1ex]
\includegraphics[width=7cm,height=4cm]{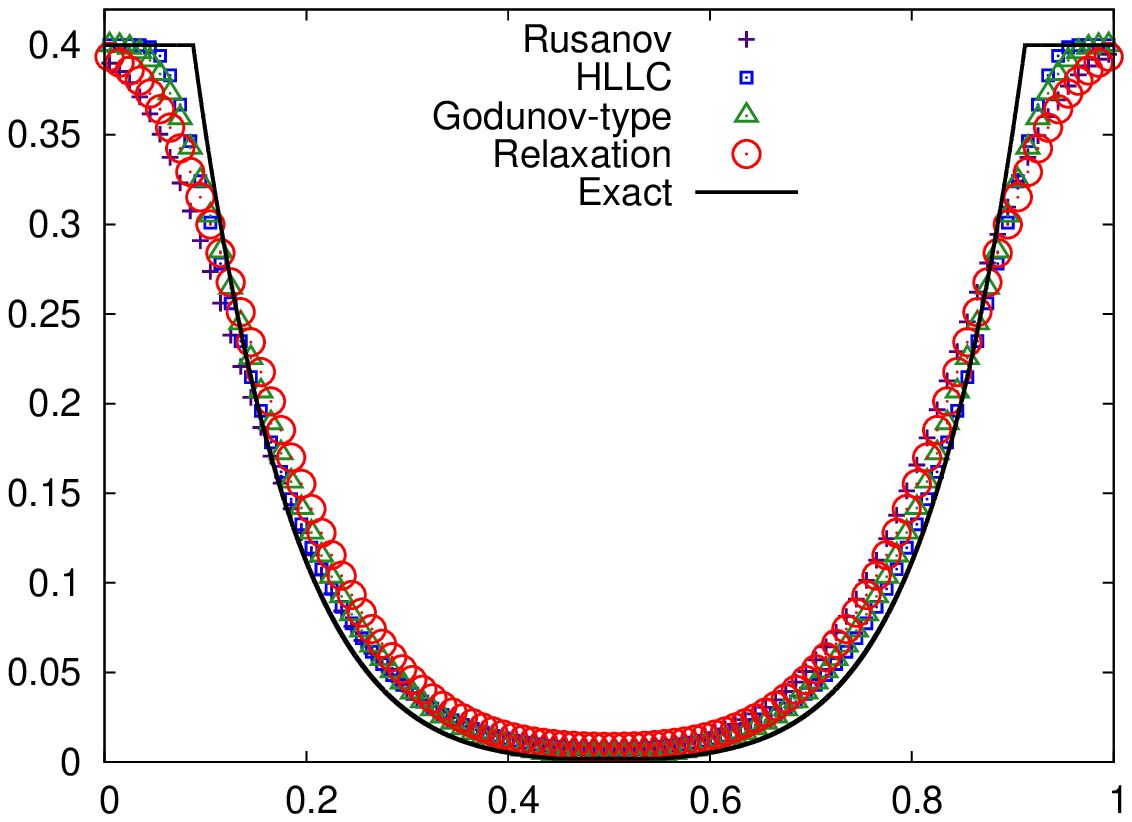} &
\includegraphics[width=7cm,height=4cm]{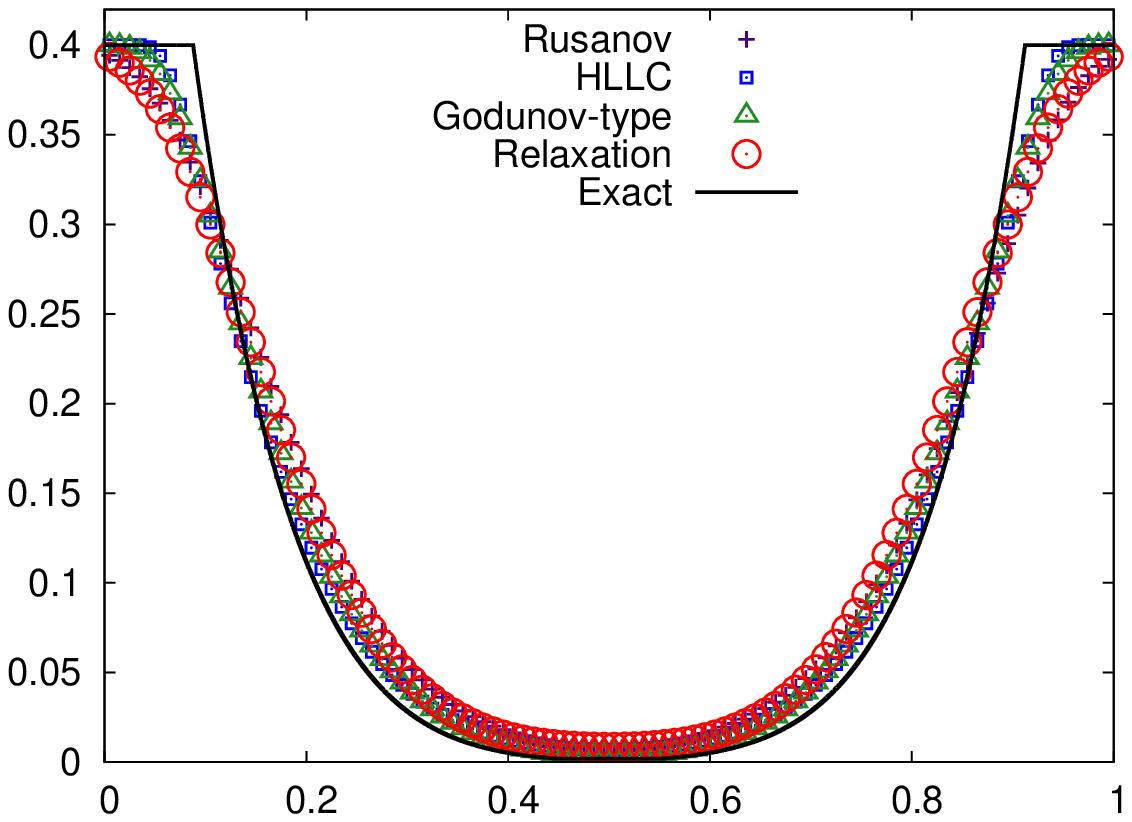}
\end{tabular}
\protect \parbox[t]{13cm}{\caption{Test-case 3: Structure of the solution and space variations of the physical variables at the final time $T_{\rm max}=0.15$. Mesh size: $100$ cells.\label{Figcase3}}}
\end{center}
\end{figure}

%%%%%%%%%%%%%%%%%%%%%%%%%% TC4 %%%%%%%%%%%%%%%%%%%%%%%%%%%%%%%%%%
\subsection{Results for Test-case 4}

\begin{table}[ht!]
\centering
\begin{tabular}{|hhhhhh|}
\hline
		& Region $L$ 	& Region $-$	& Region $+$	& Region $R*$		& Region $R$ \\
\hline
$\alpha_1$ 	&$1.0$		&$1.0$		&$0.4$		&$0.4$		&$0.4$	\\
$\rho_1$	&$1.6$		&$2.0$		&$1.84850$	&$2.03335$	&$1.62668$	  \\
$u_1$		&$0.80311$	&$0.4$		&$0.91147$	&$0.91147$	&$0.55623$	\\
$p_1$		&$1.3$		&$2.6$		&$2.05277$	&$2.05277$	&$1.02638$		\\
$\rho_2$	&$-$		&$-$		&$4.0$		&$4.0$		&$7.69667$	\\
$u_2$		&$-$		&$-$		&$0.1$		&$0.1$		&$0.74797$	\\
$p_2$		&$-$		&$-$		&$2.45335$	&$2.45335$	&$6.13338$	\\
\hline
\end{tabular} 
 \protect \parbox[t]{13cm}{\caption{Test-case 4: Left, right and intermediate states of the exact solution.\label{Table_TC4}}}
\end{table}

We now consider a Riemann problem in which one of the two phases vanishes in one of the initial states, which means that the corresponding phase fraction $\alpha_1$ or $\alpha_2$ is equal to zero. For this kind of Riemann problem, the $u_2$-contact separates a mixture region where the two phases coexist from a single phase region with the remaining phase. Various examples of such problems were introduced in \cite{SWK}, \cite{SA,ASvipi} or \cite{TT}.

\medskip
The solution is composed of a $\{u_1-c_1\}$-shock wave in the left-hand side (LHS) region where only phase 1 is present. This region is separated by a $u_2$-contact discontinuity from the right-hand side (RHS) region where the two phases are mixed. In this RHS region, the solution is composed of a $u_1$-contact discontinuity, followed by a $\{u_2+c_2\}$-rarefaction wave and a $\{u_1+c_1\}$-shock (see Figure \ref{Figcase4}).

\medskip
In practice, the numerical method requires values of $\alpha_{1,L}$ and $\alpha_{1,R}$ that lie strictly in the interval $(0,1)$. Therefore, in the numerical implementation, we take $\alpha_{1,L}= 1- 10^{-4}$. The aim here is to give a qualitative comparison between the numerical approximation and the exact solution. Moreover, there is theoretically no need to specify left initial values for the phase 2 quantities since this phase is not present in the LHS region. For the sake of the numerical simulations however, one must provide such values. We choose to set $\rho_{2,L}$, $u_{2,L}$ and $p_{2,L}$ to the values on the right of the $u_2$-contact discontinuity, which is coherent with the preservation of the Riemann invariants of this wave, and avoids the formation of fictitious acoustic waves for phase 2 in the LHS region.  For the relaxation scheme, this choice enables to avoid oscillations of phase 2 quantities in the region where phase 2 is not present. However, some tests have been conducted that assess that taking other values of $(\rho_{2,L},u_{2,L},p_{2,L})$ has little impact on the phase 1 quantities as well as on the phase 2 quantities where this phase is present.

\medskip
We can see that for the same level of refinement, the relaxation method, the Godunov-type method and the HLLC method are more accurate than Rusanov's scheme, which can be seen especially for phase 1. As regards the region where phase 2 does not exist, we can see that the three other methods are much more stable than Rusanov's scheme. Indeed, theses schemes behave better than Rusanov's scheme when it comes to divisions by small values of $\alpha_2$, since the solution approximated by Rusanov's scheme develops quite large values.

\begin{figure}[ht!]
\begin{center}
\begin{tabular}{cc}
Wave structure & $\alpha_1$ \\[1ex]
\includegraphics[width=7cm,height=4cm]{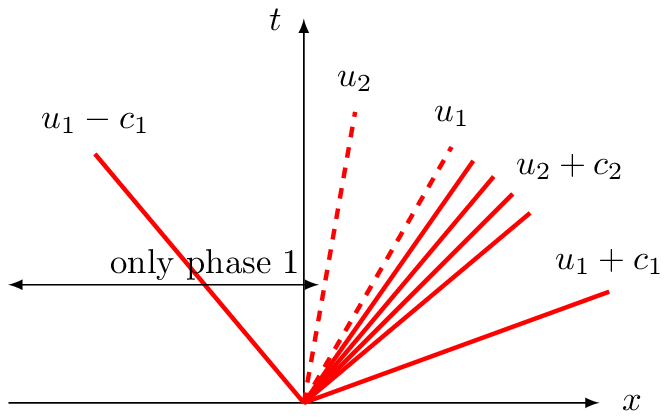}&
\includegraphics[width=7cm,height=4cm]{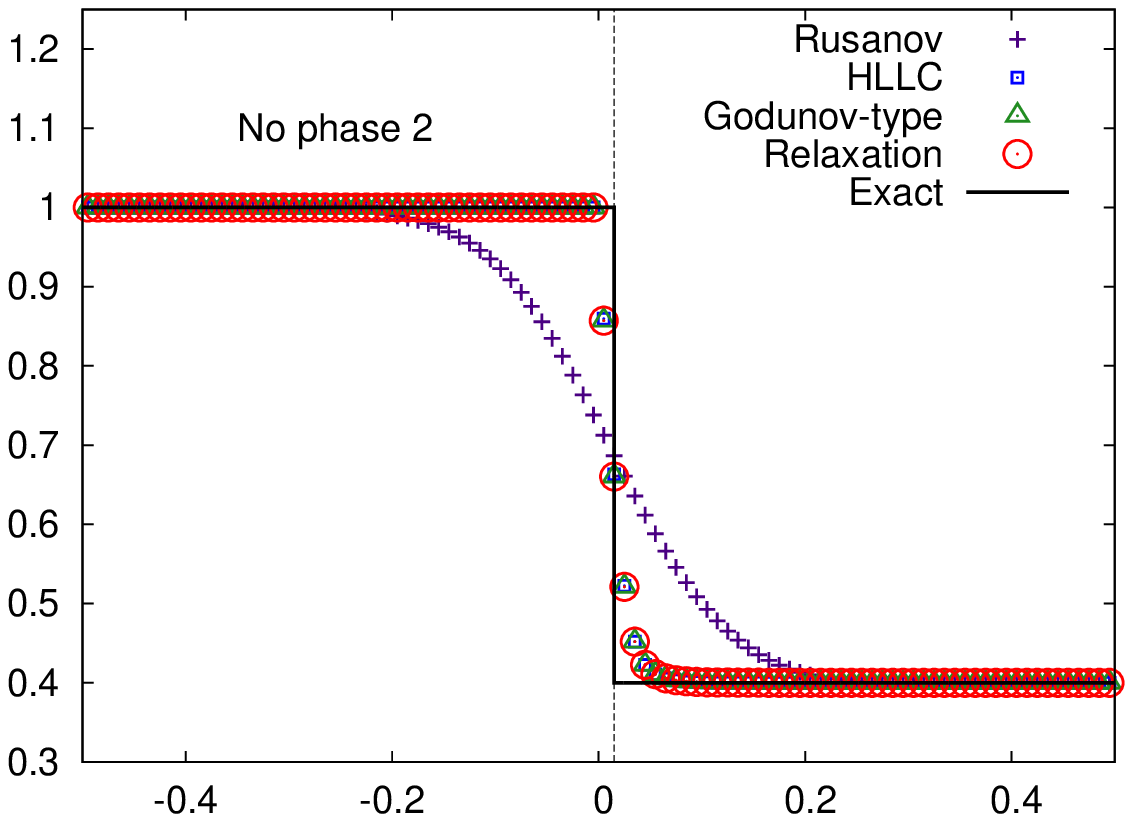} \\[3ex]
$u_1$ & $u_2$\\[1ex]
\includegraphics[width=7cm,height=4cm]{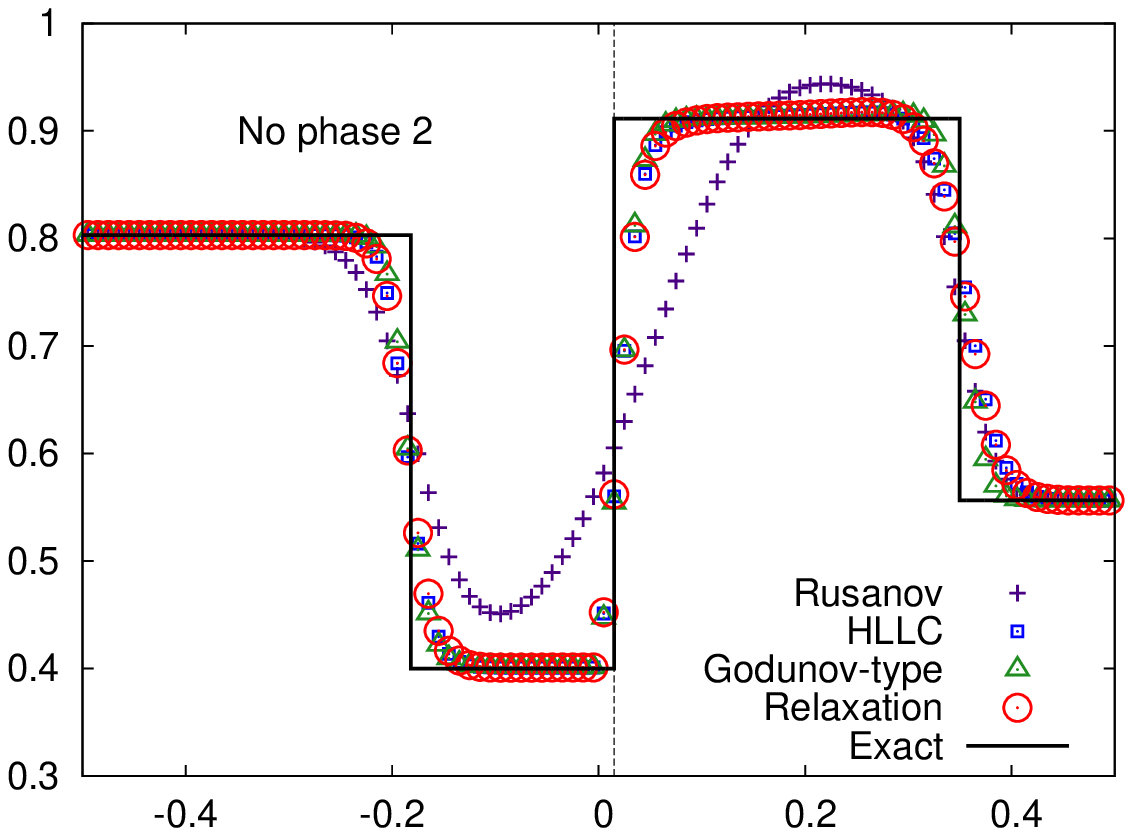} &
\includegraphics[width=7cm,height=4cm]{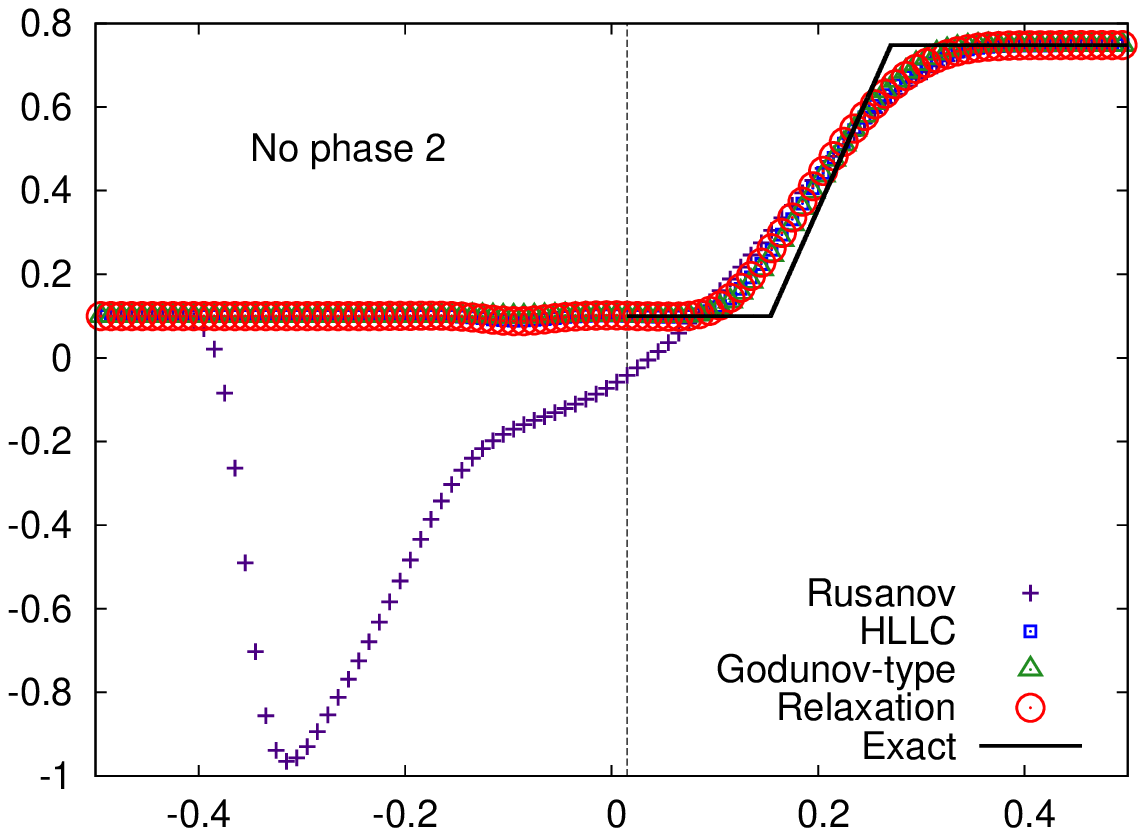} \\[3ex]
$\rho_1$ & $\rho_2$ \\[1ex]
\includegraphics[width=7cm,height=4cm]{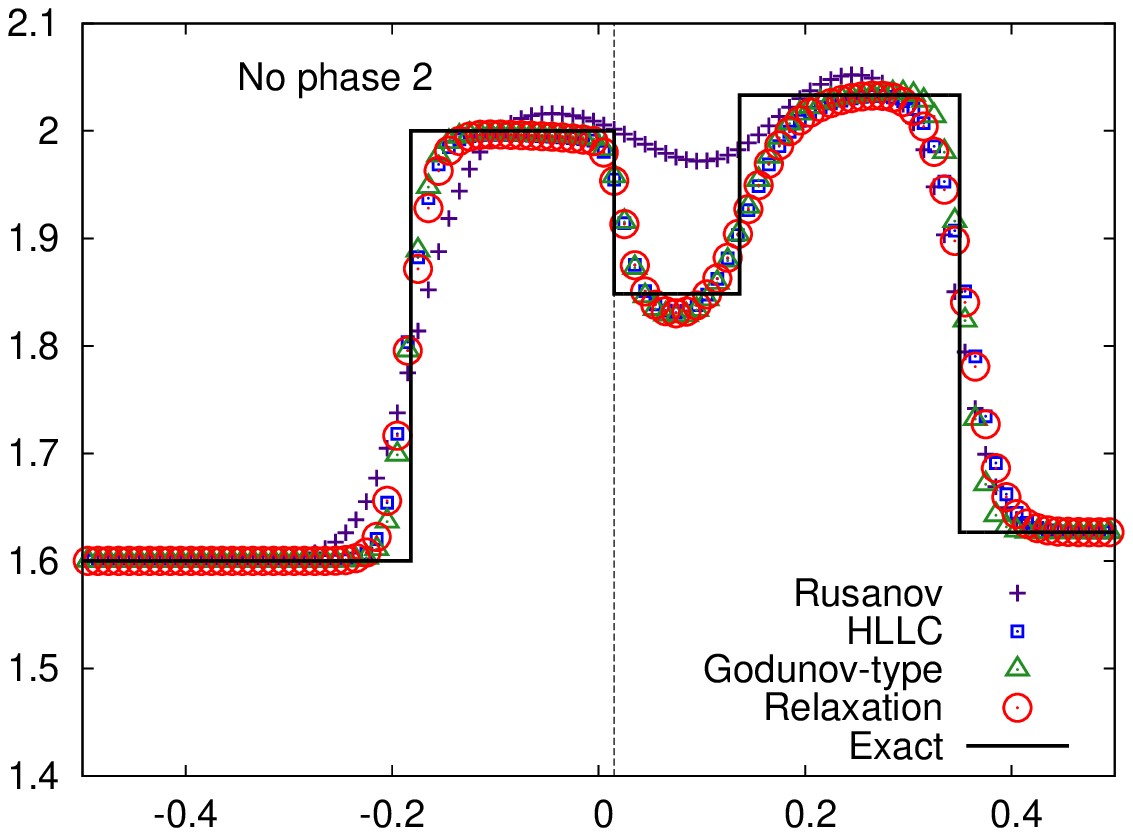} &
\includegraphics[width=7cm,height=4cm]{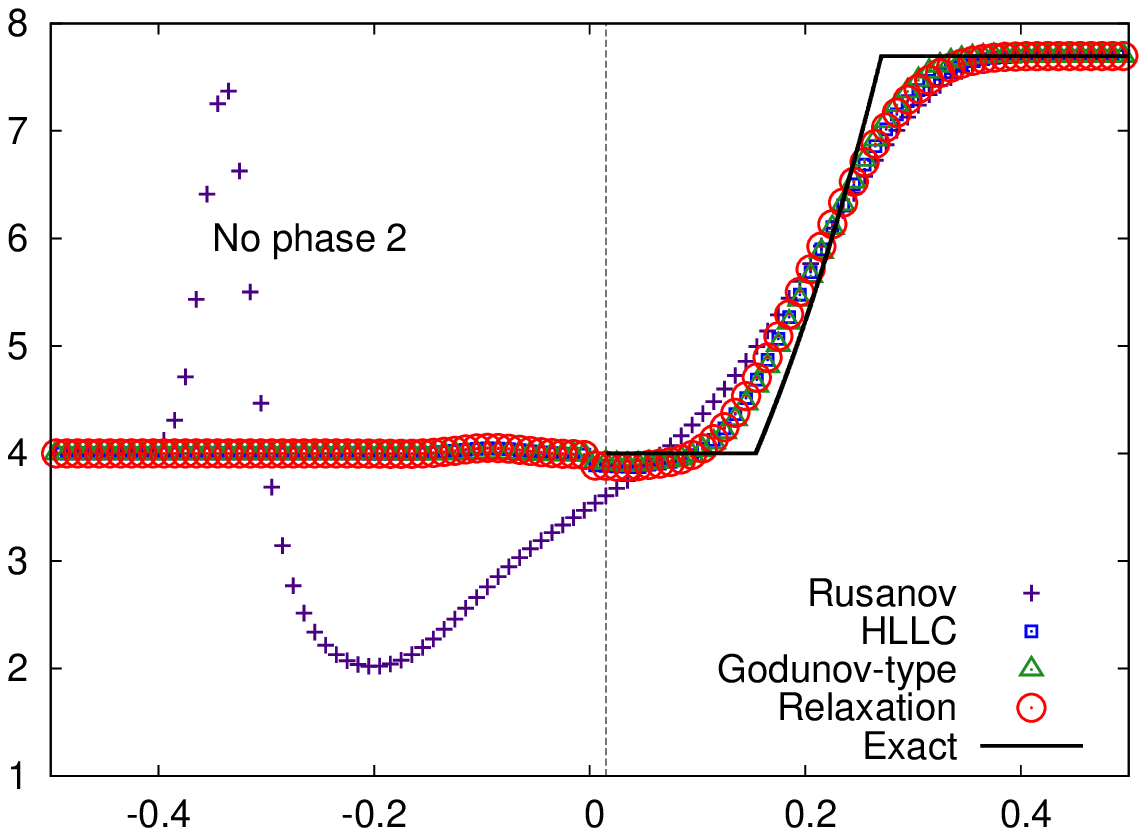}\\[3ex]
$p_1$  & $p_2$\\[1ex]
\includegraphics[width=7cm,height=4cm]{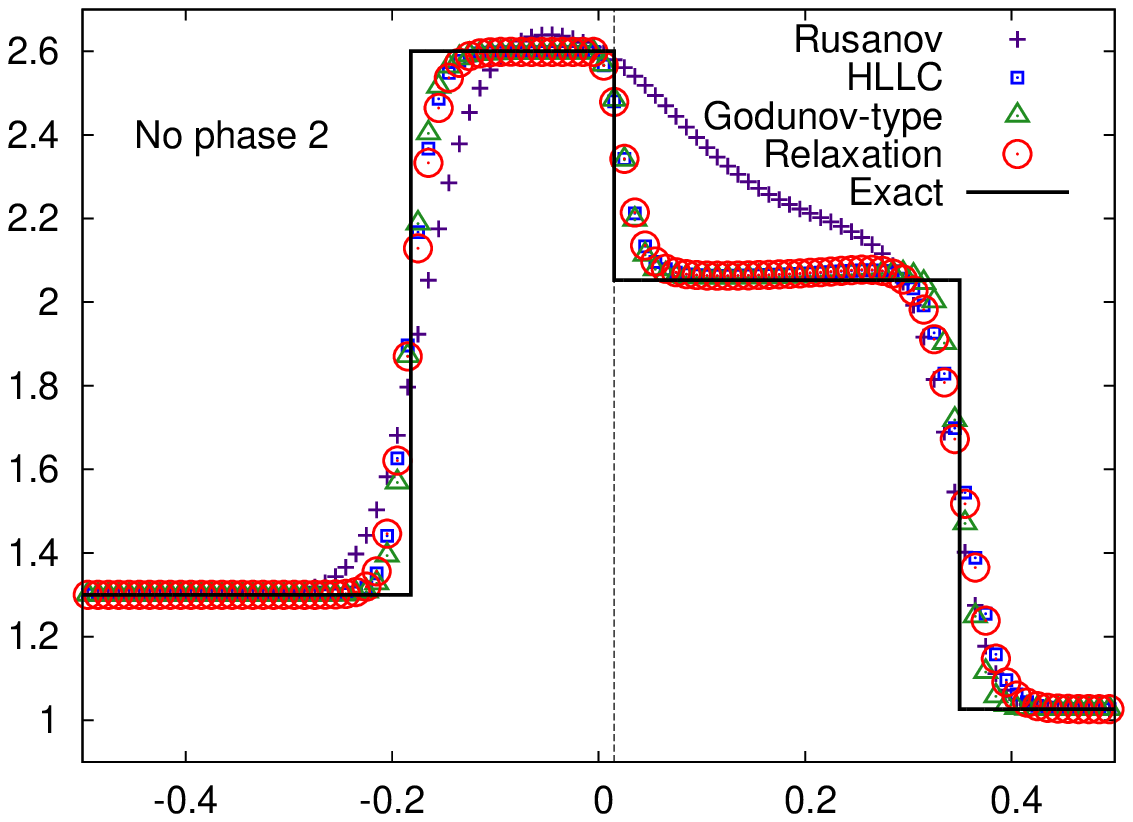} &
\includegraphics[width=7cm,height=4cm]{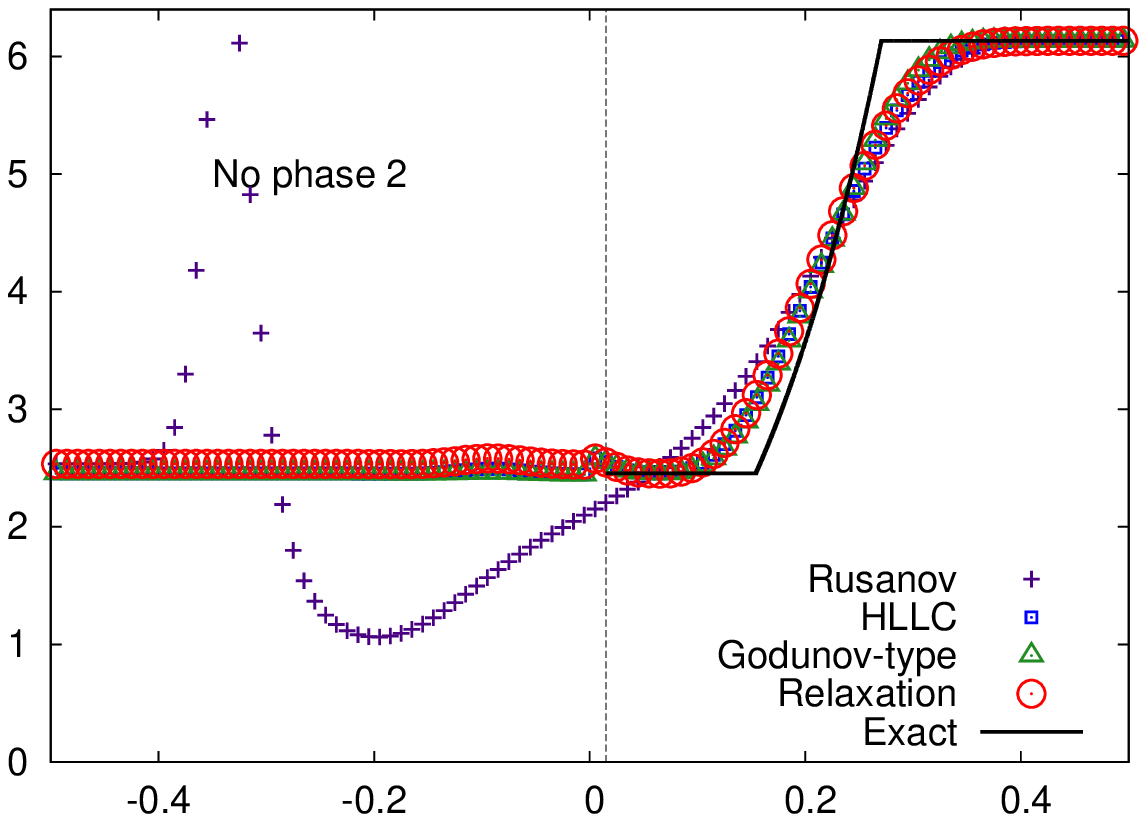}
\end{tabular}
\protect \parbox[t]{13cm}{\caption{Test-case 4: Structure of the solution and space variations of the physical variables at the final time $T_{\rm max}=0.15$. Mesh size: $100$ cells.\label{Figcase4}}}
\end{center}
\end{figure}
%%%%%%%%%%%%%%%%%%%%%%%%%% TC5 %%%%%%%%%%%%%%%%%%%%%%%%%%%%%%%%%%

\subsection{Results for Test-case 5}
\label{test_5}

\begin{table}[ht!]
\centering
\begin{tabular}{|ggggg|}
\hline
		& Region $L$ 	& Region $-$	& Region $+$	& Region $R$		 \\
\hline
$\alpha_1$ 	&$1.0$		&$1.0$		&$0.0$		&$0.0$			\\
$\rho_1$	&$1.6$		&$2.0$		&$-$		&$-$		  \\
$u_1$		&$1.79057$	&$1.0$		&$-$		&$-$			\\
$p_1$		&$5.0$		&$10.0$		&$-$		&$-$			\\
$\rho_2$	&$-$		&$-$		&$2.0$		&$2.67183$				\\
$u_2$		&$-$		&$-$		&$1.0$		&$1.78888$			\\
$p_2$		&$-$		&$-$		&$10.0$		&$15.0$			\\
\hline
\end{tabular}
 \protect \parbox[t]{13cm}{\caption{Test-case 5: Left, right and intermediate states of the exact solution.\label{Table_TC5}}}
\end{table}

The last test-case considers the coupling between two pure phases. A left region, where only phase 1 exists ($\alpha_{1,L}=1$), is separated by a $u_2$-contact discontinuity from a right region, where only phase 2 is present ($\alpha_{1,R}=0$). In the existence region of phase, the solution is composed of a shock (phase 1) or a rarefaction wave (phase 2).

\medskip
In the numerical implementation, we set $\alpha_{1,L}= 1- 10^{-9}$ and $\alpha_{1,R}=10^{-9}$. In addition, in the LHS region, where phase 2 is absent, we choose to set $\rho_{2,L}$, $u_{2,L}$ and $p_{2,L}$ to the values on the right of the $u_2$-contact discontinuity \textit{i.e.} to the values $\rho_{2}^+$, $u_{2}^+$ and $p_{2}^+$. The symmetric choice is made for phase 1 in the RHS region: we set $\rho_{1,R}=\rho_{1}^-$, $u_{1,R}=u_{1}^-$ and $p_{1,R}=p_{1}^-$. Another choice could have been made for the initialization of the absent phase by imposing an instantaneous local thermodynamical equilibrium between the phases at time $t=0$. This would be coherent with the relaxation zero-th order source terms that are usually added to the model when simulating practical industrial configurations.

\medskip
The results are given in Figure \ref{Figcase5}. One can see that, in the LHS region, the quantities of the only present phase 1 are correctly approximated while the quantities of the vanishing phase 2 remain stable despite the division by small values of $\alpha_2$. The same observation can be made for the RHS region. On the contrary, Rusanov's scheme fails to approximate such a vanishing phase solution.

\begin{figure}[ht!]
\begin{center}
\begin{tabular}{cc}
Wave structure & $\alpha_1$ \\[1ex]
\includegraphics[width=7cm,height=4cm]{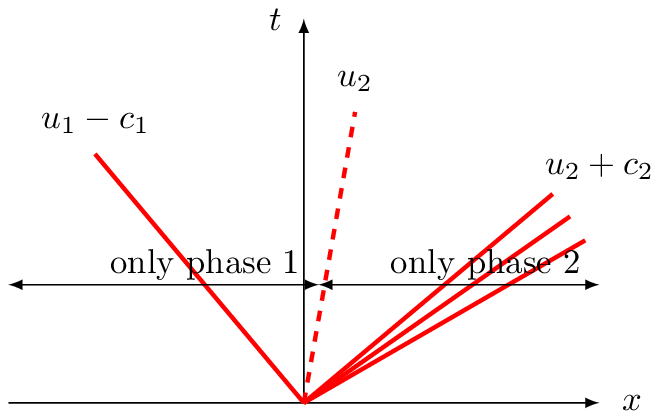}&
\includegraphics[width=7cm,height=4cm]{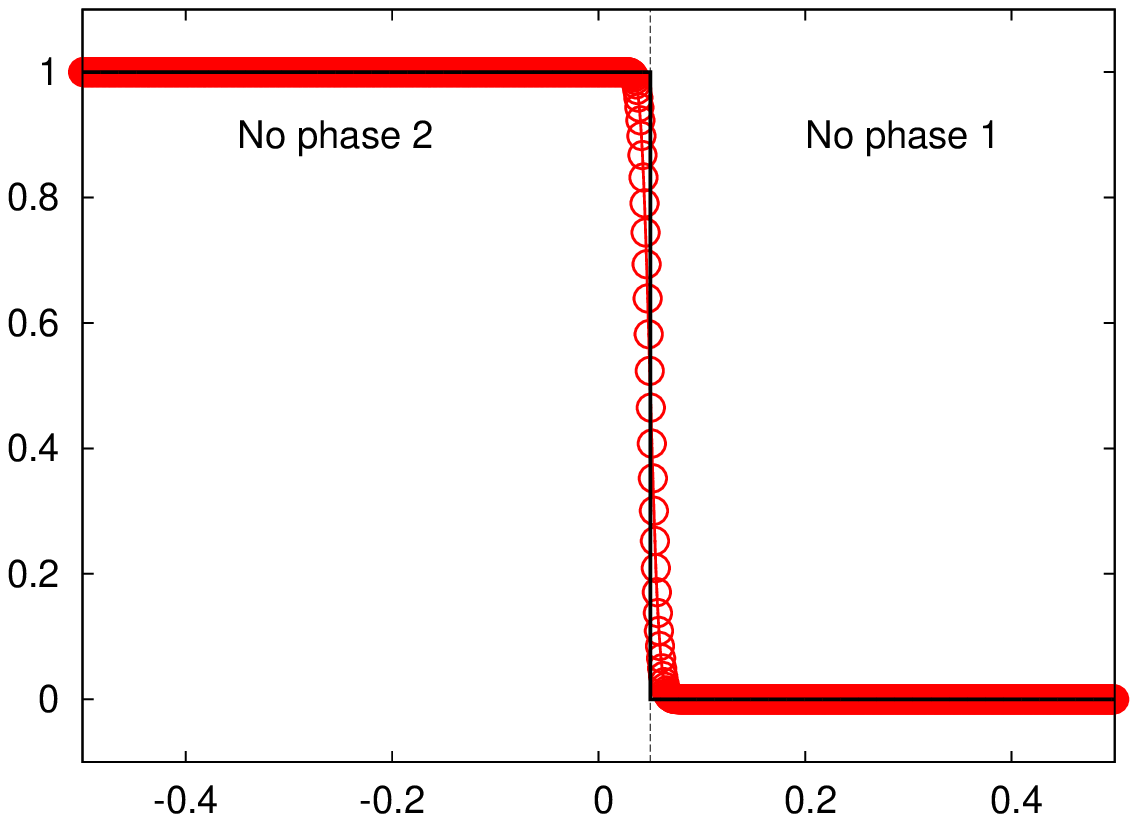} \\[3ex]
$u_1$ & $u_2$\\[1ex]
\includegraphics[width=7cm,height=4cm]{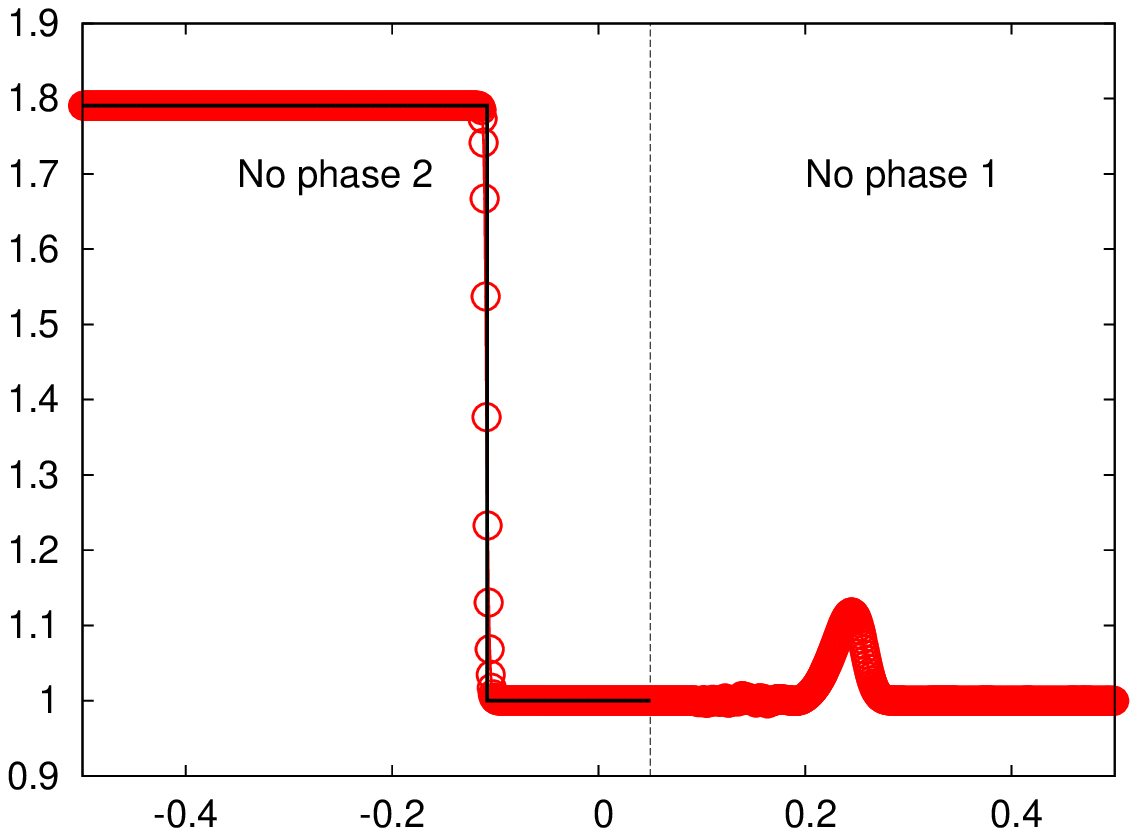} &
\includegraphics[width=7cm,height=4cm]{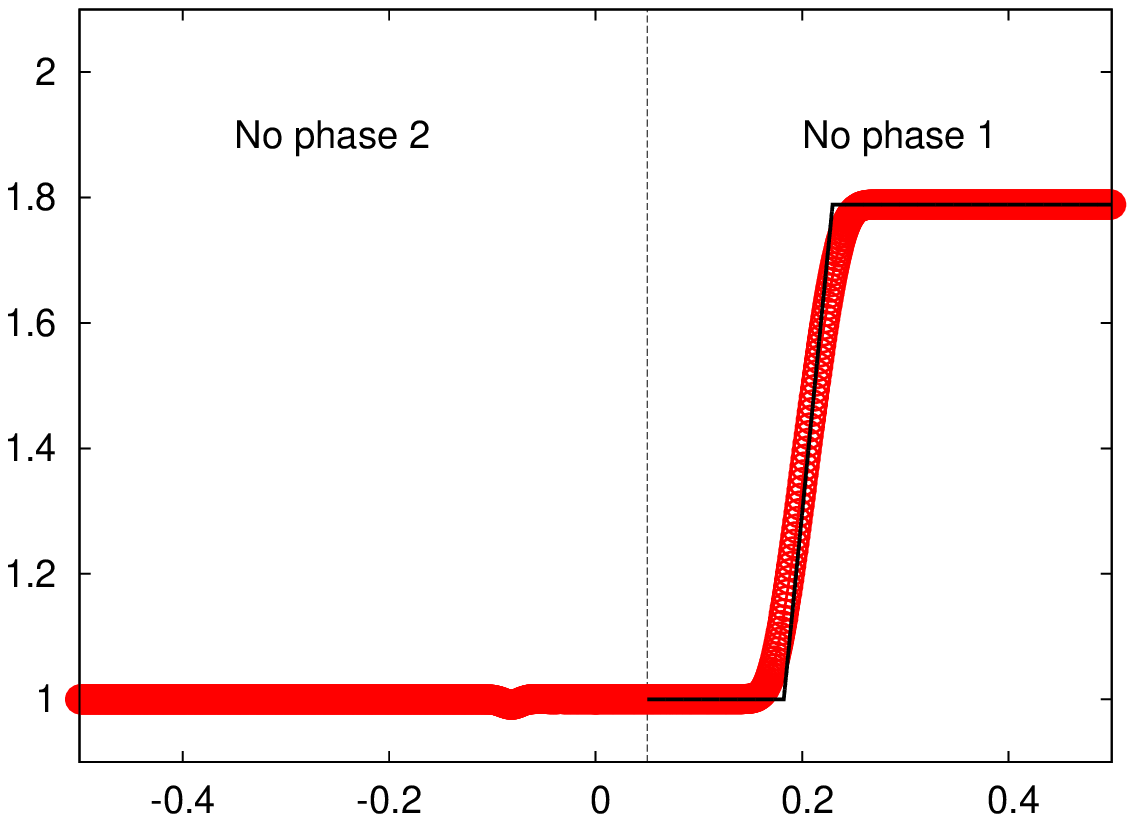} \\[3ex]
$\rho_1$ & $\rho_2$ \\[1ex]
\includegraphics[width=7cm,height=4cm]{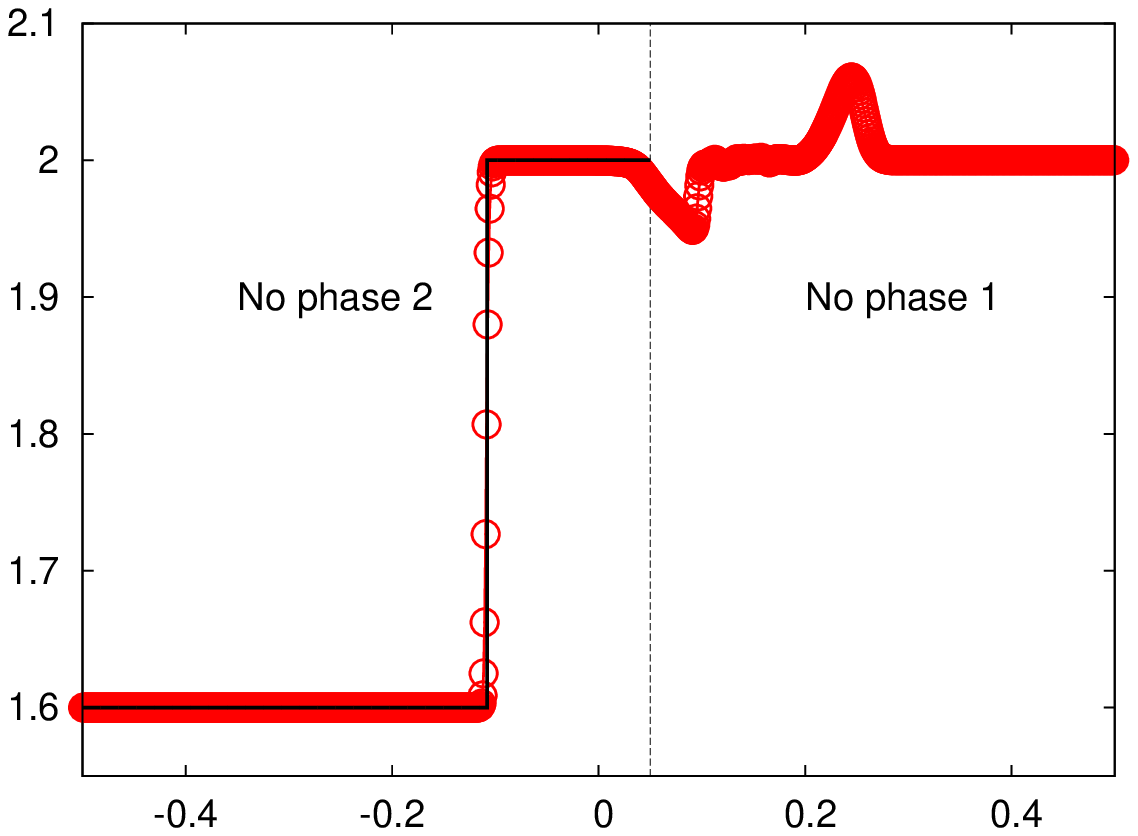} &
\includegraphics[width=7cm,height=4cm]{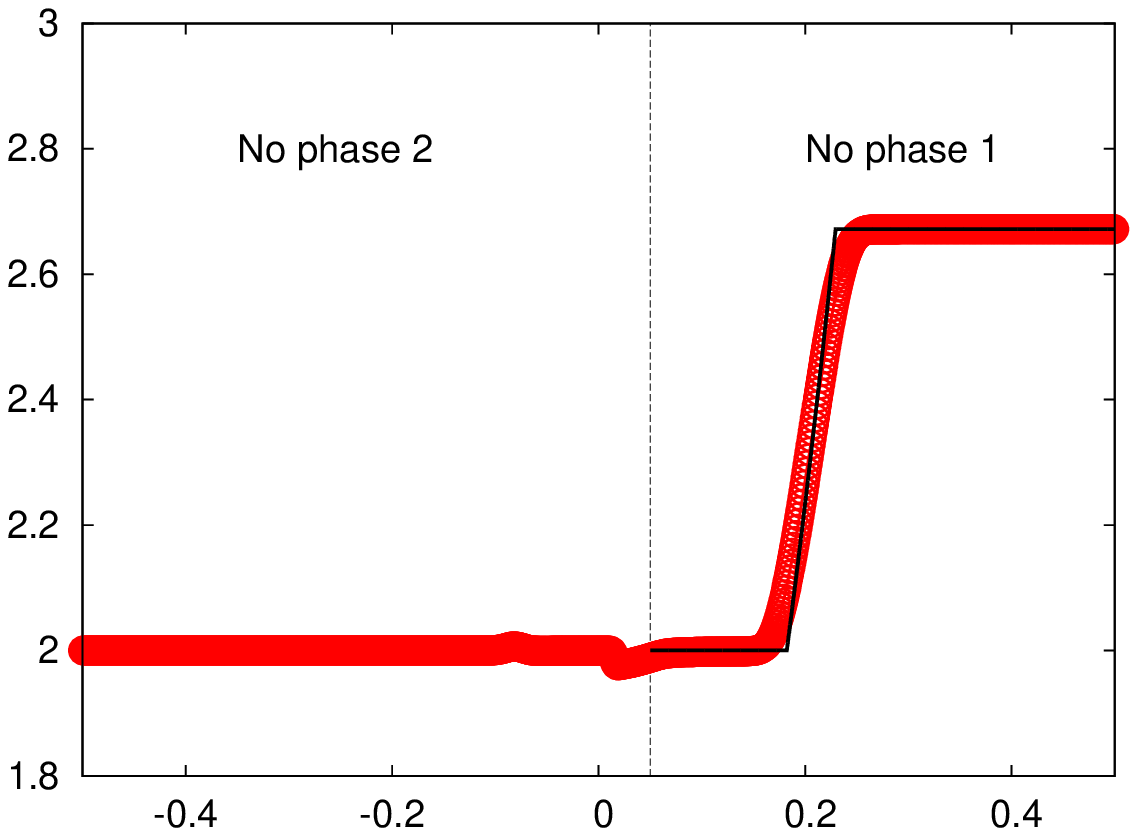}\\[3ex]
$p_1$  & $p_2$\\[1ex]
\includegraphics[width=7cm,height=4cm]{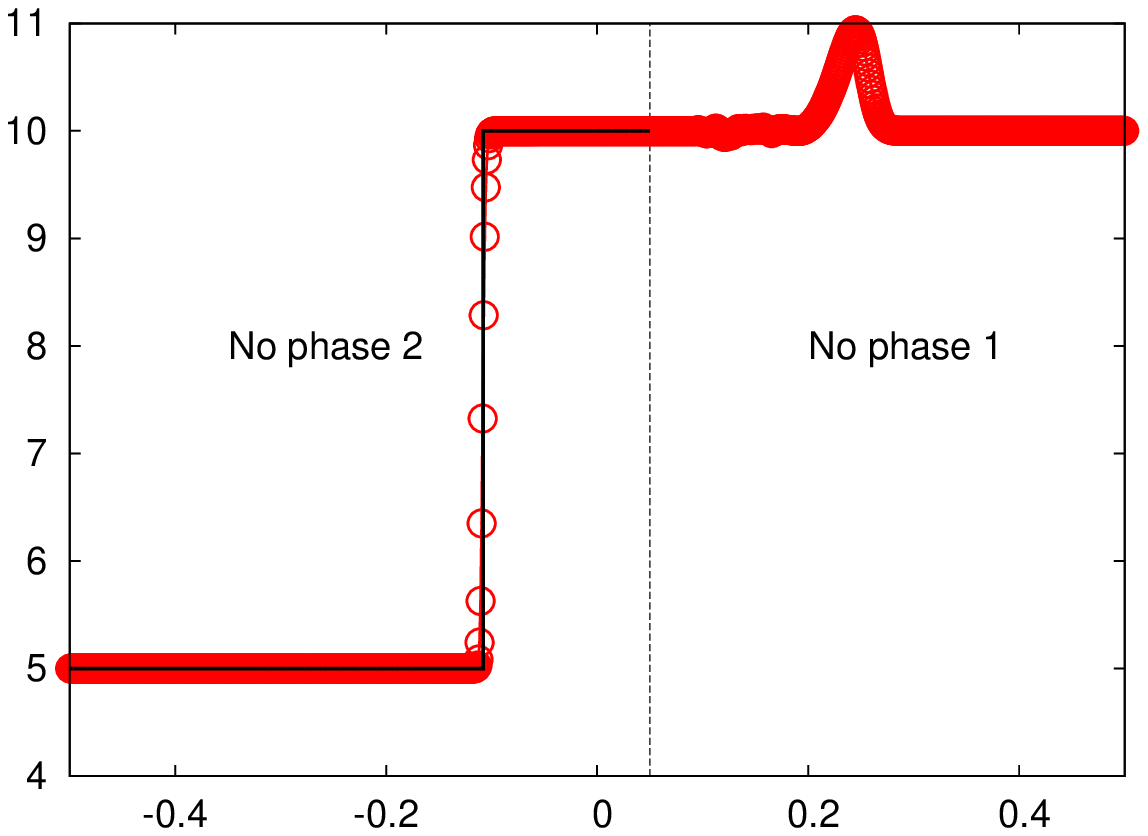} &
\includegraphics[width=7cm,height=4cm]{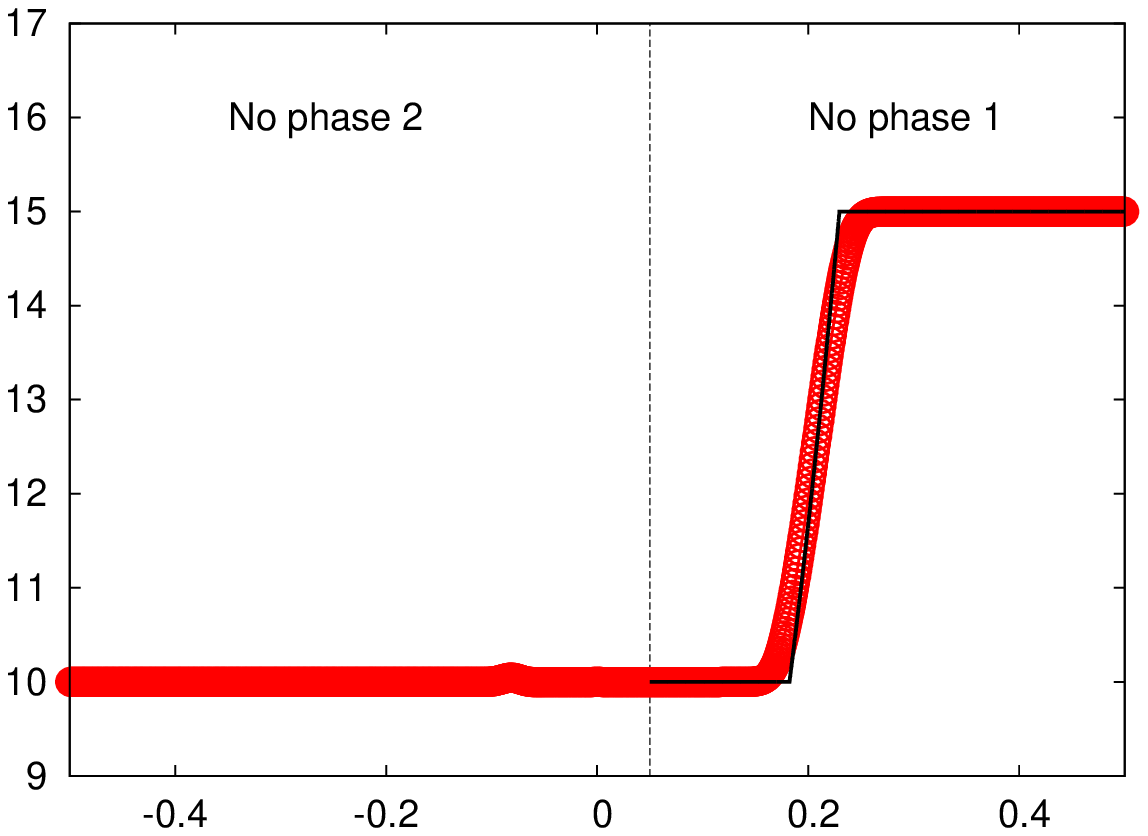}
\end{tabular}
\protect \parbox[t]{13cm}{\caption{Test-case 5: Structure of the solution and space variations of the physical variables at the final time $T_{\rm max}=0.05$. Mesh size: $1000$ cells. Straight line: exact solution, circles: relaxation scheme.\label{Figcase5}}}
\end{center}
\end{figure}

%\clearpage
%\newpage

\section{Conclusion}
\label{sec_conclusion}
The work performed in \cite{CHSS} and in the present paper provides an accurate and robust finite volume scheme for approximating the entropy dissipating weak solutions of the Baer-Nunziato two-phase flow model. The scheme relies on an exact Riemann solver for a relaxation approximation \emph{\`a la Suliciu} of the convective part of the Baer-Nunziato model.
To our knowledge, this is the only existing scheme for which the approximated phase fractions, phase densities and phase internal energies are proven to remain positive without any smallness assumption on the data or on the phase fraction gradient. In addition, it is the only scheme for which discrete counterparts of the entropy inequalities satisfied by the exact solutions of the model are proven for all thermodynamically admissible equations of state, under a fully computable CFL condition.

The scheme is well-adapted for subsonic flows (in terms of the relative velocity between the phases) and flows for which the phases are close to the thermodynamical and mechanical equilibrium, a state which is characterized by the equality of pressures, velocities and temperatures of both phases. This numerical method is therefore a natural candidate for simulating the convective part of the complete two-phase flow model, where zero-th order source terms are added to account for the relaxation phenomena that tend to bring the two phases towards thermodynamical, mechanical and chemical equilibria. When these source terms are added, the relaxation scheme can be implemented within a fractional step procedure in order to account for all the physical effects. In such a procedure, the first step is the treatment of the first order part of the Baer-Nunziato model thanks to the relaxation scheme, while the following steps consist in successive ODE solvers where the various relaxation effects are treated separately. To fix ideas, let us recall the general form of the full Baer-Nunziato model with relaxation source terms: 
\begin{equation}
\label{chap2bn_eq1AE}
\begin{array}{lll}
\dv_t \alpha_k + u_2 \dv_x \alpha_k = \Phi_k, \\
\dv_t (\alpha_k \rho_k) + \dv_x (\alpha_k \rho_k u_k) = \Gamma_k,\\
\dv_t (\alpha_k \rho_k u_k) + \dv_x (\alpha_k \rho_k u_k^2 + \alpha_k p_k) - p_1 \dv_x \alpha_k =D_k+ \mathscr{U} \Gamma_k,\\
\dv_t (\alpha_k \rho_k E_k) + \dv_x (\alpha_k \rho_k E_k u_k+ \alpha_k p_k u_k) - u_2 p_1 \dv_x \alpha_k = Q_k + \mathscr{U} D_k + \mathscr{H} \Gamma_k - p_1\Phi_k, 
\end{array}
\end{equation}
where $\mathscr{U}=\frac 12 (u_1+u_2)$ and $\mathscr{H}=\frac 12 u_1 u_2$. The quantities $\Phi_k$, $\Gamma_k$, $D_k$ and $Q_k$ account respectively for the relaxation of pressures, chemical potentials, velocities and temperatures according to:
\begin{equation*}
 \begin{array}{ll}
  \Phi_k  = \Theta_p (p_k-p_{3-k}), & \qquad
  \Gamma_k = \Theta_\mu (\mu_{3-k}-\mu_k),\\
  D_k = \Theta_u (u_{3-k}-u_k), & \qquad
  Q_k = \Theta_T (T_{3-k}-T_k).
 \end{array}
\end{equation*}
We refer to \cite{LDGH} for the precise definition of the chemical potentials $\mu_k$ and that of the positive quantities $\Theta_p$, $\Theta_\mu$, $\Theta_u$ and $\Theta_T$. In \cite{LDGH}, a fractional step method is described for the treatment of these source terms. It is proven that, provided stiffened gas \eos\, for both phases, every ODE-type step of this method is well posed in the sense that existence and uniqueness of the considered quantities are guaranteed in the relevant intervals. Moreover, at the semi-discrete level in time, each one of these steps is compatible with the total entropy inequality satisfied by \eqref{chap2bn_eq1AE}:
\begin{multline*}
 \dv_t \Big( \sum_{k=1,2} \alpha_k \rho_k s_k \Big ) + \dv_x \Big ( \sum_{k=1,2} \alpha_k \rho_k s_k u_k \Big ) \\  \leq -\frac{\Theta_p}{T_2}(p_1-p_2)^2 -\Theta_\mu(\mu_1-\mu_2)^2-\Theta_u \frac{T_1 + T_2}{2 T_1 T_2} (u_1-u_2)^2 - \frac{\Theta_T}{T_1 T_2} (T_1-T_2)^2.
\end{multline*}

The relaxation scheme was specially designed for the simulation of vanishing phase solutions, where in some areas of the flow, the fluid is quasi monophasic \textit{i.e.} one of the phases has nearly disappeared. In particular, the scheme has been proven to robustly handle sharp interfaces between two quasi-monophasic regions as assessed by the results of Test-case 5  (see Section \ref{test_5}). Simulating vanishing phase solutions is a crucial issue for a detailed investigation of incidental configurations in the nuclear industry such as the Departure from Nucleate Boiling (DNB) \cite{dnb}, the Loss
of Coolant Accident (LOCA) \cite{loca} or the Reactivity Initiated Accident (RIA) \cite{ria}. Some numerical methods had already been proposed for the approximation of vanishing phase solutions (\cite{SWK,TT}). The work performed in \cite{CHSS} and in the present paper provides a detailed theoretical and numerical answer to the robustness issues rising up when attempting to simulate vanishing phase solutions.

Despite a relatively complex theory aiming at constructing the underlying approximate Riemann solver, and at analyzing the main properties of the numerical method (positivity, discrete entropy inequalities), the proposed scheme is a rather simple scheme as regards its practical implementation as explained in the appendices \ref{sec_app}. The scheme applies for all equations of state for which the pressure is a given function of the density and of the specific internal energy. In particular, this allows the use of incomplete or tabulated equations of state.

It appears that the relaxation scheme has similar performances as two of the most popular existing schemes available for the Baer-Nunziato model, namely Schwendeman-Wahle-Kapila's first order accurate Godunov-type scheme \cite{SWK} and Toro-Tokareva's finite volume HLLC scheme \cite{TT}. In addition, the scheme compares very favorably with Lax-Friedrichs type schemes that are commonly used in the nuclear industry for their known robustness and simplicity. As a matter of fact, the relaxation finite volume scheme was proved to be much more accurate than Rusanov's scheme for the same level of refinement. In addition, for a prescribed level of accuracy (in terms of the $\xL^1$-error for instance), the computational cost of the relaxation scheme is much lower than that of Rusanov's scheme. Indeed, for some test-cases, reaching the same level of accuracy on some variables may require more than a hundred times more CPU-time to Rusanov's scheme than to the relaxation scheme! In a recent benchmark on numerical methods for two-phase flows \cite{benchmark}, the relaxation scheme was proven to compare very well with various other schemes in terms of CPU-time performances as well as robustness \cite{Dallet}.

Thanks to the invariance of the Baer-Nunziato model under Galilean transformations, the finite volume formulation of the relaxation scheme allows a straightforward extension to 2D and 3D unstructured meshes.
As a matter of fact, the scheme has already been implemented in a proprietary module for 3D two-phase flows developed by the French national electricity company EDF within the framework of the industrial CFD code \textit{Code\_Saturne} \cite{saturne}. The scheme has been successfully applied within nuclear safety studies, for numerical simulations of the primary circuit of pressurized water reactors. A forthcoming paper is in preparation, where the relaxation scheme is used for the simulation of 3D industrial cases.

\clearpage 

\section{Appendices}
\label{sec_app}

\subsection{Construction of the solution to the Riemann problem \eqref{BNrelax_entrop_conv}-\eqref{relax_CI}.}
\label{constr_sol}

Given $(\vect{W}_L,\vect{W}_R,a_1,a_2)$ (satisfying $\T_{k,L}=\tau_{k,L}$ and $\T_{k,R}=\tau_{k,R}$ for $k\in\unde$) such that the conditions of Theorem \ref{relax_thm} are met, we give the expression of the piecewise constant solution of the Riemann problem \eqref{BNrelax_entrop_conv}-\eqref{relax_CI}. For the sake of simplicity, the solution will be expressed in non conservative variables $\tW=(\alpha_1,\tau_1,\tau_2,u_1,u_2,\pi_1,\pi_2,\E_1,\E_2)$.

In practice, when implementing the numerical scheme, the relaxation Riemann solution of \eqref{BNrelax_entrop_conv}-\eqref{relax_CI} is used to compute the numerical fluxes at each interface between two states $(\U_L,\U_R)$ and the relaxation states $(\vect{W}_L,\vect{W}_R)$ are actually computed from these two states $(\U_L,\U_R)$. For this reason, the solution will be denoted
\[
 \xi \longmapsto \tW(\xi;\U_L,\U_R;a_1,a_2).
\]

\bigskip
We recall the following notations built on the initial states $(\vect{W}_L,\vect{W}_R)$ (and therefore depending on $(\U_L,\U_R)$) and on the relaxation parameters $(a_1,a_2)$, which are useful for the computation of the solution.

\medskip
For $k$ in $\lbrace 1,2 \rbrace$:
\begin{equation}
\label{diese1}
\begin{aligned}
 \udd_k(\U_L,\U_R;a_k) &:= \dfrac{1}{2} \left (u_{k,L}+u_{k,R} \right )-\dfrac{1}{2a_k} \left (p_{k,R} - p_{k,L} \right ),
\\
\pidd_k(\U_L,\U_R;a_k)  &:= \dfrac{1}{2}  \left (p_{k,R} + p_{k,L} \right )- \dfrac{a_k}{2} \left (u_{k,R}- u_{k,L} \right ),
\\
\tdd_{k,L}(\U_L,\U_R;a_k)  &:= \tau_{k,L} + \dfrac{1}{a_k}(\udd_k(\U_L,\U_R;a_k)  - u_{k,L}), 
% = \tau_{k,L}+\dfrac{1}{2a_k}(u_{k,R} - u_{k,L}) - \dfrac{1}{2a_k^2}(p_k(\tau_{k,R}) - p_k(\tau_{k,L})),
\\
\tdd_{k,R}(\U_L,\U_R;a_k)  &:= \tau_{k,R} - \dfrac{1}{a_k}(\udd_k(\U_L,\U_R;a_k)  - u_{k,R}).
\end{aligned}
\end{equation}
We also recall the dimensionless number of equation \eqref{chap3notabis}:
\begin{equation*}
\Lambda^\alpha(\U_L,\U_R)  :=\dfrac{\alpha_{2,R}-\alpha_{2,L}}{\alpha_{2,R}+\alpha_{2,L}}, % \in (-1,1).
\end{equation*}
and define as $\Udd(\U_L,\U_R;a_1,a_2)$ the central expression of assumption $\B$ of Theorem \ref{relax_thm}:
\begin{equation*}
\Udd(\U_L,\U_R;a_1,a_2)  := \dfrac{\udd_1(\U_L,\U_R;a_1) -\udd_2(\U_L,\U_R;a_2) -\frac{1}{a_2}\Lambda^\alpha  (\U_L,\U_R) \Big(\pidd_1(\U_L,\U_R;a_1) -\pidd_2(\U_L,\U_R;a_2) \Big )}{1+\frac{a_1}{a_2}|\Lambda^\alpha(\U_L,\U_R)|}. 
\end{equation*} 
% These quantities only depend on the parameters $(a_1,a_2)$ and on the velocities and densities of the initial states (in particular, they are independent of the initial phase fractions $\alpha_{1,L}$ and $\alpha_{1,R}$). 

Later in this section, we will omit the dependency of these quantities on $(\U_L,\U_R;a_1,a_2)$. Following Theorem \ref{relax_thm}, if $a_1$ and $a_2$ are such that $\tdd_{1,L}$, $\tdd_{1,R}$, $\tdd_{2,L}$, $\tdd_{2,R}$ are positive, and if condition $\A$ which reads $-a_1\tdd_{1,R}< \Udd < a_1 \tdd_{1,L}$ holds true, then there exists a self-similar solution to the Riemann problem \eqref{BNrelax_entrop_conv}-\eqref{relax_CI}. Following \cite{CHSS}, we distinguish three different cases corresponding 
to different orderings of the kinematic waves, $u_1^*< u_2^*$, $u_1^*= u_2^*$ or $u_1^*> u_2^*$. With each one of these wave configurations is associated a different expression of assumption $\B$ depending on the sign of $\Udd$.

%\noindent $\bullet$ \textit{\textbf{The solution has the ordering $\bf u_2^* < u_1^*$}} \ 
\subsubsection*{Solution with the wave ordering $\bf u_2^* < u_1^*$:}
\label{constr_12}
The solution $\xi \mapsto \tW(\xi;\U_L,\U_R;a_1,a_2)$ has the wave ordering $ u_2^* < u_1^*$ if the following assumption holds:
\begin{equation*}
 \Bu \qquad 0 < \Udd < a_1 \tdd_{1,L}.\textcolor{white}{-} \
\end{equation*}

\begin{figure}[ht!]
\begin{center}
\includegraphics[width=15cm,height=4cm]{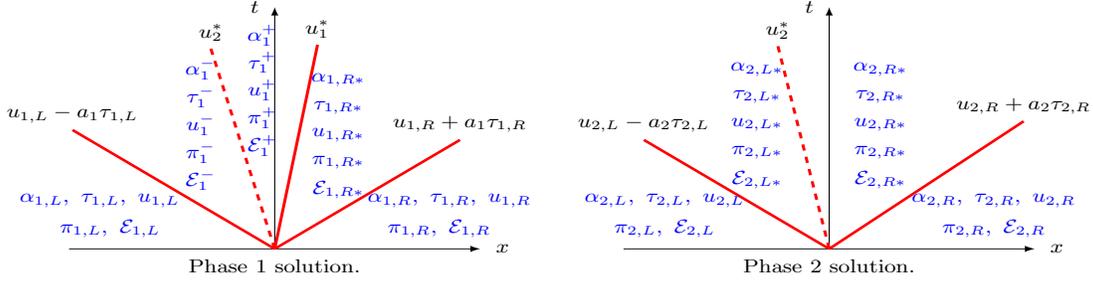}
\protect \parbox[t]{13cm}{\caption{Intermediate states of the exact solution of the Riemann problem \eqref{BNrelax_entrop_conv}-\eqref{relax_CI} with the wave ordering $ u_2^* < u_1^*$ .\label{FigSol1}}}
\end{center}
\end{figure}

The intermediate states, which are represented in Figure \ref{FigSol1}, and the velocities $u_1^*(\U_L,\U_R;a_1,a_2)$ and $u_2^*(\U_L,\U_R;a_1,a_2)$ (simply denoted $u_1^*$ and $u_2^*$ hereafter) are computed through the following steps performed in the very same order. 
\begin{enumerate}
\item Define $\nu := \dfrac{\alpha_{1,L}}{\alpha_{1,R}}$, $\Mdd_L:=\dfrac{\udd_1-\udd_2}{a_1 \tdd_{1,L}}$ and $\Pdd_L:=\dfrac{\pidd_1-\pidd_2}{a_1^2 \tdd_{1,L}}$.
\item Define successively the functions
\begin{align*}
& \M_0(\omega) := \dfrac{1}{2} \left( \dfrac{1+\omega^2}{1-\omega^2} \left( 1+\dfrac{1}{\nu} \right ) - \sqrt{\left(
\dfrac{1+\omega^2}{1-\omega^2}\right )^2 \left( 1+ \dfrac{1}{\nu}
\right )^2 -\dfrac{4}{\nu} }\right ),  \\
& \M_\mu(m) := \dfrac{1}{\nu}\dfrac{m+(1-\mu)\frac{\tdd_{1,R}}{\tdd_{1,L}}}{1-(1-\mu)\frac{\tdd_{1,R}}{\tdd_{1,L}}}, \quad \text{with $\mu \in (0,1)$. For instance $\mu=0.1$,}  \\
&\M(m) :=\min \left ( \M_0 \left (\frac{1- m}{1+m} \right ), \M_\mu(m) \right ),  \\
& \Psi (m) := m+ \dfrac{a_1}{a_2}\dfrac{\alpha_{1,R}}{\alpha_{2,L}+\alpha_{2,R}}\left ((1+\nu) m -2\nu \M \left (m\right) \right ).
\end{align*}

\item  
Use an iterative method (\textit{e.g.} Newton's method or a dichotomy (bisection) method) to compute $\Me_L \in (0,1)$ such that 
\begin{equation}
\label{chap3mel}
\Psi (\Me_L)=
\Mdd_L -\dfrac{a_1}{a_2}\Lambda^\alpha\Pdd_L. 
\end{equation}
According to \cite{CHSS}, $\Me_L$ always exists under $\Bu$ and is unique if $\mu$ is close enough to one. In practice, the iterative method is initialized at $m^0=\max(0,\min(\Mdd_L,1))$.

\item The velocity $u_2^*$ is obtained by $ u_2^* = \udd_1-a_1\tdd_{1,L}\Me_L$.

% \item In order to compute the quantities relative to phase 1, define successively the two functions
% \begin{eqnarray}
% && \M_{1,R*} (m) := \dfrac{\nu \M_0 \left (\frac{1-m}{1+m} \right  )(1+m)}{\frac{\tdd_{1,R}}{\tdd_{1,L}} \left (1+\nu \M_0 \left (\frac{1-m}{1+m} \right  ) \right )+m - \nu \M_0 \left (\frac{1-m}{1+m} \right  )},
% \label{chap3mach3}\\
% && \nonumber \\
% && \nonumber \\
% && \M(m) := 
% \left \lbrace
% 		\begin{array}{ll}
% 			  \M_0 \left (\frac{1-m}{1+m}\right), & \ \ \text{if} \ \left |\M_{1,R*} (m) \right | < 1, \\
% 			   (1-\theta) \dfrac{1}{\nu} \dfrac{m + \frac{\tdd_{1,R}}{\tdd_{1,L}}}{m+2-\frac{\tdd_{1,R}}{\tdd_{1,L}}}, & \ \ \text{otherwise},
% 		 \end{array}
% \right. \label{chap3mdem}
% \end{eqnarray}
% where $\theta \in (0,1)$ is a small parameter. In practice we take $\theta = 0.1$.
\item The velocity $u_1^*$ is obtained by $u_1^* =  u_2^* + \nu a_1  \tdd_{1,L} \M(\Me_L) \dfrac{ 1 - \Me_L}{1 -\M(\Me_L)}$.

\item  The intermediate states for phase 1 are given by
\begin{itemize}
 \item Phase fractions: $\alpha_1^-=\alpha_{1,L}$, $\alpha_1^+=\alpha_{1,R*}=\alpha_{1,R}$.
 \item Specific volumes:
 $$
 \tau_1^- = \tdd_{1,L} \dfrac{ 1 - \Me_L}{1 -\M(\Me_L)}, \quad \tau_1^+ = \tdd_{1,L} \dfrac{ 1 + \Me_L }{1 + \nu \M(\Me_L)}, \quad \tau_{1,R*} = \tdd_{1,R}+ \tdd_{1,L} \dfrac{\Me_L - \nu \M(\Me_L)}{1 + \nu \M(\Me_L)}.
 $$
 \item Velocities:
 $$
 u_1^- = u_2^* + a_1 \tdd_{1,L} \M(\Me_L) \dfrac{ 1 - \Me_L}{1 -\M(\Me_L)}, \quad u_1^+ = u_{1,R*} =  u_1^*.
 $$
 \item Relaxation pressures $\pi_1(\tau_1,\T_1,s_1)$:
 $$
 \pi_1^- = p_{1,L}+a_1^2(\tau_{1,L}-\tau_1^-),\quad \pi_1^+ = p_{1,L}+a_1^2(\tau_{1,L}-\tau_1^+), \quad \pi_{1,R*}=p_{1,R}+a_1^2(\tau_{1,R}-\tau_{1,R*} ).
 $$
%  \item Relaxation internal energies $e_1(\T_1,s_1)$:
%  $
%  e_{1}^-=e_1^+=e_{1,L}, \ e_{1,R*} = e_{1,R}.
%  $
\item Relaxation total energies $\E_1(u_1,\tau_1,\T_1,s_1)$:
$$
\begin{aligned}
&\E_1^-=(u_1^-)^2/2+e_{1,L}+((\pi_1^-)^2-p_{1,L}^2)/(2a_1^2), \\
&\E_1^+=(u_1^+)^2/2+e_{1,L}+((\pi_1^+)^2-p_{1,L}^2)/(2a_1^2), \\
&\E_{1,R*}=(u_{1,R*})^2/2+e_{1,R}+(\pi_{1,R*}^2-p_{1,R}^2)/(2a_1^2). 
\end{aligned}
$$.
 \end{itemize}

\item  The intermediate states for phase 2 are then given by 
\begin{itemize}
 \item Specific volumes:
 $
 \tau_{2,L*} = \tau_{2,L} +\dfrac{1}{a_2}(u_2^*-u_{2,L}), \qquad \tau_{2,R*} = \tau_{2,R} -\dfrac{1}{a_2}(u_2^*-u_{2,R}).
 $
 \item Velocities:
 $
 u_{2,L*}=u_{2,R*}=u_2^*.
 $
 \item Relaxation pressures $\pi_2(\tau_2,\T_2,s_2)$:
 $$
 \pi_{2,L*}=p_{2,R}+a_2^2(\tau_{2,L}-\tau_{2,L*} ), \qquad \pi_{2,R*}=p_{2,R}+a_2^2(\tau_{2,R}-\tau_{2,R*} ).
 $$
%  \item Relaxation internal energies $e_2(\T_2,s_2)$:
%  $
%  e_{2,L*}=e_{2,L}, \ e_{2,R*} = e_{2,R}.
%  $
 \item Relaxation total energies $\E_2(u_2,\tau_2,\T_2,s_2)$:
$$
\begin{aligned}
&\E_{2,L*}=(u_{2}^*)^2/2+e_{2,L}+(\pi_{2,L*}^2-p_{2,L}^2)/(2a_2^2),\\
&\E_{2,R*}=(u_{2}^*)^2/2+e_{2,R}+(\pi_{2,R*}^2-p_{2,R}^2)/(2a_2^2).
\end{aligned}
$$.
\end{itemize}
\end{enumerate}

% \smallskip
% 
% \begin{rem}
% In \cite{CHSS}, a kinetic relation is designed in order to define the Mach number $\M$ that parametrizes the phase 1 solution. It consists in imposing the lower-bound $\mu \tdd_{1,R}$ on the specific volume $\tau_{1,R*}$. If $\M(\Me_L) =\M_0 \left (\frac{1-\Me_L}{1+\Me_L}\right)$ is such that this lower-bound is satisfied, then the chosen solution is the unique   energy-preserving solution. Otherwise, maintaining the lower-bound $\mu \tdd_{1,R}$ for  $\tau_{1,R*}$ requires an energy dissipation which is ensured by taking $\M(\Me_L) =\M_\mu(\Me_L)$ (see \cite{CHSS} for more details).
% \end{rem}

\subsubsection*{Solution with the wave ordering $\bf u_2^* > u_1^*$:}
The solution $\xi \mapsto \tW(\xi;\U_L,\U_R;a_1,a_2)$ has the wave ordering $u_2^* > u_1^*$ if the following assumption holds:
\begin{equation*}
\Bd \qquad -a_1\tdd_{1,R} < \Udd < 0.
\end{equation*}
% \begin{center}
% \begin{tikzpicture}[scale=2]
% \small
% \tikzstyle{axes}=[thin,>=latex]
% \begin{scope}[axes]
% \draw (-1.5,0)   node {
% 	\begin{tikzpicture}[scale=2]
% \small
% \tikzstyle{axes}=[thin,>=latex]
% \begin{scope}[axes]
%         \draw[->] (-1,0)--(1,0) node[right=3pt] {$x$};
%         \draw[->] (0,0)--(0,1.25) node[left=3pt] {$t$};
% 	\draw [very thick,color=red] (0,0) -- (35:1cm) node[black,above]{$u_{1,R}+a_1\tau_{1,R}$};
%         \draw [very thick,color=red,dashed] (0,0) -- (70:1cm) node[black,above]{$u_2^*$};
% 	\draw [very thick,color=red] (0,0) -- (85:1cm) node[black,above]{$u_1^*$};
% 	\draw [very thick,color=red] (0,0) -- (145:1cm) node[black,above]{$u_{1,L}-a_1\tau_{1,L}$};
% %
% 	
% %
% \end{scope}
% \normalsize
% \end{tikzpicture}
% };
% \draw (1.7,0)   node {
% 	\begin{tikzpicture}[scale=2]
% \small
% \tikzstyle{axes}=[thin,>=latex]
% \begin{scope}[axes]
%         \draw[->] (-1,0)--(1,0) node[right=3pt] {$x$};
%         \draw[->] (0,0)--(0,1.25) node[left=3pt] {$t$};
% 	\draw [very thick,color=red] (0,0) -- (38:1cm) node[black,above]{$u_{2,R}+a_2\tau_{2,R}$};
% 	\draw [very thick,color=red,dashed] (0,0) -- (70:1cm) node[black,above]{$u_2^*$};
% 	\draw [very thick,color=red] (0,0) -- (145:1cm) node[black,above]{$u_{2,L}-a_2\tau_{2,L}$};
% %
% 	
% \end{scope}
% \normalsize
% \end{tikzpicture}
% };
% \end{scope}
% \normalsize
% \end{tikzpicture}
% \end{center}
For the determination of the wave velocities and the intermediate states, the simplest thing to do is to exploit the Galilean invariance of the equations. In this case indeed, the solution is obtained by the transformation
\begin{equation}
 \tW(\xi;\U_L,\U_R;a_1,a_2) := \mcal{V}\tW(-\xi;\mcal{V}\U_R,\mcal{V}\U_L;a_1,a_2),
\end{equation}
where the operator $\mcal{V}$ changes the velocities into their opposite values:
\begin{equation}
\mcal{V}: (x_1,x_2,x_3,x_4,x_5,x_6,x_7,x_8,x_9) \mapsto (x_1,x_2,x_3,-x_4,-x_5,x_6,x_7,x_8,x_9).
\end{equation}
Of course, the function $\xi\mapsto \tW(\xi;\mcal{V}\U_R,\mcal{V}\U_L;a_1,a_2)$ is computed through the first case, since for these new initial data $(\mcal{V}\U_R,\mcal{V}\U_L)$, it is condition $\Bu$ that holds.\\

\subsubsection*{Solution with the wave ordering $\bf u_2^* = u_1^*$:}
The solution $\xi \mapsto \tW(\xi;\U_L,\U_R;a_1,a_2)$ has the wave ordering $u_2^* = u_1^*$ if the following assumption holds:
\begin{equation*}
\Bt \qquad  \Udd = 0. \qquad \qquad \quad
\end{equation*}

\begin{figure}[ht!]
\begin{center}
\includegraphics[width=15cm,height=4cm]{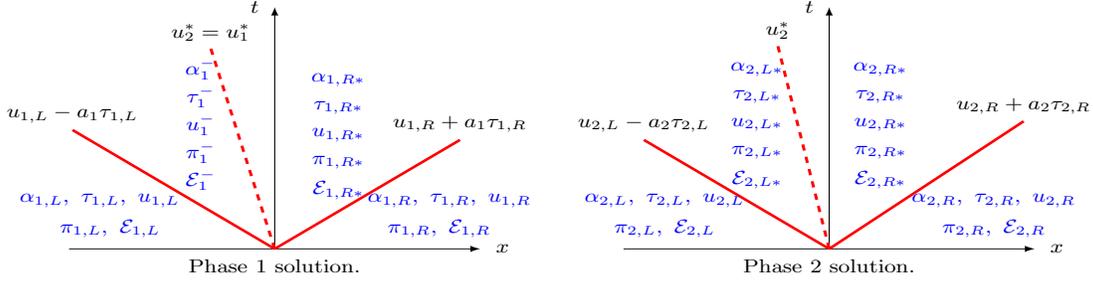}
\protect \parbox[t]{13cm}{\caption{Intermediate states of the exact solution of the Riemann problem \eqref{BNrelax_entrop_conv}-\eqref{relax_CI} with the wave ordering $ u_2^* = u_1^*$ .\label{FigSol2}}}
\end{center}
\end{figure}

The kinematic velocities are given by $ u_2^* = u_1^* = \udd_1$.
The intermediate states for phase 2 are obtained by the same formulae as in the case $u_2^* < u_1^*$, while the intermediate states for phase 1 (see Figure \ref{FigSol2}) read
\[
\begin{aligned}
&\alpha_1^- = \alpha_{1,L}, 					&\alpha&_{1,R*} = \alpha_{1,R},\\
&\tau_1^- = \tdd_{1,L} , 					&\tau&_{1,R*} = \tdd_{1,R},\\
&u_1^- = \udd_1,  						&u&_{1,R*} = \udd_1, \\
&\pi_1^- = \ p_{1,L}+a_1^2(\tau_{1,L}-\tdd_{1,L}), 		&\pi&_{1,R*} = \ p_{1,R}+a_1^2(\tau_{1,R}-\tdd_{1,R*}),\\
&\E_1^- = (u_1^-)^2/2+e_{1,L}+((\pi_1^-)^2-p_{1,L}^2)/(2a_1^2), &\E&_{1,R*}=(u_{1,R*})^2/2+e_{1,R}+(\pi_{1,R*}^2-p_{1,R}^2)/(2a_1^2). 
\end{aligned}
\]

\subsubsection*{The non-conservative product $\bf\textbf{d}(\vect{W})\dv_x \vect{W}$:} 
\label{chap3nonconv}
When $\alpha_{1,L} \neq \alpha_{1,R}$, the non-conservative product $\textbf{d}(\vect{W})\dv_x \vect{W}$ identifies with a Dirac measure propagating at the constant velocity $u_2^*$. This Dirac measure is given by
\begin{equation*}
 \textbf{D}^*(\vect{W}_L,\vect{W}_R)\delta_{x-u_2^*t} ,
\end{equation*}
where 
$\textbf{D}^*(\vect{W}_L,\vect{W}_R):=(\alpha_{1,R}-\alpha_{1,L})\left ( u_2^*, 0,0, - \pi_1^*, + \pi_1^*, 0,0,0,0 \right )^T$. The pressure $\pi_1^*$ is defined for $\alpha_{1,R} \neq \alpha_{1,L}$ by
\begin{equation*}
 \pi_1^* :=  \pidd_2 - a_2\frac{\alpha_{2,R}+\alpha_{2,L}}{\alpha_{1,R}-\alpha_{1,L}}(u_2^*-\udd_2).
\end{equation*}

\subsection{Practical implementation of the relaxation finite volume scheme}
\label{choixa1a2}
In this appendix, we describe in detail the practical implementation of the scheme. We recall the space and time discretization:  we assume a positive space step  $\Delta x$ and the time step $\Delta t$ is dynamically updated through the CFL condition. The space is partitioned into cells $\R=\bigcup_{j\in \Z} [x_{j-\frac{1}{2}},x_{j+\frac{1}{2}}[$ with $x_{j+\frac{1}{2}}=(j+\frac 12) \Delta x$ for all $j$ in $\Z$. The centers of the cells are denoted $x_{j}=j \Delta x$ for all $j$ in $\Z$. We also introduce the discrete intermediate times $t^{n}=n\Delta t, \ n\in \xN$.

\medskip

The solution of the Cauchy problem:
\begin{equation*}
\left \lbrace
          \begin{array}{ll}
		\dv_t \U + \dv_x {\bf \Fcal}(\U) + {\bf \Ccal}(\U)\dv_x \U = 0, & x \in \R, t>0, \\
		 \U(x,0) =\U_0(x), &  x \in \R,
	   \end{array}
\right.
\end{equation*}
is approximated at time $t^n$ by $\U_j^n$ on the cell $[x_{j-\frac{1}{2}},x_{j+\frac{1}{2}}[$. The values of the approximate solution are inductively computed as follows:

\medskip
\noindent \textit{Initialization}:
\[
 \U_j^0 = \frac{1}{\Delta x} \int_{x_{j-\frac 12}}^{x_{j+\frac 12}} \U_0(x)\dx.
\]

\medskip
\noindent \textit{Time evolution}:
\begin{equation}
\label{FVS}
\U_{j}^{n+1} = \U_{j}^{n} - \dfrac{\Delta t}{\Delta x}
\left (\mathcal{F}^{-}(\U_{j}^{n},\U_{j+1}^{n}) -
\mathcal{F}^{+}(\U_{j-1}^{n},\U_{j}^{n})  \right).
\end{equation}

\medskip
At each cell interface $x_{j+\frac 12}$, the numerical fluxes $\mathcal{F}^{\pm}(\U_{j}^{n},\U_{j+1}^{n})$ are computed thanks to the relaxation approximate Riemann solver. They depend on the states $(\U_{j}^{n},\U_{j+1}^{n})$ but also on the local values of the relaxation parameters $a_{k,j+\frac 12}^n,\,k=1,2$. Denoting $\U_L=\U_{j}^n$ and $\U_R=\U_{j+1}^n$ and $a_k,\,k=1,2$ for simplicity, the fluxes  $\mathcal{F}^{\pm}(\U_L,\U_R)$ are computed through the following steps.

\begin{enumerate}
 \item  \textit{Local choice of the pair $(a_1,a_2)$.}
 The pair of parameters $(a_1,a_2)$, must be chosen large enough so as to satisfy several requirements:
\begin{itemize}
 \item In order to ensure the stability of the relaxation approximation, $a_k$ must satisfy Whitham's condition (\ref{whithambis}). For simplicity however, we do not impose Whitham's condition everywhere in the solution of the Riemann problem (\ref{BNrelax_entrop_conv})-(\ref{relax_CI}) (which is possible however), but only for the left and right initial data at each interface:
 \begin{equation}
  \text{for $k$ in $\lbrace 1,2 \rbrace$}, \quad a_k > \max \left (\rho_{k,L}\,c_{k}(\rho_{k,L},e_{k,L}),\rho_{k,R}\,c_{k}(\rho_{k,L},e_{k,L}) \right ),
 \end{equation}
 where $c_k(\rho_k,e_k)$ is the speed of sound in phase $k$. In practice, no instabilities were observed during the numerical simulations due to this simpler Whitham-like condition.
 
\item In order to compute the solution of the relaxation Riemann problem, the specific volumes $\tdd_{k,L}(\U_L,\U_R;a_k)$ and $\tdd_{k,R}(\U_L,\U_R;a_k)$ defined in \eqref{diese1} must be positive. The expressions of $\tdd_{k,L}(\U_L,\U_R;a_k)$ and $\tdd_{k,R}(\U_L,\U_R;a_k)$
% \begin{eqnarray*}
% \tdd_{k,L}(\U_L,\U_R;a_k) &=&  \tau_{k,L} +\dfrac{1}{2a_k}(u_{k,R} - u_{k,L}) - \dfrac{1}{2a_k^2} \left
% (p_{k,R}-p_{k,L} \right ), \\
% \tdd_{k,R}(\U_L,\U_R;a_k) &=& \tau_{k,R} +\dfrac{1}{2a_k}(u_{k,R} - u_{k,L}) + \dfrac{1}{2a_k^2} \left (
% p_{k,R}-p_{k,L} \right ). 
% \end{eqnarray*}
are two second order polynomials in $a_k^{-1}$ whose constant terms are respectively $\tau_{k,L}$ and $\tau_{k,R}$. Hence, by taking $a_k$ large enough, one can guarantee that $\tdd_{k,L} (\U_L,\U_R;a_k)>0$ and $\tdd_{k,R}(\U_L,\U_R;a_k) >0$, since the initial specific volumes $\tau_{k,L}$ and $\tau_{k,R}$ are positive. 

\item Finally, in order for the relaxation Riemann problem  (\ref{BNrelax_entrop_conv})-(\ref{relax_CI}) to have a positive solution, $(a_1,a_2)$ must be chosen so as to meet condition $\B$ of Theorem \ref{relax_thm} as well as the positivity condition of the phase 2 densities $\C$ (see the comments after Theorem \ref{relax_thm}).
\end{itemize}
Thereafter, we propose an iterative algorithm for the computation of the parameters $(a_1,a_2)$ at each interface.
%\texttt{Fixedpoint}$(a_{1},a_2)$ is a subroutine that computes a numerical approximation of the solution $u_2^*$ of the fixed-point problem (\ref{chap3mel}), using some numerical method such as Newton's method or a dichotomy algorithm. 
The notation $\texttt{not} (\textbf{P})$ is the negation of the logical statement $\textbf{P}$.

\medskip
\hspace{1.5cm}
\vrule
\begin{minipage}{20cm}
 \begin{itemize} \itemsep2pt \parskip0pt \parsep0pt
\item Choose $\eta$ a (small) parameter in the interval $(0,1)$.
 \item For $k$ in $\lbrace 1,2 \rbrace$ initialize $a_{k}$:
\begin{description}
\item \qquad $a_{k} := (1+\eta) \max \left( \rho_{k,L}\, c_{k}(\rho_{k,L},e_{k,L}), \rho_{k,R}\, c_{k}(\rho_{k,R},e_{k,R})  \right )$.
\end{description}
\item For $k$ in $\lbrace 1,2 \rbrace$:\\
do $\lbrace a_{k} := (1+\eta)a_{k} \rbrace$  while $ \big ( \tdd_{k,L}(\U_L,\U_R;a_k) \leq 0$ or $\tdd_{k,R}(\U_L,\U_R;a_k) \leq 0 \big )$.
\item do $\lbrace$  $\, a_{2} := (1+\eta)a_{2},$
\begin{description}
 \item  \qquad do $\lbrace a_{1} := (1+\eta)a_{1} \rbrace $ while \big (not$\textbf{\B}$\big),
 \item  \qquad compute the value of $u_2^*$ in the solution $\tW(\U_L,\U_R;a_1,a_2)$,
\end{description}
\textcolor{white}{do} $\rbrace$ while \big (not$\C$\big).
\end{itemize}
\end{minipage}
\\

\medskip
In this algorithm, the computation of $u_2^*$ requires the computation of the solution of the fixed-point problem (\ref{chap3mel}), using some numerical method such as Newton's method or a dichotomy (bisection) algorithm. It is possible to prove that this algorithm always converges in the sense that there is no infinite looping due to the while-conditions. Indeed, it is easy to observe that assumptions $\B$ and $\C$ are always satisfied if the parameters $(a_1,a_2)$ are taken large enough. Moreover, this algorithm provides reasonable values of $a_1$ and $a_2$, since in all the numerical simulations, the time step obtained through the CFL condition (\ref{chap2cfl}) remains reasonably large and does not go to zero. In fact, the obtained values of $a_1$ and $a_2$ are quite satisfying since the relaxation scheme compares very favorably with Rusanov's scheme, in terms of CPU-time performances (see Section \ref{numtest}).

\item \textit{Calculation of the numerical fluxes.} Once the relaxation parameters are known, one may give the expressions of the numerical fluxes $\mathcal{F}^{\pm}(\U_L,\U_R)$. Observe that, as a by-product of the above algorithm for the computation of $(a_1,a_2)$, the propagation velocity $u_2^*$ is already known, and one does not need to redo the fixed-point procedure. Given the solution $\xi \mapsto \tW(\xi;\U_L,\U_R;a_1,a_2)$ of the relaxation Riemann problem (\ref{BNrelax_entrop_conv})-(\ref{relax_CI}) (see appendix \ref{constr_sol} for the expression of the intermediate states), which we denote $\tW(\xi)$ for the sake of simplicity, the numerical fluxes are computed as follows:
\[
 \mathcal{F}^{\pm}(\U_L,\U_R)= 
 \left [
 \begin{matrix}
 0 \\[2ex]
 (\alpha_1 \rho_1 u_1)\big ( \tW(0^{\pm} )\big) \\[2ex]
 (\alpha_2 \rho_2 u_2) \big ( \tW(0^{\pm} )\big)\\[2ex]
 (\alpha_1 \rho_1 u_1^2 + \alpha_1 \pi_1) \big ( \tW(0^{\pm} )\big)\\[2ex]
 (\alpha_2 \rho_2 u_2^2 + \alpha_2 \pi_2) \big ( \tW(0^{\pm} )\big)\\[2ex]
 (\alpha_1 \rho_1 \E_1 u_1 + \alpha_1 \pi_1 u_1) \big ( \tW(0^{+} )\big)\\[2ex]
 (\alpha_2 \rho_2 \E_2 u_2 + \alpha_2 \pi_2 u_2) \big ( \tW(0^{+} )\big)
 \end{matrix}
 \right ]
 +
 \left [
 \begin{matrix}
  (u_2^*)^{\pm}\\[2ex]
  0\\[2ex]
  0\\[2ex]
  -\dfrac{(u_2^*)^\pm}{u_2^*}\pi_1^* \\[2ex]
  \dfrac{(u_2^*)^\pm}{u_2^*}\pi_1^* \\[2ex]
  -(u_2^*)^{\pm}\pi_1^* \\[2ex]
  (u_2^*)^{\pm}\pi_1^*
 \end{matrix}
 \right ] (\alpha_{1,R}-\alpha_{1,L}),
\]
where $u_2^*$ is already known as a result of the first step (choice of the pair $(a_1,a_2)$) and the expression of $\pi_1^*$ is given at the end of appendix \ref{constr_sol}. In the above expression of the numerical fluxes, we have denoted $(u_2^*)^+=\max(u_2^*,0)$, $(u_2^*)^-=\min(u_2^*,0)$ and the functions $x\mapsto \frac{(x)^\pm}{x}$ are extended by $0$ at $x=0$.

\end{enumerate}

Finally, the time step is computed so as to satisfy the CFL condition:
\[
\frac{\Delta t}{\Delta x} \ \underset{k\in\lbrace 1,2\rbrace, j \in \Z} {\max} \max \left \lbrace |
(u_{k}-a_{k}\tau_{k})^n_j|,|
(u_{k}+a_{k}\tau_{k})^n_{j+1}|\right \rbrace < \frac{1}{2},
\]
and the scheme \eqref{FVS} can be now applied to update the values of the unknown $\U_j^{n+1}$ for $j\in\Z$.

%\subsection{Practical choice of the pair $(a_1,a_2)$}
%

%\newpage

\bigskip
\textbf{Acknowledgements.} The authors would like to thank Nicolas Seguin who is a co-author of \cite{CHSS} upon which the present paper is based. The authors would like to thank him for his thorough reading of this paper which has constantly led to improving the text. 
The authors are grateful to Eleuterio Toro and Svetlana Tokareva who kindly agreed to pass the test-cases of the present paper with their HLLC scheme and to share the results. The authors would also like to warmly thank Donald Schwendeman and Michael Hennessey who also agreed to pass the test-cases with the Schwendeman-Wahle-Kapila's Godunov-type scheme and to share the results. In both cases, their remarkable reactivity has contributed to improving this paper. Finally, the authors would like to thank the reviewers for their constructive remarks.
This work has been partially funded by ANRT and EDF through an EDF-CIFRE contract 529/2009.

\small
\bibliographystyle{plain}
\bibliography{bibfile}
\normalsize

\end{document}